\numberwithin{equation}{section}
\newcommand{\tc}{\, | \,}
\newcommand\wt{\widetilde}
\newcommand\bbF {{\mathbb F}}
\newcommand\bbG {{\mathbb G}}
\newcommand\cL{\mathcal L}
\newcommand\cC{\mathcal C}
\newcommand\Var{{\rm {Var}}}
\newcommand\cE{\mathcal E}
\newcommand\tmix{{\rm T_{mix}}}
\newcommand\bbN {{\mathbb N}}
\newcommand\bbR {{\mathbb R}}
\newcommand\bbZ {{\mathbb Z}}
\renewcommand\O {\Omega}
\newcommand\bbP{{\mathbb P}}
\newcommand\gap {{\rm gap}}
\newcommand\RR {{\mathbb R}}
\newcommand\ZZ {{\mathbb Z}}
\newtheorem{rem}{Remark}
\newtheorem{claim}{Claim}
\newtheorem{step}{Step}
\newtheorem{prop}{Proposition}
\newtheorem{theo}{Theorem}
\newtheorem{lemma}{Lemma}
\newtheorem{definition}{Definition}
\renewcommand{\L}{\Lambda}
\renewcommand{\l}{\lambda}
\renewcommand{\d}{\delta}
\begin{document}

\title[]{Mixing times of monotone surfaces and SOS interfaces: a mean curvature %motion 
approach}
\author[P.~Caputo]{Pietro Caputo}
\address{P. Caputo, Dipartimento di Matematica,
  Universit\`a Roma Tre, Largo S.\ Murialdo 1, 00146 Roma, Italia.
  e--mail: {\tt caputo@mat.uniroma3.it}}
\author[F.~Martinelli]{Fabio Martinelli} 
\address{F. Martinelli, Dipartimento di Matematica,
  Universit\`a Roma Tre, Largo S.\ Murialdo 1, 00146 Roma, Italia.
  e--mail: {\tt martin@mat.uniroma3.it}} 
\author[F.L.~Toninelli]{Fabio Lucio Toninelli} 
\address{F. L. Toninelli, CNRS and ENS Lyon, Laboratoire de Physique\\
  46 All\'ee d'Italie, 69364 Lyon, France.  e--mail: {\tt
    fabio-lucio.toninelli@ens-lyon.fr}}

\thanks{This work was
  supported by the European Research Council through the ``Advanced
  Grant'' PTRELSS 228032}

\begin{abstract}
  We consider stochastic spin-flip %natural Glauber 
  dynamics for: (i) monotone discrete
  surfaces in $\bbZ^3$ with planar boundary height and (ii) the
  one-dimensional discrete Solid-on-Solid (SOS) model confined to a box. In both cases
  we show almost optimal bounds $O(L^2\text{polylog}(L))$ for the mixing time of the
  chain, where $L$
  is the natural size of the system. %As it is well known (see e.g. \cite{??}), in the above
  %situations t
  The dynamics at a macroscopic scale should be described
  by a deterministic mean curvature motion such that each point of the
  surface feels a drift which tends to minimize the local surface
  tension \cite{Spo}. Inspired by this heuristics, our approach consists in
  bounding the dynamics with an auxiliary one which, with very high
  probability, follows quite closely the deterministic mean curvature
  evolution. Key technical ingredients are monotonicity, coupling and an
  argument due to D. Wilson \cite{Wilson} in the framework of lozenge
  tiling Markov Chains.  Our approach works equally well for both
  models despite the fact that their equilibrium maximal height fluctuations
  occur on very different scales (${\log L }$ for monotone surfaces and
  $\sqrt L$ for the SOS model).  Finally, combining techniques %borrowed
  from kinetically constrained spin systems \cite{CMRT} together with
  the above mixing time result, we prove an almost diffusive lower
  bound of order $1/L^{2}\text{polylog}(L)$ for the spectral gap of the SOS
  model with  horizontal size $L$ and unbounded heights.
  \\
  \\
  2000 \textit{Mathematics Subject Classification: 60K35, 82C20 }
  \\
  \textit{Keywords: Mixing time, Lozenge tilings, Solid-on-Solid
    model, Monotone surfaces, Glauber dynamics, Mean curvature
    motion.}
\end{abstract}

\maketitle

\thispagestyle{empty}
\tableofcontents
\section{Introduction}
Understanding the dynamical behavior of interfaces undergoing a
stochastic microscopic evolution and the emergence
of mean curvature motion on the macroscopic scale is a fundamental
problem in non-equilibrium statistical mechanics \cite{Spo,Funaki}.
Even the simpler question of rigorously establishing the correct time-scale for
the relaxation to equilibrium is in many cases a
challenge. Similar questions arise also in combinatorics and computer
science when the interface configurations can be put in correspondence
with the dimer coverings of a planar graph: the main focus there is to evaluate the running time of Markov
Chain algorithms which sample uniformly among such
combinatorial structures (cf. in particular \cite[Section 5]{Wilson} for background and
motivations in this direction). In this
paper we address this question for two natural and widely studied
models
and obtain essentially optimal bounds on the equilibration time.

The first classical example is the continuous-time single spin-flip dynamics of discrete monotone surfaces
with
fixed boundary, cf.\ for instance
\cite{LRS,Wilson}. Monotone surfaces can be visualized as a stack of unit
cubes centered around the vertices of $\bbZ^3$, which is decreasing in
both the $x$  and the $y$ direction (see Figure \ref{mon-surf}); see 
Section \ref{sec:3d} for the precise definition. 
\begin{figure}[h]
\centerline{
\includegraphics[scale=1]{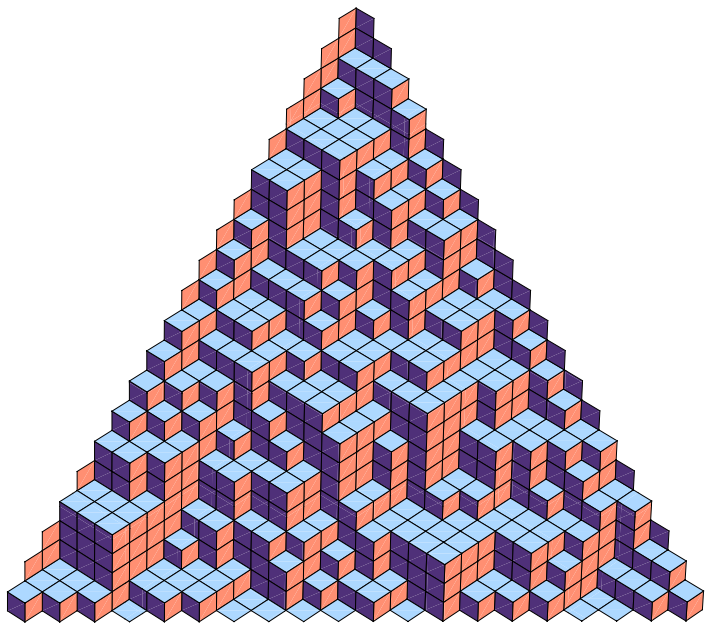}}
\caption{An example of a monotone surface with planar boundary
  conditions on the plane $x+y+z=0$. The picture is taken from
  \cite{Kenyon-lecturenotes}.}
\label{mon-surf}
\end{figure}
This is equivalent to the zero-temperature
dynamics of the three-dimensional Ising model with boundary conditions
that enforce the presence of an interface;  alternatively, it can be seen as a stochastic dynamics for 
lozenge tilings of a finite region of the plane, or of  dimer
coverings of a finite region of the honeycomb lattice. 
The invariant measure is uniform over all discrete surfaces compatible with the monotonicity 
constraints and with the boundary conditions.
The conjectured behavior of the 
mixing time $\tmix$ is of order $L^2\log L$ if $L$ is the linear size
of the region under consideration. The monotonicity
condition produces dynamical constraints which prevent the application
of standard tools to obtain non-trivial 
bounds on $\tmix$. To overcome this difficulty,  
a modified ``non-local'' version of the dynamics, whose moves involve adding or removing piles of unit cubes stacked one on top of the other, was introduced in 
\cite{LRS}. Using this device, a polynomial (in $L$) upper bound on $\tmix$ was
proven in \cite{LRS}. An important breakthrough was obtained by
D. Wilson in \cite{Wilson},
where the mixing time of the non-local dynamics was sharply analysed
and shown to be of order $L^2\log L$ (from below and above). Via
classical comparison arguments, this implies \cite{RandallTetali} that
$\tmix =\wt O(L^6)$ for the single-site dynamics - here and below we use the notation $\wt O(L^p)$ for any quantity that is bounded above by $L^p$ up to polylog($L$) factors. An improved
comparison, relying also on the so-called Peres-Winkler censoring
inequalities \cite{notePeres}, shows that $\tmix = \wt O( L^4)$; see \cite[Section 4.1]{CMST}.

The second example is the continuous-time stochastic one-dimensional 
SOS model, described by a set of integer-valued heights
$\eta_1,\ldots,\eta_L$ such that each $\eta_i$ is confined in an
interval whose size is of order $L$ (see Section \ref{sec:SOS} for the
precise definition and Figure \ref{fig:sos} for an illustration)
and the heights $\eta_0,\eta_{L+1}$ are fixed boundary conditions.
\begin{figure}[h]
\centerline{
\includegraphics[scale=1,width=10cm]{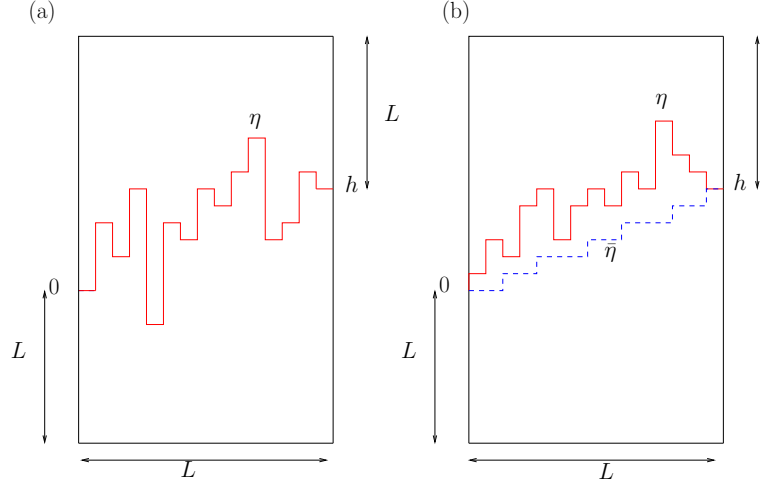}}
\caption{(a) A SOS configuration $\eta$ on the interval $[1,L]$, with boundary
  conditions $0$ and $h$ and heights confined between $-L$ and $L+h$
(proportions are distorted for graphical convenience). (b) A SOS
configuration 
constrained to lie above the ``wall'' $\bar \eta$, defined as $\bar\eta_i = \lfloor ih/(L+1)\rfloor$
(i.e.\ the integer approximation of the straight line joining the two boundary heights).}
\label{fig:sos}
\end{figure}
The
dynamics involves local moves where the discrete heights can
change by $\pm1$ at each step. 
The invariant measure is the Gibbs distribution corresponding to a potential given by the absolute value of the height
gradients. 
Due to the non-strictly convex character of
the interaction, even obtaining a diffusive spectral gap bound (of
order $L^{-2}$) for zero boundary conditions has been  a long-standing open problem. The analysis of an auxiliary
non-local dynamics, in the spirit of \cite{Wilson}, plus a judicious use
of the 
Peres-Winkler inequalities were recently combined to obtain a mixing time
upper bound $\wt O(L^{5/2})$ 
\cite{MS},
while again the conjectured behavior is $O(L^2\log L)$. 

The main contribution of the present paper is a proof that, for both
problems, $\tmix = \wt O(L^2)$; 
see Theorem \ref{th:3d} for monotone surfaces and Theorem \ref{th:sos} for SOS. In the monotone surface case, we must restrict our analysis  to the case where the boundary condition is \emph{approximately planar} (see e.g. Figure \ref{mon-surf}), so that on a
macroscopic scale, at equilibrium, the surface is flat, cf. Theorem \ref{th:eq3d}. At the microscopic scale this corresponds to the zero temperature limit of the 3D Ising model with so-called Dobrushin boundary conditions, i.e.\ the boundary spins take the value $\pm1$ according to whether they lie above or below a given plane. 
In any finite volume, this leads to the uniform distribution over all monotone surfaces
compatible with the given ``planar'' boundary conditions. In the
infinite volume limit 
$L\to\infty$, such uniform distribution is related to  a
\emph{translation invariant, ergodic Gibbs measure} on dimer
coverings of the infinite honeycomb lattice
\cite{KOS}, described by a 
determinantal point processes whose kernel is known explicitly. Our
method relies crucially on the Gaussian Free Field-like fluctuation
properties of such infinite volume state.
 
Theorem \ref{th:sos}, together with techniques
developed in the context of kinetically constrained spin models \cite{CMRT}, allows us to implement a recursive analysis
whose output is an almost diffusive lower bound $\gap\ge 1/L^{2}$ up to polylog($L$) factors for the spectral gap of the SOS model with \emph{unbounded} heights
and zero boundary conditions $\eta_0=\eta_{L+1}=0$; see Theorem \ref{th:gap}.

Our approach can be roughly described as follows. At equilibrium the interface is macroscopically flat,
with maximal height fluctuations much smaller than $L$ (logarithmic in the monotone
surface case and of order $\sqrt L$ for SOS). The main step in the proof of the mixing time upper bounds
is to show that an initial macroscopically non-flat profile approaches
the flat equilibrium profile within the correct time. Heuristically, the
interface evolves by minimizing the surface tension and
therefore feels a drift proportional to the local mean curvature.
A major difficulty encountered in previous approaches is the possible
appearance of very large (up to  order $L$) gradients of the surface
height. Our method consists in introducing an auxiliary ``mesoscopic''
dynamics which approximately follows the mean curvature
flow, such that large gradients are absent in the initial
condition and are very unlikely to be created at later times. The
key point is that, thanks to monotonicity and coupling considerations, the original
dynamics converges to equilibrium faster than the auxiliary one.

The study of the mesoscopic dynamics involves a local analysis of the relaxation of a
mesoscopic portion of the interface, whose size $\ell$ depends i) on the
current local mean curvature $1/R_t$ of the surface  at time $t$ and ii) on the size of
equilibrium height  fluctuations. For the monotone
surface case it turns out that one has to choose $ \ell\simeq
R_t^{1/2}$ and for the SOS model $\ell\simeq
R_t^{2/3}$
(in both cases modulo polylogarithmic factors). 
We refer to Section
\ref{sec:scale} for a more detailed explanation of the strategy which
leads to the choice of the different
scales in the two models. Note that, since the
evolution tends to a profile with vanishing mean curvature, $R_t$ grows
with time and becomes much larger than the initial value $R_0\simeq L$
when equilibrium is approached.
One key input is to prove that the equilibration time within such
mesoscopic regions is of the correct order $\wt O(\ell^2)$:
this result can be obtained following recent important progress in \cite{CMST} for monotone
surfaces and in \cite{MS} for SOS (both use the Peres-Winkler
inequalities and Wilson's argument \cite{Wilson} in an essential way).
The other key point, which is the main contribution of this work, is to
show that the mesoscopic dynamics is
dominated with high probability by
the deterministic mean curvature evolution of a macroscopic smooth
profile with the appropriate boundary conditions.

In this paper we do not focus on mixing time \emph{lower bounds}. Let us
however mention that, for the monotone surface dynamics, under natural
assumptions mentioned in Remark \ref{rem:natural} below, one can apply
\cite[Theorem 3.1]{CMST} to obtain 
a lower bound of order $L^2/\log L$ for the mixing time.
Similarly, for the SOS model it is known that the spectral gap is at
most of order $L^{-2}$ \cite{MS}, which by standard inequalities 
%due to the mentioned
%inequality $\gap\ge 1/\tmix$ 
directly implies a lower bound of order
$L^2$ for $\tmix$.

We believe our method could potentially work for a wide range of
stochastic interface dynamics models where mean curvature motion is
expected to occur macroscopically.
An example that comes naturally to mind is the dynamics of domino
tilings of the plane, for which at present only non-optimal polynomial
upper bounds on $\tmix$ are available \cite{LRS}.

A real challenge is on the other hand to prove that $\tmix = \wt O( L^2)$ for the
monotone surface dynamics when the boundary height is \emph{not}
approximately planar, in which case the equilibrium shape is not
macroscopically flat and arctic circle-type phenomena can occur \cite{CLP}. While in principle our idea of mesoscopic
auxiliary dynamics could be adapted to this case too, what is missing
here are precise, finite-$L$ equilibrium estimates on height fluctuations and on
the rate of convergence of the equilibrium average height to its macroscopic limit.

Concerning the SOS model, a big challenge is the analysis of  the
dynamics for the model in dimension $2+1$, for which not even crude
polynomial bounds on $\tmix$ are available, while on general grounds
one can expect\footnote{This model is actually
quite tricky and can hide surprises: one can for instance show \cite{CLST} that, if one adds
a hard-wall floor constraint at height zero, the boundary
height being also fixed to zero, the dynamics is slowed down by the
presence of a bottleneck which causes the relaxation time to be
exponentially large in $L$. This phenomenon, related to the so-called entropic repulsion,  should not be present in
absence of the wall.}  once more the ``diffusive'' scaling $L^2\log L$.

\subsection{Generalities and notation}
Let us recall some standard  definitions for continuous
time reversible Markov chains (see e.g. \cite{Peres}). We will mostly work in the case
where the state space $\Omega$ is finite and  the Markov chain is irreducible. In particular,
there is a unique reversible invariant measure
$\pi$. For $\xi\in\Omega$ and $t\ge0$, 
%$\sigma^\xi_t$ denotes the configuration at time $t$ started from the
$\mu^\xi_t$ denotes the law of the configuration at time $t$ started from the
initial configuration $\xi$. % The law of $\sigma^\xi_t$
% is denoted $\mu^\xi_t$ and 
The law of the chain is denoted by  
$\bbP$.

Given two laws $\mu,\nu$ on $\Omega$, we let $\|\mu-\nu\|=\sup_{A\subset\Omega}|\mu(A)-\nu(A)|$ denote
their total variation distance.
The {\em mixing time} $\tmix$, defined as 
\begin{eqnarray}
  \label{eq:3}
  \tmix=\inf\big\{t>0: \sup_{\xi\in\Omega}\|\mu^\xi_t-\pi\|\le (2e)^{-1}\big\},
\end{eqnarray}
measures the time it takes for the dynamics to be close in total variation
to equilibrium, uniformly in the initial condition.
It is well known that
\begin{eqnarray}
  \label{eq:10}
  \sup_{\xi\in\Omega}\|\mu^\xi_t-\pi\|\le e^{-\lfloor t/\tmix\rfloor},
\end{eqnarray}
i.e., the worst-case variation distance from equilibrium
decays exponentially with rate $1/\tmix$.

\smallskip

If $\cL$ denotes the infinitesimal generator of the reversible Markov chain, the {\em spectral gap} 
is defined as the lowest nonzero eigenvalue of $-\cL$. Equivalently, if $\cE(f,g) = \pi[f(-\cL g)]$
denotes the associated Dirichlet form, one has
\begin{equation}\label{defgap}
\gap = \inf_f \frac{\cE(f,f)}{\Var(f)}\,,
\end{equation} 
where $\Var(f)$  stands for the variance $\pi[f^2]-\pi[f]^2$ and the infimum ranges over all functions $f:\Omega\mapsto\bbR$ such that $\Var(f)\neq 0$. This definition makes sense also in the case where $\O$ is countably infinite, as will be the case for the unbounded SOS model to be considered in Theorem \ref{th:gap} below.
\smallskip

Throughout the paper, we will adopt the following conventions:
\begin{enumerate}[(i)]
\item if $x,y\in \RR^n$, then $d(x,y)$ denotes their Euclidean
  distance;

\item if $x\in \RR^n$, then we write $x^{(a)},a=1,\ldots,n$ for its components;
\item if $U\subset \RR^n$, then $diam(U)=\max\{d(x,y), x,y\in U\}$
  denotes its diameter;

\item if $U\subset \ZZ^n$, then $\partial U=\{x\in
 \ZZ^n\setminus U\mbox{\;such that\;} \exists y\in U \mbox{\;with\;} d(x,y)=1\}$.
If on the other hand $U$ is a smooth subset of $\RR^n$, then $\partial
U$ denotes its usual boundary.
\end{enumerate}

\section{Monotone surfaces with ``planar'' boundary conditions}

\label{sec:3d}

\begin{definition}[Monotone surfaces]
%   A subset $V\subset \ZZ^3$ is a (discrete) monotone set if $z\in V$ implies
%   $y\in V$ whenever $y^{(a)}\le z^{(a)}, a=1,2,3$. The collection of
%   all monotone sets is denoted $\Sigma$.\\
 A function $\phi:\ZZ^2\mapsto (\ZZ\cup\{\pm\infty\})$
  defines a (discrete) monotone surface if $\phi_x\ge
  \phi_y$ whenever $x^{(a)}\le y^{(a)},a=1,2$.  The collection of
  all monotone surfaces is denoted by $\Omega$.
\end{definition}

On % $\Sigma$ (resp. on 
$\Omega$ there is a natural partial order: we say
that % $V_1\preceq V_2$ (resp. 
$\phi\le \phi'$ if % $V_1\subset
% V_2$ 
% (resp.
$\phi_x\le \phi'_x$ for every $x\in\ZZ^2$.
Analogously, for $U\subset \ZZ^2$ we write $\phi_U\le
\phi'_U$ if $\phi_x\le \phi'_x$ for every $x\in U$. 
% There is an obvious one-to-one correspondence between monotone sets
% and monotone surfaces: given $V\in \Sigma$ one defines $\phi\in\Omega$
% by
% \[
% \ZZ^2\ni x\mapsto \phi(x):=\sup\{n\in \ZZ:(x^{(1)},x^{(2)},n)\in V\}.
% \]
% \begin{definition}
% \label{def:vphi}
%   Given $\phi\in\Omega$ (resp. $V\in \Sigma$) we denote by $V_{[\phi]}$
%   (resp. $\phi_{[V]}$) the corresponding monotone set (resp. surface).
% \end{definition}

% Given a finite set $U\subset \ZZ^2$, let \[\partial U=\{x\in
% \ZZ^2\setminus U\mbox{\;such that\;} \exists y\in U \mbox{\;with\;} d(x,y)=1\}.\]
% The boundary conditions $\eta$ are an assignment
% \[
% \eta:\partial U\mapsto (\ZZ\cup\{\pm\infty\})
% \]
% with the constraint that there exists $\phi\in\Omega$ such that
% $\phi(x)=\eta(x)$ for
% $x\in \partial U$. In other words, $\eta$ must be the restriction to
% $ \partial U$ of some monotone surface $\phi_\eta$..

\subsection{Heat bath dynamics}
We define a dynamics $\{\phi^\xi(t)\}_{t\ge0}$ on monotone surfaces
with initial condition $\xi$ and  fixed boundary
conditions (b.c.) outside a finite region. Let $U$ be a finite connected subset of $\ZZ^2$ (the finite region) and 
$\eta\in \Omega$ (the boundary condition). Without loss of generality we will always assume that $U$ contains the origin.

Given $\phi\in \Omega$ and $x\in \ZZ^2$, let $\phi^{(x,+)},\phi^{(x,-)}\in\Omega$ be
  defined by
  \begin{eqnarray}
    \label{eq:1}
    \phi^{(x,+)}_y=\left\{
      \begin{array}{lll}
        \phi_y & if & x\ne y\\
        \min[\phi_x+1,\phi_{(x^{(1)}-1,x^{(2)})},\phi_{(x^{(1)},x^{(2)}-1)}] & if &x=y
      \end{array}
\right.
\end{eqnarray}
and
\begin{eqnarray}
\label{eq:phi-x}
    \phi^{(x,-)}_y=\left\{
      \begin{array}{lll}
        \phi_y & if & x\ne y\\
        \max[\phi_x-1,\phi_{(x^{(1)}+1,x^{(2)})},\phi_{(x^{(1)},x^{(2)}+1)}] & if &x=y.
      \end{array}
\right.
  \end{eqnarray}

 The dynamics 
is a continuous-time Markov chain on the set
\[
\Omega_{\eta,U}:=\{\phi\in\Omega:\phi_x=\eta_x \mbox{\;for\;}x\notin U\}.
\] 
The initial condition at time zero is some given
$\xi\in\Omega_{\eta,U}$.  To each $x\in U$ is assigned an
i.i.d. exponential clock of rate $1$. If the clock labeled $x$ rings
at time $t$, we replace $\phi(t)$ with $[\phi (t)]^{(x,+)}$ or
$[\phi (t)]^{(x,-)}$ with equal probabilities. It is immediate to check
that such Markov chain is irreducible and reversible with respect to the uniform
measure on $\Omega_{\eta,U}$, which we denote $\pi^{\eta}_U$ or simply $\pi$.
The mixing time is then defined as in \eqref{eq:3} where the supremum
is taken over $\xi\in \Omega_{\eta,U}$.

\subsection{Monotonicity}
\label{sec:monoton}
A function $f$ on
$\Omega$ is said to be increasing (resp. decreasing) if $f(\phi)\le
f(\phi')$ (resp. $f(\phi)\ge
f(\phi')$) whenever $\phi\le \phi'$. Given two laws $\mu,\nu$ on
$\Omega$, we write $\mu\preceq \nu$ ($\nu$ dominates stochastically
$\mu$) if $\mu(f)\le \nu(f)$ for every increasing function $f$.
The heath-bath dynamics is monotone (or attractive) with respect to
the partial ordering ``$\le$'', in the following sense. If $\mu^\xi_{t,\eta}$
denotes the law of $\phi^\xi_{\eta}(t)$, the dynamics at time $t$ started from $\xi$ and
evolving with b.c. $\eta$,  one has the following property (cf. for
instance the discussion in \cite[Sec. 2.1]{CMST}):
\[
\mu_{t,\eta}^\xi\preceq \mu_{t,\eta'}^{\xi'} \mbox{\;if\;}\xi\le \xi'\mbox{\;and\;}\eta\le\eta'.
\]
In particular, letting $t\to\infty$, one has $\pi_U^{\eta}\preceq \pi_U^{\eta'}$.
It is possible to realize on the same probability space the trajectories of the
Markov chain corresponding to distinct initial conditions $\xi$ and/or distinct boundary
conditions $\eta$ in such a way that, with probability one,
\[
\phi^\xi_{\eta}(t)\le \phi^{\xi'}_{\eta'}(t)\mbox{
\; for every\;} t \ge 0,\mbox{\; if\;}\xi\le \xi'\mbox{\;and\;}\eta\le \eta'.
\]
Such a construction takes the name of \emph{global monotone coupling}.
Throughout the paper we will apply several times the above monotonicity properties:
for brevity, we will simply say ``by monotonicity...''

\subsection{Mixing time upper bound}

As we mentioned in the introduction, it is expected that 
$\tmix=O(L^2\log L)$, where $L$ is the diameter of the region $U$. The next
result proves such conjecture, up to logarithmic corrections, under
the assumption that the boundary conditions are ``approximately
planar'' (cf. condition \eqref{eq:4} below). Such ``planar'' case is
rather natural in terms of the three-dimensional Ising model: indeed,
it corresponds to the zero-temperature limit of a system defined in
the cylinder $U\times \ZZ$, with Dobrushin-type boundary conditions which are, say,
``$+$'' above some plane and ``$-$'' below.

\begin{definition}
  \label{def:nu}
Given ${\bf n}\in\RR^3$ with $\|{\bf n}\|=1$, we write ${\bf n}>0$ if 
${\bf n}^{(i)}>0,i=1,2,3$.
We let $\bar \phi^{\bf n}\in\Omega$ be the discrete monotone
surface with slope ${\bf n}$:
\[
\ZZ^2\ni x\mapsto\bar \phi^{\bf n}_x=\max\{z\in \ZZ:x^{(1)}{\bf n}^{(1)}+x^{(2)}{\bf
  n}^{(2)}+z{\bf n}^{(3)}\le 0\}
\]
 and $\Pi^{\bf n}$ denotes the plane 
\begin{eqnarray}
  \label{eq:6}
  \Pi^{\bf n}=\{z\in\RR^3: {\bf n}\cdot z=0\}.
\end{eqnarray}
For $x\in \ZZ^2$ we let $\Pi^{\bf n}(x)$ denote the vertical coordinate of the
point in the plane $\Pi^{\bf n}$ with horizontal coordinates $(x^{(1)},x^{(2)})$.
For $h>0$ (resp. $h<0$), $\Pi^{\bf n}_h$ is the plane obtained
translating $\Pi^{\bf n}$ upwards
(resp. downwards) by $|h|$ along the vertical direction.

\end{definition}
The planarity condition on the boundary conditions is specified  as follows:
\begin{definition}\label{ass:eta}
   Let ${\bf n}>0$. We say that $\eta$ is a \emph{good planar boundary condition} with slope $\bf n$ if there exists $C>0$ such that 
  \begin{eqnarray}
    \label{eq:4}
|\eta_x-\bar\phi^{\bf n}_x|\le C\log (|x|+1)    
  \end{eqnarray}
for every $x\in \bbZ^2$.
\end{definition}
As we mentioned in the introduction, under such boundary conditions the surface at equilibrium is essentially flat:
\begin{theo}
\label{th:eq3d}
Let ${\bf n}>0$ and let $\eta$ be a good planar boundary condition with slope $\bf n$. For every $\epsilon>0$ there exists $c>0$ such that for every $a>0$ the following holds. Let $U$ be a finite, connected subset of $\bbZ^2$ containing the origin and let $L:=diam(U)$. Then, for any $L$ large enough,
 \begin{eqnarray}
    \label{eq:17}
    \pi^{\eta}_U\left(\exists y\in U:
|\phi_y-\bar \phi^{\bf n}_y|\ge a(\log L)^{1+\epsilon}\right)\le(1/c) e^{-c\,a(\log L)^{1+\epsilon}}.
  \end{eqnarray}
\end{theo}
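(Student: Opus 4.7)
The plan is to combine the global monotonicity properties of $\pi^\eta_U$ with Gaussian Free Field-type height fluctuation estimates for the uniform measure on monotone surfaces with slope $\bf n$. These estimates come from the explicit determinantal description, due to Kenyon and collaborators, of the translation-invariant ergodic Gibbs measure $\mu_\infty$ on dimer coverings of the honeycomb lattice \cite{KOS,Kenyon-lecturenotes}, to which the uniform measure on monotone surfaces with slope $\bf n$ is equivalent.

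First I would reduce the good planar boundary condition $\eta$ to the exactly planar reference surface $\bar\phi^{\bf n}$. By the hypothesis \eqref{eq:4} and since $diam(U)=L$ with $0\in U$, one has $|\eta_x-\bar\phi^{\bf n}_x|\le C_1\log L$ for every $x$ in a bounded neighborhood of $U$, with $C_1$ depending only on $C$ (values of $\eta$ far from $U$ can be modified without affecting the law on $U$, as long as the monotonicity constraint and the values near $\partial U$ are preserved). By the global monotone coupling of Section \ref{sec:monoton},
\[
\pi_U^{\bar\phi^{\bf n}-C_1\log L}\preceq \pi_U^\eta\preceq \pi_U^{\bar\phi^{\bf n}+C_1\log L},
\]
and the extremal measures are just vertical translates of $\pi_U^{\bar\phi^{\bf n}}$ by an additive constant. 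It therefore suffices to prove \eqref{eq:17} for $\eta=\bar\phi^{\bf n}$, at the cost of reducing the deviation threshold by $C_1\log L$, which is harmless in the non-trivial regime of $a$ after readjusting the constant $c$.

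For $\eta=\bar\phi^{\bf n}$ I would then compare $\pi_U^{\bar\phi^{\bf n}}$ with $\mu_\infty$. The key probabilistic input is the sub-Gaussian tail
\begin{equation}
\label{eq:plan-gff}
\mu_\infty\big(|\phi_y-\bar\phi^{\bf n}_y|\ge s\big)\le C_2\exp\!\big(-c_2 s^2/\log(|y|+2)\big),\quad s\ge 1,
\end{equation}
which follows from the $|x-y|^{-1}$ decay of the inverse Kasteleyn matrix entries (giving a $\log|y|$ variance for height differences, already well understood \cite{KOS}) combined with a concentration inequality for the determinantal point process defined by the Kasteleyn kernel. Conditioning $\mu_\infty$ on the event that $\phi|_{\partial U}$ lies within $K\log L$ of $\bar\phi^{\bf n}|_{\partial U}$ (an event of probability at least $1/2$ by \eqref{eq:plan-gff} and a union bound over the $O(L)$ sites of $\partial U$, provided $K$ is large enough) and applying monotonicity inside $U$ yields
\[
\pi_U^{\bar\phi^{\bf n}}\big(|\phi_y-\bar\phi^{\bf n}_y|\ge s+K\log L\big)\le 2\,\mu_\infty\big(|\phi_y-\bar\phi^{\bf n}_y|\ge s\big),\quad y\in U.
\]
A union bound over $y\in U$ (paying a factor $|U|\le L^2$) together with \eqref{eq:plan-gff} applied at $s\simeq a(\log L)^{1+\epsilon}$ gives a tail of order $\exp(-c_3 a^2(\log L)^{1+2\epsilon})$, which in the non-trivial regime of $a$ comfortably dominates the target $(1/c)\exp(-c\,a(\log L)^{1+\epsilon})$ since $(\log L)^{2\epsilon}$ absorbs the factor $L^2$ coming from the union bound.

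The main obstacle is the sub-Gaussian tail \eqref{eq:plan-gff}: the logarithmic variance is classical from the explicit form of the inverse Kasteleyn matrix, but upgrading variance to a sub-Gaussian tail requires concentration estimates for the determinantal process defined by that kernel, which one may obtain via Azuma-type inequalities for linear statistics of negatively associated determinantal variables, or by directly analyzing the Laplace transform of the height difference as a sum over lattice paths of dimer indicators. Once \eqref{eq:plan-gff} is in hand, the monotonicity reduction from $\eta$ to $\bar\phi^{\bf n}$ and the sandwich from $\mu_\infty$ to finite volume are essentially soft and only produce additive $O(\log L)$ corrections in the deviation threshold.
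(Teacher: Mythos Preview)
Your approach is the paper's: compare $\pi_U^\eta$ to the infinite-volume translation-invariant Gibbs measure via monotonicity and the DLR property, then invoke a height-fluctuation estimate for the latter. The paper packages the comparison slightly differently --- rather than first reducing to $\eta=\bar\phi^{\bf n}$ and then conditioning $\mu_\infty$ on a two-sided boundary event of probability $\ge 1/2$, it pins the Gibbs measure $\mu_{{\bf n},h,\bar x}$ at some $\bar x\in\partial U$ with $h$ chosen so that its mean sits $(a/2)(\log L)^{1+\epsilon}$ \emph{above} $\bar\phi^{\bf n}$, and conditions on the one-sided event $A=\{\phi_{\partial U}\ge\eta_{\partial U}\}$; the shift makes $A$ itself have probability $1-O(e^{-c a(\log L)^{1+\epsilon}})$, so the conditioning cost already matches the target tail and the good planar $\eta$ is handled in one stroke.

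The more useful remark is that your ``main obstacle'' is not one. You do not need the sub-Gaussian tail $\mu_\infty(|\phi_y-\bar\phi^{\bf n}_y|\ge s)\le C_2\exp(-c_2 s^2/\log(|y|+2))$; the paper uses only the weaker exponential bound
\[
\mu_{{\bf n},h,\bar x}\bigl(\exists y,\ d(\bar x,y)\le L:\ |\phi_y-\mu_{{\bf n},h,\bar x}(\phi_y)|\ge a(\log L)^{1+\epsilon}\bigr)\le(1/c)\,e^{-c\,a(\log L)^{1+\epsilon}},
\]
which is \cite[Proposition~5.7]{CMST}. Your argument runs equally well with this input: the factor $(\log L)^\epsilon$ in the threshold is precisely what absorbs the polynomial union-bound costs, so the quadratic $a$-dependence you would gain from a genuine sub-Gaussian tail is never needed.

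One small slip: your $\mu_\infty$ must be pinned at a point of $\partial U$, not at the origin (which lies in $U$); otherwise the DLR identity inside $U$ carries the extra constraint $\phi_0=\bar\phi^{\bf n}_0$ and no longer reduces to $\pi_U^{\phi|_{\partial U}}$.
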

We can finally formulate our mixing time upper bound:
\begin{theo}
\label{th:3d} In the same assumption of Theorem \ref{th:eq3d}, for $L$ sufficiently large one has
\begin{eqnarray}
  \label{eq:2}
  \tmix\le L^2(\log L)^{12}.
\end{eqnarray}
\end{theo}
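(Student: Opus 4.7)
The plan is to construct an auxiliary \emph{mesoscopic} dynamics $\wt\phi(t)$ that, by monotonicity and Peres--Winkler censoring, stochastically dominates the true single-flip chain started from the maximal initial condition, and to show that $\wt\phi(t)$ relaxes to the flat reference plane within time $L^2(\log L)^{12}$ via an analysis tied to mean curvature flow.

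\medskip

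\textbf{Reduction.} Let $\xi^\pm\in\Omega_{\eta,U}$ be the maximal and minimal elements. By the global monotone coupling of Section \ref{sec:monoton}, every trajectory, including one started from $\pi$, is sandwiched between those started from $\xi^\pm$. Hence it suffices to show that, with probability at least $1-1/(4e)$, both $\phi^{\xi^\pm}(T)$ lie within height $(\log L)^C$ of $\bar\phi^{\bf n}$ at time $T=L^2(\log L)^{12}$. Theorem \ref{th:eq3d} then provides the matching equilibrium fluctuation bound, and a union bound over the at most $L^2$ sites of $U$ closes the estimate $\|\mu_T^\xi-\pi\|\le (2e)^{-1}$ uniformly in $\xi$.

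\medskip

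\textbf{Mesoscopic dynamics and scale choice.} I would partition $U$ into (two-coloured) mesoscopic blocks of side $\ell=\ell_t$ and define $\wt\phi(t)$ as the process that, in rounds of length $T_\ell\simeq \ell^2(\log L)^{O(1)}$, fully equilibrates each block by running the single-flip chain restricted to it. Running the restricted chain only is equivalent to censoring the moves outside the block, which by Peres--Winkler preserves the domination of $\phi^{\xi^+}(t)$ by $\wt\phi(t)$. The block-internal mixing time $\wt O(\ell^2)$ is the CMST/Wilson input, which applies as long as the block's boundary values stay polylogarithmically close to planar. For a profile of height $h$ over a region of diameter $L$ the local radius of curvature is $R\simeq L^2/h$; by a discrete Laplace estimate the equilibrium block average lies below the tangent plane by $\simeq \ell^2/R$, while block fluctuations are only polylogarithmic. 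Choosing $\ell^2\simeq R(\log L)^c$ makes the mean-curvature drift dominate the fluctuations by a polylog factor, so one round decreases $h$ by $(\log L)^c$ in time $T_\ell\simeq R(\log L)^{O(1)}$. Integrating the resulting effective ODE $dh/dT\simeq -(\log L)^{c'} h/L^2$ from $h=L$ down to $h=(\log L)^C$ yields total time $O(L^2 (\log L)^{c''})$, matching \eqref{eq:2} for suitable exponents.

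\medskip

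\textbf{Main obstacle.} The hard part will be to make ``$\wt\phi$ tracks mean curvature'' quantitatively rigorous at finite $L$. I would build two smooth deterministic barriers $\Phi^-_t\le \Phi^+_t$, evolving under the mean curvature PDE from smooth envelopes of the initial profile, and prove inductively round-by-round that $\Phi^-_t-(\log L)^C\le \wt\phi(t)\le \Phi^+_t+(\log L)^C$ with overwhelming probability. The inductive step combines (i) a version of Theorem \ref{th:eq3d} with polynomial error probabilities applied to each block equilibration, with the Gaussian-free-field concentration provided by the determinantal KOS structure, (ii) a discrete maximum principle translating per-block local drifts into a uniform profile bound, and (iii) a union bound over the polynomially many block--round pairs. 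Smoothness of $\Phi^\pm_t$ rules out the large gradients that plagued previous approaches and justifies \emph{a posteriori} the applicability of the block mixing time input. The same construction applied to $\xi^-$ produces the lower barrier and concludes the proof.
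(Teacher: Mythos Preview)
Your overall strategy --- dominate the extremal evolution by a deterministic ``mean curvature'' barrier and take block sizes $\ell\simeq \sqrt{R}$ at the current curvature radius $R$ --- is exactly the mechanism the paper uses. However, there is a genuine gap in your \textbf{Reduction} paragraph, and your mesoscopic construction is somewhat more complicated than what the paper actually does.

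\medskip

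\textbf{The gap.} You claim that once both $\phi^{\xi^\pm}(T)$ lie within $(\log L)^C$ of $\bar\phi^{\bf n}$ with high probability, Theorem~\ref{th:eq3d} and a union bound over sites ``close the estimate $\|\mu_T^\xi-\pi\|\le(2e)^{-1}$''. This is not correct: the band $\{\phi:\max_x|\phi_x-\bar\phi_x^{\bf n}|\le(\log L)^C\}$ contains exponentially many configurations, and knowing that $\phi^{\xi^+}(T),\phi^{\xi^-}(T)$ and a $\pi$-typical configuration all lie in it says nothing about total variation distance. Under the monotone coupling the extremal trajectories need not have coalesced; the pointwise height difference can still be of order $(\log L)^C$ at every site, so the standard bound $\|\mu_T^{\xi^+}-\mu_T^{\xi^-}\|\le\sum_x\sum_j[\mu_T^{\xi^+}(\phi_x\ge j)-\mu_T^{\xi^-}(\phi_x\ge j)]$ only gives $O(L^2(\log L)^C)\gg1$. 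The paper closes this gap with two additional ingredients: Proposition~\ref{prop:quasisoff} (once within $(\log L)^{3/2}$ of the plane, the surface stays within $2(\log L)^{3/2}$ for time $L^{10}$) and Proposition~\ref{prop:wilson} (the dynamics with floor and ceiling at distance $H=O((\log L)^{3/2})$ has mixing time $O(L^2(\log L)^2H^2)=O(L^2(\log L)^5)$). One then runs for an extra $T$, couples to the floor/ceiling dynamics up to the first exit time, and uses that $T\gg L^2(\log L)^5$ to conclude closeness to the conditioned equilibrium, which by Theorem~\ref{th:eq3d} is close to $\pi$. Your proposal omits this entire step.

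\medskip

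\textbf{Comparison of the mesoscopic step.} Your proof of the analogue of Proposition~\ref{prop:maxmin} introduces a censored two-colour block dynamics $\wt\phi(t)$, appeals to Peres--Winkler to get $\mu_t^{\xi^+}\preceq\mathrm{law}(\wt\phi(t))$, and then tracks $\wt\phi$ against PDE barriers $\Phi_t^\pm$. The paper's argument (Proposition~\ref{lemma:duro}) is more direct and does not use censoring at all: it fixes a deterministic sequence of spherical caps $\mathcal C_{u_n}$ of heights $u_n=2L-n$ and, on the event that $\phi^\wedge(s)\le\psi_{u_n}$ for all $s\in[t_n,L^3]$, compares $\phi^\wedge$ at each site $x$ with an auxiliary local dynamics on a region $\tilde U$ of diameter $O(\sqrt{R_n}(\log L)^{3/4})$ with boundary condition $\wedge^{(n)}$ taken from the cap. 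Pure monotonicity (raising the boundary and the initial condition) gives the domination, and Propositions~\ref{prop:quasisoff} and~\ref{prop:wilson} applied to $\tilde U$ give the local mixing input. This sidesteps two complications in your scheme: you do not need to worry about the censoring schedule being deterministic while your block size $\ell_t$ adapts to the state, and you do not need to argue separately that ``full equilibration in time $T_\ell$'' is compatible with the censoring inequality (which controls laws, not exact block equilibria). Your ODE calibration $dh/dT\simeq -h(\log L)^{c'}/L^2$ and the resulting $t_M=\wt O(L^2)$ are, however, exactly right and match the paper's choice $t_{n+1}-t_n=R_{n+1}(\log L)^{17/2}$.
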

With some technical effort (but no need of new ideas) one can improve
the exponent $12$ to $6$ but we
will not do so, since neither is close to the conjectured optimal value $1$.
\begin{rem}
  \label{rem:natural}
Concerning lower bounds: 
using the same idea of the proof of the lower bound on the mixing time of the
three-dimensional zero-temperature Ising model with ``$+$'' boundary
conditions  in
\cite[Theorem 3.1]{CMST}, it is not hard to see that $\tmix\ge L^2/(c\log L)$
for a suitable $c>0$ for instance when $U= \mathcal U_L\cap \ZZ^2$, with
$\mathcal U_L$ a smooth open set of $\RR^2$ expanded by a factor $L$.
\end{rem}
\begin{rem}
Concerning the assumption ${\bf n}>0$ a first obvious observation is
that, using lattice symmetries, one could replace it with the
condition ${\bf n}^{(i)}\neq 0$, $i=1,2,3$. The main reason why we
excluded the case in which one or two components of $\bf n$ vanish is
that, in these cases,  the fluctuations of the surface in $U$ around
$\bar \phi^{\bf n}$ are \emph{
deterministically} upper bounded by $C'\log L$, where $C'$ depends on the constant $C$ in \eqref{eq:4}. As a consequence Theorem \ref{th:eq3d} becomes trivial and one can appeal to Proposition \ref{prop:wilson} below to get immediately Theorem \ref{th:3d}. Thus, the really interesting and non-trivial case is ${\bf n}>0$.
\end{rem}

\subsection{Dynamics with ``floor'' and ``ceiling''}
\label{sec:flocei} In the course of the proof of Theorem \ref{th:3d} we need an auxiliary restricted dynamics for an interface constrained between a floor and a 
ceiling.
Let $U$ and $\eta$ be as in the previous section; fix some
$\phi^+,\phi^-\in\Omega_{\eta,U}$ with $\phi^-\le \phi^+$ and let 
\begin{eqnarray}
  \label{eq:9}
  \Omega_{\eta,U}^{\phi^\pm}=\{\phi\in\Omega_{\eta,U}: \phi^-\le \phi\le \phi^+\}.
\end{eqnarray}

 One can
define a dynamics restricted to $\Omega_{\eta,U}^{\phi^\pm}$ simply by choosing an initial
condition $
\xi\in \Omega_{\eta,U}^{\phi^\pm}$ 
and redefining 
  \begin{eqnarray}
   \phi^{(x,+)}(y)=\left\{
      \begin{array}{lll}
        \phi(y) & if & x\ne y\\
        \min[\phi_x+1,\phi_{(x^{(1)}-1,x^{(2)})},\phi_{(x^{(1)},x^{(2)}-1)},\phi^+_x] & if &x=y
      \end{array}
\right.
\end{eqnarray}
and
\begin{eqnarray}
    \phi^{(x,-)}(y)=\left\{
      \begin{array}{lll}
        \phi(y) & if & x\ne y\\
        \max[\phi_x-1,\phi_{(x^{(1)}+1,x^{(2)})},\phi_{(x^{(1)},x^{(2)}+1)},\phi^-_x] & if &x=y
      \end{array}
\right.
  \end{eqnarray}
(compare with Eqs. \eqref{eq:1}, \eqref{eq:phi-x}). The dynamics is
again monotone in the sense of Section \ref{sec:monoton}, but this time
the invariant measure $\pi$  is the uniform measure on $\Omega_{\eta,U}^{\phi^\pm}$.

\section{%Mixing time and spectral gap bounds for the 
Solid-on-Solid model}
\label{sec:SOS}
We turn to the study of the mixing time and spectral gap of a
one-dimensional interface of Solid-on-Solid (SOS) type. 
The generic configuration (height function) of the standard SOS model is
$\eta=(\eta_1,\dots,\eta_L)\in\ZZ^L$ and its equilibrium measure $\wt\pi=\wt\pi_{L,h}$ corresponding to 
boundary conditions $0,h\in\bbZ$
is 
\begin{equation}
\label{eq:unbo}
\wt\pi_{L,h}(\eta) \propto \exp{\left(-%\beta
\sum_{i=0}^{L}|\eta_{i+1}-\eta_{i}|\right)}
\end{equation}
with $\eta_0=0$ and $\eta_{L+1}=h$.
%The measure $\pi_{L,h}$ describes the equilibrium 
%of a line interface of SOS type with left boundary condition $0$ and
%right boundary condition $h$
%(the most commonly studied case being $h=0$). 
There is no inverse temperature parameter $\beta$ in \eqref{eq:unbo}
since in this one-dimensional model its numerical value does not
affect the qualitative behavior of the system and there is no loss of
generality in fixing its value to unity.
It is well known that $\wt\pi_{L,h}$ describes the law of the unique open contour in
the two-dimensional Ising model in the box $\{1,\ldots,L\}\times\bbZ$ with Dobrushin boundary conditions (boundary
spins are ``$+$'' under the line which joins $(0,0)$ to $(L+1,h)$ and
``$-$'' below it), in
the limit where the couplings on vertical edges tend to infinity.

Since 
the mixing time \eqref{eq:3} deals with relaxation to equilibrium from
an arbitrary initial condition it is necessary to introduce the
following {\em bounded} version \cite{MS} of the SOS model,
enclosed in a rectangular box of sides of order $L$. Thanks to
standard equilibrium estimates, see also Lemma \ref{lem:eq} below, the
behavior at equilibrium of this bounded version of the model is
essentially the same as the usual unbounded one defined above. We come back to the unbounded model in Theorem \ref{th:gap}, which deals with the spectral gap.

For nonnegative integers $L$ and $h\leq L$, consider the configuration space $\O_{L,h}$ defined by
\begin{equation}
\label{OLh}
\O_{L,h} = \left\{\eta=(\eta_1,\dots,\eta_L)\,,\; \eta_i\in\bbZ\cap[-L,L+h]\,
\right\}\,.
\end{equation}
The equilibrium measure on $\O_{L,h}$ is then given by $\pi=\pi_{L,h}=\wt\pi(\cdot\tc\O_{L,h})$. 
%$\pi=\pi_{L,h}$, with 
%\begin{equation}\label{muLh}
%\pi_{L,h}(\eta) = \frac{\exp{(-%\beta
%\sum_{i=0}^{L}|\eta_{i+1}-\eta_{i}|)}}{Z_{L,h}}\,,\qquad
%Z_{L,h}=\sum_{\eta\in\O_{L,h}}\exp{\Big(-%\beta
%\sum_{i=0}^{L}|\eta_{i+1}-\eta_{i}|\Big)}\,,
%\end{equation}
%where %$\beta>0$, 
%$\eta_0=0$ and $\eta_{L+1}=h$.   Thus,  is just \eqref{eq:unbo} restricted to the set $\O_{L,h}$.
%The parameter $\beta>0$ determines the strength of the interaction,
%however in this one-dimensional model its numerical value does not
%affect the qualitative behavior of the system. In particular, there
%will be no loss of generality in fixing its value to $\beta \equiv 1$
%in the sequel. %Figure ?
 % in the sequel 

Occasionally we will consider the SOS model with further hard wall constraints, obtained by conditioning $\pi_{L,h}$ to the event $\xi^1\leq \eta\leq\xi^2$, where $\xi^i\in\O_{L,h}$, $i=1,2$ are two configurations such that $\xi^1\leq \xi^2$. Here, and below, we use the notation $\xi\leq \sigma$, for the natural partial order in $\O_{L,h}$ defined via 
$\xi_i\leq \sigma_i$, for all $i=1,\dots,L$. We refer to $\xi^1,\xi^2$ as the floor and the ceiling, respectively, and write $\pi_{L,h}^{\xi^1,\xi^2}$ for the corresponding equilibrium measure.
%Another version of this model is obtained by imposing the constraint that 
If $\wedge$ denotes the maximal configuration in $\O_{L,h}$, i.e.\
$\wedge_i\equiv L+h$, we sometimes consider the model with
$\xi^2=\wedge$ and $\xi^1=\bar\eta$ where $\bar \eta_i := \lfloor i h/(L+1)\rfloor$ for all $i=1,\dots,L$, 
i.e.\ the interface is above the straight line connecting the two
boundary values, cf. Figure \ref{fig:sos} (b). In this case one speaks simply of an {\em interface above the wall}. 
%By symmetry, if $\vee$ stands for the minimal element of $\O_{L,h}$,
%$\vee_i\equiv -L$, then the model with $\xi^1=\vee$ and
%$\xi^2=\bar\eta$ describes an interface below the wall. 
Note that $\bar \eta$ is the SOS equivalent of the ``monotone surface
$\bar\phi^{\bf n}$
with fixed slope'', cf. Definition \ref{def:nu}.

In the following, whenever we do not explicitly mention floor and ceiling,
it is understood that we are talking about the bounded model where
$\xi^1=\wedge$ and $\xi^2=\vee$, where $\vee$ is the minimal
configuration in $\O_{L,h}$: $\vee_i\equiv -L$.

\subsection{Dynamics}
The evolution of the interface is given by the standard heat bath dynamics, i.e.\ single-site Glauber dynamics described as follows. 
There are independent Poisson clocks with mean $1$ at each site $i\in\{1,\dots,L\}$. When site $i$ rings, the height $\eta_i$ is updated to 
the new value $\max\{\eta_i-1,-L\}$ or $\min\{\eta_i+1,L+h\}$  
with probabilities $p_{i,-}(\eta)$, $p_{i,+}(\eta)$ respectively, determined by: 
\begin{gather}\label{ppm}
p_{i,-}(\eta)=\frac{e^{-2} }{1+e^{-2}}\,1_{\{\eta_i \leq a\}} + \frac12\,1_{\{b\geq \eta_i > a\}} 
+ \frac{1}{1+e^{-2}}\,1_{\{\eta_i > b\}}
\nonumber\\
%\frac12\begin{cases}
%e^{-2%\beta
%} & \text{if}\;\eta_i \leq a\\
%1 & \text{otherwise}
%\end{cases}
%\;\;\quad \quad 
p_{i,+}(\eta)=\frac{e^{-2}}{1+e^{-2}}\,1_{\{\eta_i \geq b\}} + \frac12\,1_{\{b> \eta_i \geq a\}} 
+ \frac{1}{1+e^{-2}}\,1_{\{\eta_i < a\}}
%\frac12\begin{cases}
%e^{-2%\beta
%} & \text{if}\;\eta_i \geq b\\
%1 & \text{otherwise}
%\end{cases}
\end{gather}
where $a:=\min\{\eta_{i-1},\eta_{i+1}\}$ 
and $b:=\max\{\eta_{i-1},\eta_{i+1}\}$. With the remaining
probability $1 - (p_{i,-}(\eta) + p_{i,+}(\eta))$, $\eta_i$ stays at its current value. 
%
%if $\eta_i \leq a$
%then $p_{i,-} = \frac{e^{-2}}{2}$, else $p_{i,-} = \frac12$; if $\eta_i\geq b$ then $p_{i,+} = \frac{e^{-2}}2$, 
%else $p_{i,+} = \frac12$. (With the remaining
%probability $1 - (p_{i,-} + p_{i,+})$, do nothing). 
It is not hard to check that this defines a 
continuous time Markov chain with state space $\O_{L,h}$ and
stationary reversible measure given by $\pi_{L,h}$. %Moreover the chain
%is monotone in the sense discussed in Section \ref{}. 
In the sequel
we will write $\eta^\xi(t)$ for the random variable describing the
state of the Markov chain at time $t$ with initial state $\xi$ and $\mu_t^\xi$ for its distribution. 
Let $\tmix=\tmix(L,h)$ denote the mixing time of this Markov chain. 

We may consider the evolution of the system under hard wall constraints $\xi^1,\xi^2$ as above. 
This amounts to the same %consists in the same 
dynamics except that any update which
would violate the constraints %$\eta_i \geq ih/L$ 
is rejected. The dynamics is then reversible w.r.t.\ the equilibrium measure $\pi_{L,h}^{\xi^1,\xi^2}$
associated to the floor $\xi^1$ and the ceiling $\xi^2$, see
also Section \ref{sec:flocei} above
for the analogous constrained dynamics in the monotone surface case. 
In either case, with or without hard walls, the monotonicity considerations recalled in Section \ref{sec:monoton} apply here without modifications, with the natural partial order on configurations introduced above.
% given by 
%$\eta\leq \xi$ iff $\eta_i\leq \xi_i$ for all $i$.

Our main result about the mixing time of the SOS interface is:
\begin{theo}
\label{th:sos}
There exists $\alpha>0$ such that for any $L$  sufficiently large, uniformly in $0\leq h\leq L$ one has
\begin{equation}
  \label{eq:2sos}
  \tmix\le L^2(\log L)^{\alpha}.
  \end{equation}
The same bound holds for the interface constrained to stay  above the wall $\bar\eta$.
\end{theo}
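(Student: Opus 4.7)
The plan follows the mean-curvature strategy outlined in the introduction, specialized to the one-dimensional SOS setting. By the global monotone coupling of Section~\ref{sec:monoton}, it suffices to show that the chain started from the maximal configuration $\wedge$ reaches $\pi_{L,h}$ in total variation within time $L^2(\log L)^\alpha$; the symmetric bound from $\vee$ is obtained by the same argument, and the walled version follows by applying the construction to the gap $\eta-\bar\eta$ above the floor $\bar\eta$, since the hard-wall dynamics of Section~\ref{sec:flocei} inherits the same monotonicity. The core task is then to produce a deterministic sequence $\Phi^{(0)}\ge\Phi^{(1)}\ge\dots\ge\Phi^{(K)}$ of smooth profiles in $\O_{L,h}$, with $\Phi^{(0)}\ge\wedge$ and $\Phi^{(K)}$ an $O(\sqrt L(\log L)^C)$-neighborhood of the equilibrium mean, and to prove inductively that $\eta(t_k)\le \Phi^{(k)}$ with high probability for a suitable time schedule $0=t_0<t_1<\dots<t_K$.

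At stage $k$, let $R_k$ denote the minimal local radius of curvature of $\Phi^{(k)}$ and set the mesoscopic block length to $\ell_k\simeq R_k^{2/3}(\log L)^c$. This exponent is forced by the SOS scaling: on a block of length $\ell$ with fixed boundary values, equilibrium height fluctuations are of order $\sqrt \ell$ up to logarithms (by Lemma~\ref{lem:eq}), while the deterministic mean-curvature drop of the envelope across a block is of order $\ell^2/R_k$; balancing these forces $\ell\sim R_k^{2/3}$. The time increment is $\Delta t_k\simeq\ell_k^2(\log L)^{c'}$, matching the local mixing time on a block. To execute the inductive step I would partition $\{1,\dots,L\}$ into consecutive intervals of length $\ell_k$, freeze the heights at the block endpoints to the values prescribed by $\Phi^{(k)}$, and run the in-block dynamics with these boundary conditions. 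By the $\wt O(\ell^2)$ local mixing-time bound obtained in \cite{MS} via Wilson's argument \cite{Wilson} and the Peres-Winkler censoring inequalities, after time $\Delta t_k$ the in-block distribution is $L^{-10}$-close in total variation to the corresponding local equilibrium measure. Since that measure concentrates around the straight segment joining the two frozen endpoints (again by Lemma~\ref{lem:eq}), its typical values lie below the new envelope $\Phi^{(k+1)}$, defined by subtracting a layer of height $\simeq\ell_k^2/R_k$ on each block. A union bound over the $O(L/\ell_k)$ blocks, combined with the monotone coupling between the actual dynamics and the block-frozen one, yields $\eta(t_{k+1})\le\Phi^{(k+1)}$ with the required probability.

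The construction forces $R_{k+1}>R_k$, reflecting the flattening produced by mean-curvature motion; iterating until $R_k$ reaches the scale $\sqrt L$ at which equilibrium fluctuations become comparable to the envelope amplitude, the cumulative time satisfies $\sum_k\Delta t_k\le L^2(\log L)^\alpha$ for a suitable $\alpha$, and $\Phi^{(K)}$ lies within $\sqrt L(\log L)^C$ of equilibrium. A final application of the local mixing bound on the full interval together with the tail estimate of Lemma~\ref{lem:eq} then closes the argument. The principal obstacle is to prevent large gradients from forming in the stochastic profile, since these would destroy both the mean-curvature picture and the monotone domination by frozen boundary data at the block edges. This is exactly where the scale $\ell_k\sim R_k^{2/3}$ is critical: the equilibrium Gaussian-type tail of order $\sqrt{\ell_k}$ is comfortably dominated by the drop $\ell_k^2/R_k$, so that atypical local spikes have probability summable over the $O(L/\ell_k)$ blocks and the polylogarithmic number of stages, and this same control guarantees that the updated envelope $\Phi^{(k+1)}$ remains smooth enough for the next iteration.
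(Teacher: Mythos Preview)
Your overall strategy---iterate mean-curvature descent using mesoscopic blocks of size $\ell_k\simeq R_k^{2/3}$, then close with the near-equilibrium mixing bound of \cite{MS} (Proposition~\ref{th:key} here)---is exactly the paper's approach, and your explanation of why the exponent $2/3$ is forced is the right one.

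There is however a genuine gap in the inductive step as you describe it. You propose to \emph{partition} $\{1,\dots,L\}$ into consecutive blocks of length $\ell_k$, freeze the heights at the block endpoints to $\Phi^{(k)}$, equilibrate, and conclude that the interior drops below $\Phi^{(k+1)}:=\Phi^{(k)}-c\,\ell_k^2/R_k$. But the block equilibrium concentrates near the \emph{chord} joining the two frozen endpoint values; this chord lies $\simeq\ell_k^2/R_k$ below the envelope at the block \emph{center}, while at distance $d$ from an endpoint the gap is only $\simeq d(\ell_k-d)/R_k$, vanishing at the endpoints. So a fixed partition gives no descent near block boundaries, and you cannot subtract a uniform layer. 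A related issue: defining $\Phi^{(k+1)}$ by a block-wise subtraction introduces kinks at the block interfaces, so the ``minimal radius of curvature $R_{k+1}$'' of the next envelope is not well-defined and the iteration breaks.

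The paper avoids both problems simultaneously. First, the envelopes are taken to be \emph{circular segments} $\cC_{u_n}$ with base $2\rho_L=2L\log L$: these are globally smooth with constant curvature, and the update $u_n\mapsto u_{n+1}=u_n-(\rho_L^2/u_n)^{1/3}(\log L)^2$ just lowers the cap height. Second, there is no partition; the argument is \emph{pointwise}. For each site $i$ one takes the tangent to $\cC_{u_n}$ at $(i,\psi_{u_n}(i))$, shifts it down by $2(u_n-u_{n+1})$, and lets $I=I(i)$ be the resulting chord. The point $i$ sits at the center of its own $I(i)$ and therefore receives the full curvature drop; the auxiliary dynamics in $I(i)$ with boundary conditions $\lfloor\psi_{u_n}\rfloor$ starts from a configuration within $O(\sqrt{|I|}(\log|I|)^{3/2})$ of its straight-line mean, so Proposition~\ref{th:key} (with $\kappa=3/2$) applies and delivers the domination at $i$. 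A union bound over $i$ then replaces the partition. Finally, your phrase ``iterating until $R_k$ reaches the scale $\sqrt L$'' should read: iterate until the cap height $u_n\simeq\rho_L^2/R_n$ drops to $\sqrt L(\log L)^4$, i.e.\ until $R_n$ has \emph{grown} to order $L^{3/2}$.
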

\begin{rem}
  \begin{enumerate}[1.]\ 
  \item It will be clear from the proof that the above bound is satisfied as soon as $\alpha>21$. 
  With some effort this power of $\log L$ can be considerably improved. However, the method we use does not seem to be capable of reaching  the presumably optimal bound $\tmix = O(L^2\log L)$. 
\item % Consider the 
% %It might be useful to consider a slightly 
% more general model %than the one introduced above. Namely, instead of 
% %imposing a homogeneous bottom/top constraints $-L\leq\eta_i\leq L+h$ one can consider
% %general 
% obtained by imposing (inhomogeneous) hard wall  constraints of the form 
% $\xi^1_i\leq\eta_i\leq \xi^2_i$, for arbitrary 
% $\xi^1,\xi^2\in\bbZ$ provided that $h\leq L$ and, for some constant $C>0$ one has  
% $-CL\leq \xi^1_i\leq \xi^2_i\leq C(L+h)$, for $i=1,\dots,L$. 
%It will be clear from the proof that the 
Minor modifications of the proof show that the result of Theorem \ref{th:sos} 
continues to hold as it is for the dynamics constrained between a
floor $\xi^1$ and  a ceiling $\xi^2$ for any $\xi^1,\xi^2\in \Omega_{L,h}$ such that $\xi^1\leq \bar\eta\leq \xi^2$.
\item A further extension is obtained by letting the  
%bottom/top 
constraints $-L\leq\eta_i\leq L+h$ be replaced by $-M\leq\eta_i\leq M+h$, where $M$ 
is an independent parameter, possibly much larger than $L$. %As discussed in Remark \ref{} below, 
It is possible to extend the proof of Theorem \ref{th:sos} to get the essentially sharp 
bound $\tmix = \tilde O(L\max(L,M))$.  \end{enumerate}
\end{rem}

%add remarks why bounded/unbounded

\subsection{Spectral gap}\label{sec:gap}
The unbounded version of the SOS model is given in \eqref{eq:unbo}.
We write again $\wt\pi=\wt\pi_{L,h}$ for the corresponding Gibbs
measure.  The dynamics is the same as above except for the absence of
the constraints $\vee\le \eta\le \wedge$, i.e.\ when the clock
labeled $i$ rings, $\eta_i$ is updated to the new value $\eta_i\pm 1$ with probability $p_{i,\pm}$ given by \eqref{ppm}. 
The infinitesimal generator is given by
\begin{equation}\label{gener}
\cL f(\eta) = \sum_{i=1}^L \big
\{p_{i,+}(\eta) \nabla_{i,+}f(\eta) +  p_{i,-}(\eta) \nabla_{i,-}f(\eta)
\big\}
\,,
\end{equation}
where $\nabla_{i,\pm}f(\eta) = f(\eta^{i,\pm})-f(\eta)$, and $\eta^{i,\pm}$ is the configuration coinciding with $\eta$ everywhere except that at site $i$ the value of $\eta_i$ is replaced by $\eta_i\pm 1$.
The Dirichlet form is given by
\begin{equation}\label{dirich}
\cE(f,f) = \frac12\sum_{i=1}^L \wt\pi\big[
p_{i,+} (\nabla_{i,+}f)^2 +  p_{i,-} (\nabla_{i,-}f)^2
\big]
\end{equation}
where $\wt\pi[\cdot]$ denotes expectation w.r.t.\ $\wt\pi$. Note that, for each finite $L$, $\cL$ defines a bounded self-adjoint operator in $L^2(\bbZ^L,\wt\pi)$. 
The associated spectral gap is  defined by \eqref{defgap}, where $f$ ranges over all $f\in L^2(\bbZ^L,\wt\pi)$ with nonzero variance.
\begin{theo}
\label{th:gap}
Let $\alpha>0$ be as in Theorem \ref{th:sos}. 
For some constant $c>0$, for all $0\leq h\leq L$, the spectral gap of the unbounded SOS dynamics satisfies 
\begin{equation}
  \label{eq:gap}
  \gap(L)\ge c\,L^{-2}(\log L)^{-\alpha}.
  \end{equation}
The same bound holds for the interface constrained to stay above the
wall $\bar \eta$. 
\end{theo}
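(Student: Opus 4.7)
The plan is to bootstrap Theorem \ref{th:sos} to the unbounded setting by a variance-decomposition argument in the spirit of the kinetically constrained spin systems analysis of \cite{CMRT}.

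\textbf{Inputs.} First, by the standard inequality $\gap\ge c_0/\tmix$ valid for reversible continuous-time chains with uniformly bounded jump rates, Theorem \ref{th:sos} gives $\gap\ge cL^{-2}(\log L)^{-\alpha}$ for the bounded chain on $\O_{L,h}$, and the extension stated in Remark 3 following Theorem \ref{th:sos} gives $\gap\ge c(LM)^{-1}(\log L)^{-\alpha'}$ for the bounded chain on heights in $[-M,M+h]$, $M\ge L$. Second, standard SOS equilibrium estimates (Laplace-type tails of the gradient variables, combined with Lemma \ref{lem:eq}) yield $\wt\pi(|\eta|_\infty>M)\le CL\,e^{-cM}$ for $M\gg\sqrt L$, so $\wt\pi$ is super-polynomially concentrated on $V_0:=\{|\eta|_\infty\le L\}$.

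\textbf{Main decomposition.} For a test function $f\in L^2(\bbZ^L,\wt\pi)$ with $\wt\pi(f)=0$, I would introduce the nested dyadic events $V_k:=\{|\eta|_\infty\le 2^kL\}$, $k\ge 0$, and apply the law of total variance
\begin{equation*}
\Var_{\wt\pi}(f)=\wt\pi(V_k)\Var_{\wt\pi(\cdot|V_k)}(f)+\wt\pi(V_k^c)\Var_{\wt\pi(\cdot|V_k^c)}(f)+\wt\pi(V_k)\wt\pi(V_k^c)\bigl(\wt\pi[f|V_k]-\wt\pi[f|V_k^c]\bigr)^2.
\end{equation*}
For $k=0$ the first summand would be controlled by the bounded gap at scale $L$ applied to $\wt\pi(\cdot|V_0)=\pi_{L,h}$, after comparing Dirichlet forms: the bounded Dirichlet form is dominated by the unbounded one, since bulk single-site rates agree and boundary rejections at $\eta_i=\pm L$ only inflate $\cE$. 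The remaining tail and cross terms would be treated by iterating the decomposition on $V_{k+1}\setminus V_k$ and applying the bounded gap at scale $2^kL$: on each shell the gap is at least $c\,2^{-k}L^{-2}(\log L)^{-\alpha'}$ while $\wt\pi(V_k^c)\le \exp(-c\,2^kL)$, so the resulting series in $k$ is geometrically summable and negligible compared to the $k=0$ contribution.

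\textbf{Main obstacle.} The hard part will be controlling the cross terms $\wt\pi(V_k)\wt\pi(V_k^c)\bigl(\wt\pi[f|V_k]-\wt\pi[f|V_k^c]\bigr)^2$ by $\cE(f,f)$. Since single-site moves change $|\eta|_\infty$ by at most one, bridging from $V_k$ to $V_k^c$ requires $\Theta(2^kL)$ consecutive moves, and a naive canonical-path argument would pay a factor $(2^kL)^2$ in the Poincar\'e comparison. The crucial input from \cite{CMRT} is a block-dynamics estimate which, by routing flows through typical high-probability configurations where an auxiliary block move acts almost unobstructed, collapses this polynomial cost to polylog $L$. Monotonicity and the global monotone coupling of Section \ref{sec:monoton} are essential in making this flow construction uniform in the configuration, and this is where the bulk of the technical work will concentrate. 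The same strategy carries over verbatim to the version constrained above the wall $\bar\eta$, since both Theorem \ref{th:sos} and the equilibrium tail estimates hold in that setting.
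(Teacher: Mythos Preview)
Your proposal differs substantially from the paper's argument, and as written it has a real gap.

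The paper does \emph{not} reduce the unbounded gap to a weighted sum of bounded gaps over dyadic height shells. Instead it proves directly that the bounded gap $\gap(L,H)$ is bounded below \emph{uniformly in $H$} by $cL^{-2}(\log L)^{-\alpha}$ (Proposition~\ref{propgap}), after which the unbounded result follows trivially by letting $H\to\infty$. The uniformity in $H$ is obtained by a recursion on the height: one sets $\gamma(L,n)=\max\{\gap(\Lambda)^{-1}:\ell_\Lambda\le L,\ h_\Lambda\le(3/2)^n\}$, splits a tall box $\Lambda$ as two overlapping shorter boxes $\Lambda_1\cup\Lambda_2$, and shows $\gamma(L,n)\le(1+\delta_n)\gamma(L,n-1)$ with summable $\delta_n$. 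The \cite{CMRT} input is used here in a very specific way: it gives the spectral gap of an auxiliary \emph{constrained block dynamics} (resample $\Lambda_2$ freely; resample $\Lambda_1$ only when $\eta\subset\Lambda_1$), not to control cross terms in a variance decomposition. The base case $\gamma(L,n_0)$, with $(3/2)^{n_0}\asymp L$, comes from Theorem~\ref{th:sos}; Remark~3 is never used.

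Your scheme, by contrast, relies on the degrading bound $\gap(L,M)\gtrsim (LM)^{-1}$ from Remark~3 and hopes to compensate by the small weight $\wt\pi(V_k^c)$. This does not close. The term $\wt\pi(V_k^c)\Var_{\wt\pi(\cdot|V_k^c)}(f)$ (or its shell version) is not controlled by the gap of the bounded chain on $V_{k+1}$: the conditional law $\wt\pi(\cdot|S_k)$, $S_k=V_{k+1}\setminus V_k$, is \emph{not} the bounded SOS measure, and there is no reason its gap should be $\gtrsim 2^{-k}L^{-2}$; in fact the restricted chain on $S_k$ may not even be well behaved. If instead you simply bound $\Var_{\wt\pi}(f)$ by $\Var_{\wt\pi(\cdot|V_K)}(f)$ for large $K$, the factor $2^K$ from Remark~3 has nothing to cancel it. The cross terms you flag as the ``main obstacle'' are equally problematic: $\wt\pi(V_k^c)(\wt\pi[f|V_k]-\wt\pi[f|V_k^c])^2$ involves $\wt\pi[f|V_k^c]$, which can be of order $\wt\pi(V_k^c)^{-1/2}\|f\|_{L^2(\wt\pi)}$, so the exponentially small prefactor is exactly cancelled and you are left needing to bound something of order $\Var(f)$ by $\cE(f,f)$---circular. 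The \cite{CMRT} machinery does not provide a canonical-path estimate of the type you describe; in the paper (and in \cite{CMRT}) it is a spectral argument for a two-block constrained generator, and monotone couplings play no role in it. In short, the essential content of the theorem is precisely that the bounded gap does \emph{not} degrade with the ceiling height, and your decomposition assumes away this point rather than proving it.
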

This estimate is optimal, modulo the logarithmic factor: an upper
bound $O(1/L^2)$ is given e.g. in \cite{MS}. 
The lower bound $\gap(L)\ge c/L^2$  was proven in \cite{Posta}
 for a modified
version of the SOS model with weak boundary couplings;  such modified model is much less sensitive to the
boundary conditions and has a genuinely different dynamical behavior. 
If instead the absolute value
interaction potential were replaced by one with strictly convex
behavior at infinity, then
the correct lower bound $\gap(L)\ge c/L^2$ would follow by well-established
recursive methods, see e.g. \cite{C}.

\section{Strategy of the proof}
\label{sec:scale}
As already announced in the introduction, despite the fact that the
equilibrium fluctuations of the interface in the two models are very
different, our bound $\tmix= \wt O(L^2)$ is proved following a
common strategy that we sketch here.

The crucial step is the following (see Propositions \ref{prop:maxmin} and \ref{prop1} below for a precise
formulation in the case of monotone surfaces and SOS model):
\begin{step}%[Getting to the right scale]
\label{claim:1}
  Starting from any
configuration, after time $\wt O(L^2)$ the distance between the
interface and the flat profile is not larger than its typical
equilibrium value 
$\chi_L$. 
\end{step}
At that point one can conclude $\tmix=\wt O(L^2)$ provided that a
result of the following type is available (see e.g. Proposition \ref{th:key} below in the case of 
the SOS model):
\begin{step}\label{th:vago1}
  If the initial condition $\xi$ is at distance $\chi_L$ from the flat
  profile, then $\|\mu^\xi_T-\pi\|\ll 1$ for some $T=\wt O(L^2)$.
\end{step}

The proof of such result is model-dependent: for the SOS model it
was given in \cite{MS} and for monotone surfaces it follows
from results in \cite{CMST} (see Propositions \ref{prop:quasisoff} and
\ref{prop:wilson} below) .

In turn, Step \ref{claim:1} follows if one proves that, with high probability, the
interface started from the maximal  configuration stays
below a \emph{deterministic interface evolution}  which
after time $\wt O(L^2)$ is at the correct distance $O(\chi_L)$ from the flat
profile. 
It turns out that it is actually sufficient to define the
deterministic interface evolution along a sequence of deterministic times
$t_n,0\le n\le M$. At all
times $t_n$, the deterministic interface is the boundary of $\cC_{u_n}$ where, given
$u>0$, $\mathcal C_{u}$ is a spherical cap (if we are considering two-dimensional
interfaces like monotone surfaces) or a circular segment (in the case of
one-dimensional
interfaces like SOS) of height $u$ and base of linear size $\rho_L$
roughly of order $ L$, see Figure \ref{fig:cun}. The base of $\cC_u$ lies on the plane/line which contains the macroscopic flat profile.
The evolution of $\cC_{u}$, by a kind of ``flattening
process'', in the time interval $(t_{n-1},t_n]$ transforms $\mathcal
C_{u_{n-1}}$ into $\mathcal C_{u_n}$. The sequence  of
increasing times $\{t_n\}_{0\le n\le M}$ and
of 
decreasing heights $\{u_n\}_{0\le n\le M}$   will be introduced in a moment. 
The ``domination statement'' then is of the following type (see Propositions \ref{lemma:duro} and \ref{propC}):
\begin{claim}\label{th:vago2}
  For all $0\le n\le M$, with high probability the following
  holds. For all times in $[t_n,L^3]$ the evolution started from the
  maximal configuration stays below the boundary of $\cC_{u_n}$. 
\end{claim}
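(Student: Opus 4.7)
I would prove Claim \ref{th:vago2} by induction on $n$. The base case $n=0$ is arranged by choosing $u_0$ large enough that $\partial\cC_{u_0}$ dominates the maximal configuration $\wedge$; since $\wedge$ is the global upper bound on the state space, we have $\eta(t)\le\wedge\le\partial\cC_{u_0}$ for all $t$, trivially. For the inductive step, assume that with high probability $\phi(t_{n-1})\le\partial\cC_{u_{n-1}}$. By the global monotone coupling recalled in Section \ref{sec:monoton}, it suffices to analyze an auxiliary dynamics $\tilde\phi(t)$ started at $t_{n-1}$ from $\partial\cC_{u_{n-1}}$ and to show that $\tilde\phi(t_n)\le\partial\cC_{u_n}$ with high probability. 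The upgrade from the discrete time $t_n$ to the whole interval $[t_n,L^3]$ follows by reapplying the estimate at every subsequent $t_m$, $m\ge n$, and absorbing the upward fluctuation on the short inter-step window $[t_m,t_{m+1}]$ into a polylog buffer already baked into the caps.

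The core of the inductive step is a local analysis at the mesoscopic scale $\ell_n$ matched to the current curvature radius $R_n$ of $\partial\cC_{u_{n-1}}$ as prescribed in Section \ref{sec:scale}: $\ell_n\simeq R_n^{1/2}$ for monotone surfaces and $\ell_n\simeq R_n^{2/3}$ for SOS, both up to polylog corrections. Tile the horizontal footprint of $\cC_{u_{n-1}}$ by overlapping boxes $B_j$ of side $\ell_n$. On each $B_j$, the smooth surface $\partial\cC_{u_{n-1}}$ differs from its tangent plane $\Pi_j$ by at most $O(\ell_n^2/R_n)$ and has slope uniformly away from the degenerate directions, so it defines a good planar boundary datum in the sense of Definition \ref{ass:eta} (or its SOS analog). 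Freeze $\tilde\phi$ outside $B_j$ at the values of $\partial\cC_{u_{n-1}}$: by monotonicity this only dominates the true evolution. Proposition \ref{prop:wilson} (resp.\ its SOS counterpart from \cite{MS}) yields local mixing time $\wt O(\ell_n^2)$ inside $B_j$, so taking $t_n-t_{n-1}$ to be a polylog multiple of $\ell_n^2$ drives the configuration in $B_j$ at time $t_n$ to within small total-variation distance from the local Gibbs measure. Theorem \ref{th:eq3d} (resp.\ Lemma \ref{lem:eq}) then shows that this Gibbs measure concentrates around $\Pi_j$ with fluctuations of size $\chi_{\ell_n}$, namely polylog for monotone surfaces and $\wt O(\sqrt{\ell_n})$ for SOS.

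The sequences $\{u_n\}$ and $\{t_n\}$ are tuned so that the vertical drop $u_{n-1}-u_n$ comfortably exceeds the sum $O(\ell_n^2/R_n)+\chi_{\ell_n}$ of the curvature correction and the equilibrium fluctuation; this is precisely the discretization of the mean-curvature flow $\dot u\sim -1/R$ evaluated over one step. Under this tuning the probability that $\tilde\phi(t_n)$ exceeds $\partial\cC_{u_n}$ inside any given $B_j$ is at most $\exp(-\mathrm{polylog}(L))$, and a union bound over the $O((L/\ell_n)^2)$ boxes and the $M=\mathrm{polylog}(L)$ induction steps finishes the proof. The main technical obstacle lies at the rim of each $\cC_{u_n}$, where the slope of $\partial\cC_{u_n}$ degenerates and the ``good planar'' hypothesis of Theorem \ref{th:eq3d} fails: this thin boundary annulus must be handled separately, either via a crude deterministic height bound valid when one or two slope components vanish, or by shrinking the horizontal base of each cap by an amount negligible on the macroscopic scale but sufficient to keep every mesoscopic box $B_j$ safely in the non-degenerate regime.
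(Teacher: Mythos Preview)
Your overall scheme (induction on $n$, monotonicity, mesoscopic boxes, local mixing + equilibrium fluctuation estimate) matches the paper's, but there is a structural gap in the inductive argument. You take as inductive hypothesis the \emph{single-time} bound $\phi^\wedge(t_{n-1})\le\partial\cC_{u_{n-1}}$, then start an auxiliary $\tilde\phi$ from $\partial\cC_{u_{n-1}}$, and finally ``freeze $\tilde\phi$ outside $B_j$ at the values of $\partial\cC_{u_{n-1}}$''. For this freeze to dominate $\tilde\phi$ via monotonicity, you would need $\tilde\phi_{\partial B_j}(s)\le \partial\cC_{u_{n-1}}$ for \emph{all} $s\in[t_{n-1},t_n]$; but $\tilde\phi$ evolves freely and can exceed its own initial profile. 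Your proposed fix --- reapplying the estimate at later $t_m$ and controlling ``upward fluctuation on the short inter-step window'' --- is circular, because to get the single-time bound at $t_n$ via the freeze you already need the full-interval bound on $[t_{n-1},t_n]$. The paper resolves this by making the inductive hypothesis exactly the full-interval statement (this is why the Claim is phrased ``for all times in $[t_n,L^3]$''): one conditions on $\{\phi^\wedge(s)\le\psi_{u_n}\ \forall s\in[t_n,L^3]\}$, and on that event the true dynamics' boundary values at $\partial\tilde U$ are dominated by the frozen ones throughout, so the local comparison is legitimate. The full interval $[t_{n+1},L^3]$ at the next level is then obtained directly by a union bound over times once the local dynamics is close to equilibrium.

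A few smaller points. (i) Proposition~\ref{prop:wilson} alone does not give $\tmix=\wt O(\ell_n^2)$ inside $B_j$: it gives $O(\ell_n^2 H^2)$ with $H$ the floor/ceiling gap, and you have not produced a polylog $H$. The paper combines it with Proposition~\ref{prop:quasisoff} to trap the local dynamics between a floor and ceiling at distance $O((\log L)^{3/2})$ from the tangent plane. (ii) You write $M=\mathrm{polylog}(L)$; in fact $M\sim 2L$ for monotone surfaces and is polynomial for SOS. The union bound still works because the per-step failure probability is stretched-exponential in $\log L$, but the arithmetic should be corrected. (iii) The ``rim'' concern is a non-issue in the paper's setup: the base disk $W$ has radius $\rho_L=L\log L$ while the region $U$ has diameter $L$, so every $x\in U$ sits well inside $W$ and the normal to $\cC_{u_n}$ at $(x,\psi_{u_n}(x))$ is uniformly close to ${\bf n}$.
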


The initial height $u_0$ is taken to be proportional to $L$ and one
sets $t_0=0$; this guarantees that the statement of Claim
\ref{th:vago2} holds trivially for $n=0$. In order
to choose $u_{n+1}$ given $u_n$, one uses  the following procedure.
Consider the spherical cap/circular segment $\cC_{u_n}$ and choose a point on its curved
boundary (e.g. the highest one). Move inward (i.e. inside $\cC_{u_n}$)
the tangent plane/line
at the chosen point by an amount $\Delta$ and call $d_\Delta$ the diameter of the intersection
between the plane/line with $\cC_{u_n}$, see Figure \ref{fig:cun}. 
\bigskip

\begin{figure}[h]
\centerline{
\includegraphics[scale=2, width=16cm]{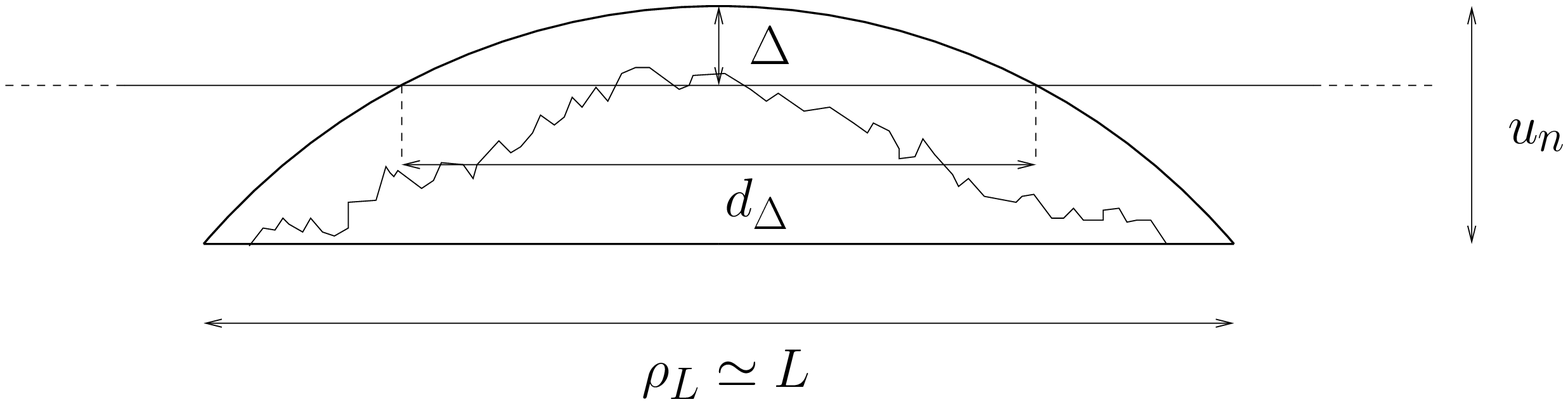}}
\caption{The spherical cap/circular segment $\cC_{u_n}$. The
  deterministic evolution at time $t_n$ coincides with the curved
  portion of the boundary of  $\cC_{u_n}$. At that time, the true stochastic evolution
  (wiggled line) stays with high probability below it. Elementary
  geometry shows that $\Delta\simeq d_\Delta^2(u_n/\rho_L^2)$. The requirement
  $u_n-u_{n+1}=\Delta_n\simeq d_{\Delta_n}^\gamma$ then leads to 
$u_n- u_{n+1}\simeq (\rho_L^2/u_n)^{\gamma/(2-\gamma)}$.}
\label{fig:cun}
\end{figure}
Then $u_{n}-u_{n+1}$ is
chosen as 
the
critical value $\Delta_n$ such that the equilibrium fluctuations on scale
$d_{\Delta_n}$ are of order $\Delta_n$ (apart from logarithmic
corrections), i.e. $\Delta_n=\wt O(\chi_{d_{\Delta_n}})$. Also, $M$
is the smallest index such that $u_M\le \chi_L$,
i.e. $u_M$
is of the order of the equilibrium
height fluctuations on scale $L$.
As for the
time sequence $\{t_n\}_{0\le n\le M}$, one sets
$t_{n+1}-t_n$ to be of order $d_{\Delta_n}^2$ (again
neglecting logarithmic corrections): that this is the correct
choice is guaranteed by a careful use of Step \ref{th:vago1}, applied
with $L=d_{\Delta_n}$. 
It is not difficult to realize that $t_M=\wt O(L^2)$.
Indeed, assume for definiteness that $\chi_L\sim L^\gamma$ for some $0\le \gamma<1$, where
if $\gamma=0$ we mean that $\chi_L\sim \text{polylog}(L)$. Then,
simple geometric considerations show that 
\[
u_n-u_{n+1}=\wt O \left(\left(\frac{\rho_L^2}{u_n}\right) ^{\gamma/(2-\gamma)}
\right),\;\;\;t_{n+1}-t_n=\wt O\left(\left(\frac{\rho_L^2}{u_n}\right) ^{2/(2-\gamma)}
\right).
\]
Approximating the recursion for $u_n$ with a differential equation gives 
\[
u_n^{2/(2-\gamma)}\simeq
u_0^{2/(2-\gamma)}-\rho_L^{2\gamma/(2-\gamma)}n
\simeq L^{2\gamma/(2-\gamma)}\left[L^{(2-2\gamma)/(2-\gamma)}-n\right]
\]
since both $u_0$ and $\rho_L$ are of order $L$.
In particular, one has roughly $M= O(L^{(2-2\gamma)/(2-\gamma)})$.
Then, 
\[
t_M=\sum_{n=0}^{M-1}(t_{n+1}-t_n)\simeq
\rho_L^{4/(2-\gamma)}\sum_{n=0}^{M-1}\frac1{u_n^{2/(2-\gamma)}}\simeq
\frac{\rho_L^{4/(2-\gamma)}}{
  L^{2\gamma/(2-\gamma)}}\sum_{n=0}^{M-1}\frac1{L^{(2-2\gamma)/(2-\gamma)}-n}
=\wt O(L^2)
\]
since the last sum is of order $\log L$.
Remarkably, the order of magnitude of $t_M$ does not depend on the
fluctuation exponent $\gamma$, while the sequence $(t_n,u_n)$ and the
value of $M$ do.
The statement of Claim \ref{th:vago2} for $n=M$ allows to conclude Step \ref{claim:1}: the
evolution started from the maximal configuration,  at time $t_M=\wt
O(L^2)$,  is below the
deterministic evolution, which is within distance
$\chi_L$ from the flat profile.

Another way to understand the choice of the time-scales $t_n$ is the
following. If one imagines that the boundary of  $\cC_u$  evolves by ``mean
curvature'', i.e.  feeling a inward drift proportional to the inverse of
its instantaneous radius of curvature, then the time $t_{n+1}-t_n$ to transform $\mathcal
C_{u_{n}}$ into $\mathcal C_{u_{n+1}}$ must be $O(R_n\times
(u_{n}-u_{n+1}))$, where $R_n$ is the radius of curvature of
$\cC_{u_n}$. 
One can easily check that, apart from logarithmic
corrections, this coincides with the requirement $t_{n+1}-t_n \simeq d_{\Delta_n}^2$.

\section{Monotone surfaces: Proof of Theorem \ref{th:eq3d}}

An essential tool in the proof of Theorem \ref{th:eq3d} are the \emph{ translation
  invariant, ergodic Gibbs measures}, with given slope ${\bf n}$, on the set $\Omega$ of monotone surfaces
\cite{KOS,Sheffield}. The trick of using the properties of such infinite-volume states to obtain fluctuation bounds for surfaces with fixed boundary conditions
around  a finite region was already crucial in \cite{CMST}. 
The most relevant result for this purpose is the the following.
\begin{theo}\cite{KOS,Sheffield}
Given ${\bf n}>0$, $h\in
\ZZ$ and $\bar x\in \ZZ^2$, there exists a unique
law $\mu_{{\bf n},h,\bar x}$ on $\Omega$ such that 
\begin{enumerate}[(i)]
\item $\mu_{{\bf n},h,\bar x}(\phi_{\bar x}=h)=1$;
\item for every $x\in \ZZ^2$, 
$\mu_{{\bf n},h,\bar x}(\phi_x)=h+
  \Pi^{\bf n}(x-\bar x)$ (cf. Definition \ref{def:nu});

\item for every $x_1,\ldots,x_k\in \ZZ^2$ and $v\in\ZZ^2$ one has that the joint law of $\{\phi_{x_i}-\phi_{x_j}\}_{1\le i,j\le k}$ is the same as the joint law of 
$\{\phi_{x_i+v}-\phi_{x_j+v}\}_{1\le i,j\le k}$ (translation invariance of the law of the height gradients);
\item for every finite $U\subset \ZZ^2$ such that $\bar x\notin U$ and every monotone surface
  $\xi\in\Omega$ such that $\xi_{\bar x}=h$, the measure $\mu_{{\bf
      n},h,\bar x}$ conditioned to
  $\{\phi_x=\xi_x$ for every $x\notin U\}$ is $\pi^{\xi}_U$, i.e. the uniform measure over
  all $\phi\in\Omega$ which coincide with $\xi$ outside $U$ (DLR property).

\end{enumerate}
\end{theo}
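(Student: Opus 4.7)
The plan is to realize the sought measure via the classical bijection between monotone surfaces in $\ZZ^3$ and dimer coverings of the honeycomb lattice $\mathcal{H}$: each monotone surface corresponds uniquely to a lozenge tiling, equivalently to a perfect matching of $\mathcal{H}$, with the height function of the tiling recovering $\phi$ up to an additive constant that one then fixes by the pinning $\phi_{\bar x}=h$. Under this correspondence, constructing a translation invariant, ergodic Gibbs measure on monotone surfaces with slope ${\bf n}$ is equivalent to constructing a shift-invariant Gibbs measure on dimers with prescribed expected densities for the three edge orientations, these densities being in bijection with the set $\{{\bf n}>0, \|{\bf n}\|=1\}$ via the interior of the Newton polygon.

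For \emph{existence} I would follow the Kenyon--Okounkov construction: on the $N\times N$ torus, assign weights $(a,b,c)$ to the three edge orientations of $\mathcal{H}$ chosen so that the Boltzmann averages of the three edge densities match the value encoded by ${\bf n}$ (this is possible and unique up to gauge). Kasteleyn theory gives the partition function and all edge correlations as determinants of a weighted adjacency matrix, and as $N\to\infty$ the inverse Kasteleyn matrix converges to an explicit translation-invariant kernel $K_{\bf n}$ expressed as a double Fourier integral. The infinite-volume measure on dimer configurations is then the determinantal point process with kernel $K_{\bf n}$; pushing it forward to $\Omega$ via the height bijection and translating by the constant that enforces $\phi_{\bar x}=h$ gives the candidate $\mu_{{\bf n},h,\bar x}$. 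Properties (i) and (iii) are built in by construction. Property (ii) follows because the expectation of the net signed number of dimers of each type crossing a lattice path from $\bar x$ to $x$ equals the prescribed slope, which translates into $\mu_{{\bf n},h,\bar x}(\phi_x)=h+\Pi^{\bf n}(x-\bar x)$. Property (iv), the DLR identity, holds because the finite-volume conditional distributions of the uniform measures on dimer covers of a torus, given an exterior configuration, are uniform on the consistent extensions, and this finite-volume conditional description is preserved under the thermodynamic limit.

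The real obstacle is \emph{uniqueness}, which is the reason the statement cites \cite{Sheffield} in addition to \cite{KOS}. For this I would invoke Sheffield's classification of ergodic shift-invariant Gibbs measures on dimer coverings of $\mathcal{H}$: up to an additive pinning of the height at one reference face, such a measure is uniquely determined by its slope, provided the slope lies in the interior of the Newton polygon, which is exactly the condition ${\bf n}>0$. Any law $\mu$ on $\Omega$ satisfying (i)--(iv) is shift-invariant by (iii), Gibbs by (iv), of slope ${\bf n}$ by (ii), and anchored by (i); an ergodic decomposition together with the tail-triviality arguments in \cite{Sheffield} reduces uniqueness among all such laws to uniqueness among the ergodic ones, and Sheffield's theorem then forces $\mu=\mu_{{\bf n},h,\bar x}$. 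The hard technical input is the strict convexity of the Kenyon--Okounkov surface tension combined with the cluster-swapping and double-dimer arguments underlying Sheffield's theorem; in the present context it is appropriate to quote this rather than reprove it.
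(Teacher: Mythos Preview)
The paper does not give a proof of this statement: it is quoted verbatim as a result of \cite{KOS,Sheffield} and used as a black box, so there is no ``paper's own proof'' to compare against. Your sketch is a faithful summary of how those references actually establish the result (existence via the Kasteleyn/determinantal construction and the torus limit in \cite{KOS}, uniqueness via Sheffield's classification of ergodic gradient Gibbs measures and strict convexity of the surface tension), and is appropriate at the level of detail one would expect for a cited theorem.

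One small point worth tightening in your uniqueness paragraph: property (iii) gives only translation invariance of the \emph{gradient} field, not of $\mu$ itself, so strictly speaking you should pass to the induced measure on height differences before invoking Sheffield's theorem; the pinning (i) then lets you recover the full law from the gradient law. Also, when you reduce from ``all laws satisfying (i)--(iv)'' to ergodic ones via ergodic decomposition, you should say why every ergodic component has slope exactly ${\bf n}$ (and not merely that the average slope is ${\bf n}$): this is where strict convexity of the surface tension, or equivalently the variational principle in \cite{Sheffield}, is doing work. These are refinements rather than gaps.
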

Concerning height fluctuations under these Gibbs measures one can
prove that for every $\epsilon>0$ 
there exists a positive constant $c$ such that, for every  $a>0$ and
$L$ large enough, 
  \begin{eqnarray}
    \label{eq:17bis}
    \mu_{{\bf n},h,\bar x}\left(\exists y\in \ZZ^2:%\mbox{\;s.t. \;}
    \;d(\bar
      x,y)\le
    L\mbox{\;and\;}
|\phi_y-\mu_{{\bf n},h,\bar x}(\phi_y)|\ge a(\log L)^{1+\epsilon}\right)\le\frac1c e^{-c \,a (\log L)^{1+\epsilon}},
  \end{eqnarray}
(see \cite[Proposition 5.7]{CMST};
the proof is given there for $\epsilon=1/2$ and $a=1/8$ but it works identically for
$\epsilon>0,a>0$).
In other words, the surface lies on average on a plane of slope ${\bf n}$
and the height difference between two points $x,y$ does not differ from the
average height difference by much more than the logarithm of $d(x,y)$.
 The estimate \eqref{eq:17bis}  is obtained via the well-known fact
\cite{Kenyon-lecturenotes} that height differences between two points
can be written as the number of points of a determinantal point
process whose kernel is explicitly known.

The connection between the measure $ \mu_{{\bf n},h,\bar x}$ and  the ``infinite volume states'' for dimer coverings
of the infinite honeycomb lattice defined in \cite{KOS} is well known
and is discussed
for instance in \cite[Section 5]{CMST}. Let us just recall that the
components ${\bf n}^{(a)}, a=1,2,3$ are directly related to the fractions of
dimers of the three possible  types (horizontal or rotated by $\pi/3$ or by $(2/3)\pi$).

We now turn to the proof of \eqref{eq:17}. By symmetry it is enough to
show that 
 \begin{eqnarray}
    \label{eq:17tris}
    \pi^{\eta}_U\left(\exists y\in U:%\mbox{\;s.t. \;}
\phi_y \ge \bar \phi^{\bf n}_y + a(\log L)^{1+\epsilon}\right)\le(1/c) e^{-c\,a(\log L)^{1+\epsilon}}.
  \end{eqnarray}
By monotonicity the probability in the left-hand side of \eqref{eq:17tris} is increased whenever $\eta$ is replaced by 
$\eta'$ such that 
%  \begin{eqnarray}   
%     \pi^{\eta_{\partial U}}_U\left(\exists y\in U:%\mbox{\;s.t. \;}
% \phi_y\ge\bar \phi_y+\frac18(\log L)^{1+\epsilon}\right)\le
%     \pi^{\eta'_{\partial U}}_U\left(\exists y\in U:%\mbox{\;s.t. \;}
% \phi_y\ge\bar \phi_y+\frac18(\log L)^{1+\epsilon}\right)
%   \end{eqnarray}
$\eta_{\partial U}\le \eta'_{\partial U}$.
In particular, let $\eta'$ be sampled from $\mu_{{\bf n},h,\bar x}(\cdot|A)$ where $\bar x$ is a given element of $\partial U$, $h$ is such that
$\mu_{{\bf n},h,\bar x}(\phi_{\bar x})=\eta_{\bar x}+(a/2)(\log L)^{1+\epsilon} $
and $A$ is the event $A=A_{\eta,U}=\{\phi\in\Omega:\phi_{\partial U}\ge \eta_{\partial U}\}$.
From \eqref{eq:17bis} and the fact that $diam(U)=L$ one sees that
\[
\|\mu_{{\bf n},h,\bar x}(\cdot|A)-\mu_{{\bf n},h,\bar x}\|=O\left[\exp\left(-\frac a3\,c\,(\log L)^{1+\epsilon}\right)\right].
\]
This is because, if $y\in\partial U$, $\phi_y<\eta_y$ implies $\phi_y< \mu_{{\bf n},h,\bar x}(\phi_y)-(a/2+o(1))(\log L)^{1+\epsilon}$, 
recall Definition \ref{ass:eta}, so that $\mu_{{\bf n},h,\bar x}(A^c)=O(\exp(-(a/3)\,c\,(\log L)^{1+\epsilon}))$.
Therefore, the  probability in the left-hand side of \eqref{eq:17tris} is upper bounded by
\begin{gather*}
 \int\mu_{{\bf n},h,\bar x}(d\eta')\,\pi^{\eta'}_U\left(\exists y\in U:%\mbox{\;s.t. \;}
\phi_y-\bar \phi^{\bf n}_y\ge a(\log L)^{1+\epsilon}\right)
+O\left[\exp\left(-\frac a3\,c\,(\log L)^{1+\epsilon}\right)\right]\\
=
 \mu_{{\bf n},h,\bar x}\left(\exists y\in U:%\mbox{\;s.t. \;}
\phi_y-\bar \phi^{\bf n}_y\ge a(\log L)^{1+\epsilon}\right)
+O\left[\exp\left(-\frac a3\,c\,(\log L)^{1+\epsilon}\right)\right]  \\
=O\left[\exp\left(-\frac a3\,c\,(\log L)^{1+\epsilon}\right)\right]
\end{gather*}
where in the first step we used  the DLR property and in the second one the fluctuation bound \eqref{eq:17bis}, together with 
the fact that with our choice of $h$ one has $\mu_{{\bf n},h,\bar x}(\phi_y)=\bar \phi^{\bf n}_y+(a/2+o(1))(\log L)^{1+\epsilon}$.

\section{Monotone surfaces: Proof of Theorem \ref{th:3d}}
\label{3d}
In this section the slope ${\bf n}>0$, the region $U$ and the
good planar boundary condition $\eta$ with slope $\bf n$ are fixed as in Theorems
\ref{th:eq3d}, \ref{th:3d}.  Denote $\wedge$ (resp. $\vee$) the maximal
(resp. minimal) configuration of $\Omega_{\eta,U}$ with respect to
the partial ordering ``$\le$'' and recall that $\phi^\xi(t)$ denotes the
configuration at time $t$ started from $\xi$.

The first key ingredient is a result saying that, after time of order  $L^2(\log
L)^{12}$, the surface is at most at distance $(\log L)^{3/2}$ away from
the plane $\Pi^{\bf n}$ of slope ${\bf n}$ (cf. Definition \ref{def:nu}). It is here that the new ideas of mimicking the evolution by mean curvature play a crucial role. 
\begin{prop}
\label{prop:maxmin}
Let $T=(1/2)L^2(\log
L)^{12}$. Then, there exists $c>0$ such that
\begin{eqnarray}
  \label{eq:5}
  \bbP\left(\max_{x\in U} (\phi_x^\wedge(T)-\bar \phi^{\bf n}_x))> (\log
    L)^{3/2}\right)=O\left(e^{-c(\log L)^{3/2}}\right)
\end{eqnarray}
  and similarly 
\begin{eqnarray}
  \label{eq:5bis}
  \bbP\left(\min_{x\in U} (\phi_x^\vee(T)-\bar \phi^{\bf n}_x)<- (\log
    L)^{3/2}\right) =O\left(e^{-c(\log L)^{3/2}}\right).
\end{eqnarray}
\end{prop}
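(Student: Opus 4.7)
I would execute the mean-curvature program of Section~\ref{sec:scale} in the regime $\gamma=0$ relevant for monotone surfaces, where Theorem~\ref{th:eq3d} gives polylogarithmic equilibrium fluctuations. By the up-down symmetry of the dynamics it is enough to prove \eqref{eq:5}. Fix a small $\varepsilon\in(0,1/2)$ and a large universal constant $K$, and for $u>0$ let $\mathcal{C}_u$ denote the spherical cap of height $u$ above a disk $D\subset\Pi^{\bf n}$ of radius $\rho_L$ proportional to $L$, large enough that $U$ projects into $D$ and $\lfloor\partial\mathcal{C}_u\rfloor\ge\bar\phi^{\bf n}+u$ pointwise on $U$. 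Define
\begin{equation*}
u_0=L,\quad u_{n+1}=u_n-\Delta,\quad \Delta:=K(\log L)^{1+\varepsilon},\quad t_0=0,\quad t_{n+1}-t_n:=d_n^2(\log L)^{10},
\end{equation*}
with $d_n:=L\sqrt{\Delta/u_n}$, and stop at the first $M$ with $u_M\le(\log L)^{3/2}/2$. A direct estimate along the lines of Section~\ref{sec:scale} gives $M=\wt O(L/\Delta)$, $t_M=\wt O(L^2)\le T$, and $\lfloor\partial\mathcal{C}_{u_M}\rfloor\le\bar\phi^{\bf n}+u_M+O(1)\le(\log L)^{3/2}$ on $U$, so the proposition reduces to the bound $\phi^\wedge(T)\le\lfloor\partial\mathcal{C}_{u_M}\rfloor$ on an event of probability $1-O(e^{-c(\log L)^{3/2}})$.

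The core of the argument is the inductive domination claim (Claim~\ref{th:vago2}):
\begin{equation*}
\bbP\bigl(\exists\, s\in[t_n,L^3]:\ \phi^\wedge(s)\not\le\lfloor\partial\mathcal{C}_{u_n}\rfloor\bigr)\le n\,e^{-c(\log L)^{3/2}},\qquad 0\le n\le M.
\end{equation*}
The case $n=0$ holds by construction. For the step $n\to n+1$, condition on the good event at step $n$ and use the global monotone coupling of Section~\ref{sec:monoton}: it suffices to control, uniformly in $s\in[t_n,L^3]$, the constrained dynamics $\psi$ of Section~\ref{sec:flocei} started at time $t_n$ from the deterministic ceiling $\lfloor\partial\mathcal{C}_{u_n}\rfloor$ (no floor). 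Cover the peak region $\{\partial\mathcal{C}_{u_n}>\partial\mathcal{C}_{u_{n+1}}\}$ by $O(L^2/d_n^2)$ boxes $B=B(x)$ of linear size $d_n$; the geometric relation $d_n^2\asymp L^2\Delta/u_n$ illustrated in Figure~\ref{fig:cun} ensures that on each $B$ the cap $\partial\mathcal{C}_{u_n}$ differs from a suitable tangent plane of slope ${\bf n}_B>0$ by at most $\Delta/3$, while $\partial\mathcal{C}_{u_{n+1}}$ lies at least $\Delta/3$ below this tangent. The mesoscopic mixing input of Step~\ref{th:vago1} (Proposition~\ref{prop:wilson} below) applied to $\psi$ inside $B$ shows that after the mesoscopic time $d_n^2(\log L)^{10}$ its law is within $e^{-c(\log L)^{5}}$ in total variation of the constrained equilibrium with boundary data $\lfloor\partial\mathcal{C}_{u_n}\rfloor$ on $\partial B$. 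Theorem~\ref{th:eq3d} at scale $d_n$ with slope ${\bf n}_B$ then bounds the fluctuations of this equilibrium away from the tangent by $O((\log d_n)^{1+\varepsilon})\ll\Delta/3$ with probability $1-e^{-cK(\log d_n)^{1+\varepsilon}}$; hence $\psi(t_{n+1})\le\lfloor\partial\mathcal{C}_{u_{n+1}}\rfloor$ on $B$. Union-bounding over the $O(L^2)$ boxes and over the $O(L^3)$ mesoscopic sub-windows of length $d_n^2(\log L)^{10}$ in $[t_{n+1},L^3]$ (at each sub-window one restarts $\psi$ from $\lfloor\partial\mathcal{C}_{u_{n+1}}\rfloor$ by monotonicity) closes the induction.

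\textbf{Main obstacle.}
The bulk of the technical work is the mesoscopic step. Three points require care: (i)~the curvature discrepancy between the cap and its tangent must be bounded by $\Delta/3$ on each $B$, which is the geometric meaning of the choice $d_n\asymp L\sqrt{\Delta/u_n}$; (ii)~the tangent slope ${\bf n}_B$ must stay uniformly bounded away from the coordinate planes as $n$ and $B$ vary, so that both Theorem~\ref{th:eq3d} and the mesoscopic mixing input apply with constants independent of $n$; and (iii)~the integer discretization of the tangent plane must yield a ``good planar'' boundary condition in the sense of Definition~\ref{ass:eta} with a constant uniform in $n$. The complementary bookkeeping obstacle is the union bound, which spans $M=\wt O(L)$ recursion steps, $O(L^3)$ failure times, and $O(L^2/d_n^2)$ boxes, forcing each individual event to be controlled with probability better than $L^{-6}$; this forces the polylog exponent of $\Delta$ to be strictly larger than $1$ and is precisely why the final window in \eqref{eq:5} is $(\log L)^{3/2}$ rather than a pure $\log L$, and why the exponent $12$ in $T$ is the room needed to absorb the polylog losses along the recursion.
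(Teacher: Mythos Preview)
Your plan is essentially the paper's own argument (Proposition~\ref{lemma:duro}): the same spherical-cap recursion, the same inductive domination via an auxiliary mesoscopic dynamics with frozen boundary $\lfloor\partial\mathcal C_{u_n}\rfloor$, and the same combination of mesoscopic mixing plus Theorem~\ref{th:eq3d} followed by union bounds over space and time. The only cosmetic differences are that the paper takes the unit step $u_n-u_{n+1}=1$ (with $\rho_L=L\log L$, $u_0=2L$) and works point-by-point rather than via a covering by boxes.

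One point to tighten: the mesoscopic mixing inside each $B$ cannot come from Proposition~\ref{prop:wilson} alone, since that result needs both a floor and a ceiling with $H$ of polylogarithmic size; with ``no floor'' the height range in $B$ is of order $d_n$, giving $\tmix=\wt O(d_n^4)$, which is too slow. The paper closes this by first invoking Proposition~\ref{prop:quasisoff} (the initial datum $\lfloor\partial\mathcal C_{u_n}\rfloor$ restricted to $B$ is within $(\log d_n)^{3/2}$ of the tangent plane, so the unconstrained dynamics stays confined), and only then applying Proposition~\ref{prop:wilson} with $H=O((\log d_n)^{3/2})$. This is precisely the content of the ``Step~\ref{th:vago1}'' you invoke, so your argument is correct once the citation is completed to include Proposition~\ref{prop:quasisoff}. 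Similarly, the union bound over $[t_{n+1},L^3]$ is cleaner if done as in the paper (over all $O(L^5)$ transition times, using that for every fixed $t\ge t_{n+1}$ the auxiliary dynamics is within $e^{-c(\log L)^{3/2}}$ of its equilibrium) rather than via restarts from $\lfloor\partial\mathcal C_{u_{n+1}}\rfloor$, which would be circular at step $n+1$.
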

The second result says that once the surface is within distance $(\log
L)^{3/2}$ from $\Pi^{\bf n}$, it does not go much farther than that for a time
much longer than $L^{2}(\log L)^{12}$. This second step is much more
standard and its proof combines monotonicity and reversibility together with the fluctuation bounds of Theorem \ref{th:eq3d}. 
\begin{definition}
  \label{def:p=-}
Let $\Phi^+$ (resp. $\Phi^-$) be the maximal (resp. minimal)
configuration in the 
set
\[
\{\phi\in\Omega_{\eta,U}: \max_{x\in U} |\phi_x-\bar \phi^{\bf n}_x|\le 2(\log
    L)^{3/2}\}.
\]
\end{definition}

\begin{prop}
\label{prop:quasisoff}
  Let $\xi\in \Omega_{\eta,U}$ be such that
  \begin{eqnarray}
    \label{eq:7}
   \max_{x\in U} |\xi_x -\bar \phi^{\bf n}_x|\le (\log
L)^{3/2}.
  \end{eqnarray}
Then, there exists $c>0$ such that
\begin{eqnarray}
  \label{eq:8}
  \bbP\left(
\exists t<L^{10}:\phi^\xi(t)\notin \Omega_{\eta,U}^{\Phi^\pm}
\right) = O\left(e^{-c(\log L)^{3/2}}\right)
\end{eqnarray}
% and 
% \begin{eqnarray}
%   \label{eq:15}
%   \pi(\Omega_{\eta,U}^{ \Phi^\pm})\ge 1-O\left(e^{-c(\log L)^{3/2}}\right),
% \end{eqnarray}
with $\Omega_{\eta,U}^{ \Phi^\pm}$ given in \eqref{eq:9} and $\Phi^\pm$ as in Definition \ref{def:p=-}.
\end{prop}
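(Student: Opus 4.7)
The argument combines monotonicity, reversibility of the heat-bath, and the equilibrium fluctuation bound of Theorem \ref{th:eq3d}. By the hypothesis \eqref{eq:7} and Definition \ref{def:p=-} we have $\Phi^-\le\xi\le\Phi^+$, so the global monotone coupling yields $\phi^{\Phi^-}(t)\le\phi^\xi(t)\le\phi^{\Phi^+}(t)$ for every $t\ge 0$. Hence the event in \eqref{eq:8} is contained in
\[
\{\exists t<L^{10}:\phi^{\Phi^+}(t)\not\le\Phi^+\}\cup\{\exists t<L^{10}:\phi^{\Phi^-}(t)\not\ge\Phi^-\},
\]
and by the natural $+/-$ symmetry of the problem (exchanging the roles of the top and bottom configurations, and replacing $\bf n$ by a reflected slope) only the first event needs to be controlled.

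Next, I would discretize time by grid points $t_i=i$ for $i=0,\dots,L^{10}$, and use the Poisson tail bound: with probability at least $1-O(L^{12}e^{-c(\log L)^{3/2}})=1-O(e^{-c'(\log L)^{3/2}})$, no site of $U$ rings more than $(\log L)^{3/2}$ times in any single unit interval (union bound over the $\le|U|\cdot L^{10}\le L^{12}$ site/interval pairs). On this high-probability event the sup-norm increment of $\phi^{\Phi^+}$ over each $[t_i,t_{i+1}]$ is at most $(\log L)^{3/2}$, so it is enough to show
\[
\bbP\bigl(\exists i\le L^{10}:\phi^{\Phi^+}(t_i)\not\le\hat\Phi^+\bigr)=O(e^{-c(\log L)^{3/2}}),\qquad \hat\Phi^+:=\Phi^+-(\log L)^{3/2},
\]
where $\hat\Phi^+$ still lies at least $(\log L)^{3/2}$ above $\bar\phi^{\bf n}$.

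For this, Theorem \ref{th:eq3d} applied with $\epsilon=1/2$ and $a$ slightly below $2$ gives $\pi(\phi\not\le\hat\Phi^+)\le C\exp(-(2-o(1))c(\log L)^{3/2})$. A union bound applied to the stationary chain $\phi^\pi$ then yields $\bbP^\pi(\exists i\le L^{10}:\phi^\pi(t_i)\not\le\hat\Phi^+)=O(e^{-c(\log L)^{3/2}})$, the $L^{10}$ factor being harmless since it is polynomial while the equilibrium bound is superpolynomial in $L$. The passage from the stationary chain to the chain started from the deterministic $\Phi^+$ is handled by a reversibility plus monotonicity argument via the ceiling-reflected dynamics: this auxiliary heat-bath (with upward moves that would exceed $\Phi^+$ rejected) is reversible with respect to $\pi(\cdot\tc\phi\le\Phi^+)$, which by Theorem \ref{th:eq3d} is at total-variation distance $O(e^{-c(\log L)^{3/2}})$ from $\pi$; it also agrees pathwise with the free dynamics until the latter attempts an illegal move, an event whose probability is controlled precisely by the stationary bound just obtained.

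The main obstacle is this last step, the passage from a stationary trajectory estimate to a trajectory estimate with deterministic extremal initial condition $\Phi^+$. The naive reversibility bound $\bbP^{\Phi^+}(\phi(t)\in A)\le \pi(A)/\pi(\Phi^+)$ is useless because $\pi(\Phi^+)$ is exponentially small in the volume. Circumventing this requires the ceiling-reflection (censoring) construction described above, combined with the monotone coupling, and is the analogue for monotone surfaces of the Peres--Winkler-type arguments used elsewhere in the paper.
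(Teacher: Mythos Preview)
Your overall plan---monotonicity, time discretization, union bound at stationarity---is sound, but the step you flag as ``the main obstacle'' is in fact a genuine gap, and the ceiling-reflection argument you sketch does not close it. The ceiling-constrained dynamics with ceiling $\Phi^+$ is indeed reversible with respect to $\tilde\pi:=\pi(\cdot\tc\phi\le\Phi^+)$, and this is close to $\pi$; however, you are starting the ceiling dynamics from the deterministic configuration $\Phi^+$, which is the \emph{maximal} element of the constrained space, not from $\tilde\pi$. The monotone comparison then goes the wrong way: $\tilde\mu_t^{\Phi^+}\succeq\tilde\pi$, so for the increasing event $\{\phi\not\le\hat\Phi^+\}$ you obtain a \emph{lower} bound on its probability, not the upper bound you need. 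The sentence ``an event whose probability is controlled precisely by the stationary bound just obtained'' is therefore circular: you are using control on the stationary chain to bound an event for the chain started at $\Phi^+$, which is exactly the passage you are trying to justify.

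The paper resolves this with a different and quite simple device: instead of imposing a ceiling, one \emph{raises the boundary condition}. Set $\eta'_x=\eta_x+\lfloor(3/2)(\log L)^{3/2}\rfloor$ and sample the initial condition $\xi'$ from $\pi_U^{\eta'}(\cdot\tc A)$ with $A=\{\phi_U\ge\xi_U\}$. Since $\eta'$ is centered on the plane $\Pi^{\bf n}_{(3/2)(\log L)^{3/2}}$ while $\xi$ is within $(\log L)^{3/2}$ of $\Pi^{\bf n}$, Theorem~\ref{th:eq3d} gives $\pi_U^{\eta'}(A)=1-O(e^{-c(\log L)^{3/2}})$, so the conditioning is essentially harmless. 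Now by monotonicity (in \emph{both} initial data and boundary height) the $\eta'$-dynamics started from $\xi'$ dominates the $\eta$-dynamics started from $\xi$; and the former, once the conditioning on $A$ is removed, is exactly stationary with law $\pi_U^{\eta'}$ at every time. A union bound over the $O(L^{12})$ ring times then finishes the argument. The key idea you are missing is this shift of the boundary condition, which manufactures a dominating process that is genuinely stationary rather than merely reversible with respect to a measure close to $\pi$.
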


Finally, the last step  shows that if the surface evolves constrained
between a ceiling and a floor which are within distance $O((\log
L)^{3/2})$ from $\Pi^{\bf n}$, then mixing occurs within a time $\tilde O(L^2)$.
\begin{prop}\cite[Theorem 4.3]{CMST}
  \label{prop:wilson} Let $\phi^\pm\in \Omega_{\eta,U}$ with $\phi^-\le\phi^+$.
For the dynamics restricted to $\Omega_{\eta,U}^{ \phi^\pm}$
(cf. Section \ref{sec:flocei}) one has \[\tmix=O(L^2(\log L)^2 H^2)\]
where $H=\max_{x\in U}(\phi^+_x-\phi^-_x)$.
\end{prop}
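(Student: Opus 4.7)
The plan is to bootstrap Wilson's sharp coupling analysis of a \emph{non-local column dynamics} to the local single-flip chain by means of the Peres-Winkler censoring inequality. First I would introduce the heat-bath column dynamics on $\Omega_{\eta,U}^{\phi^\pm}$, in which at rate $1$ at each site $x\in U$ the value $\phi_x$ is resampled from its conditional equilibrium distribution given the other heights and the floor/ceiling. Wilson's argument from \cite{Wilson}, which couples two copies of the chain started at the extreme configurations $\wedge$ and $\vee$ and shows that the expected disagreement area decreases like a discrete mean-curvature supermartingale, yields the mixing bound $O(L^2\log L)$ for this column dynamics. The argument is essentially unaffected by the floor-ceiling constraints because the coupling remains monotone in the sense of Section \ref{sec:monoton} and the hitting-time estimate is purely spatial.

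Next I would compare the local single-flip dynamics to the column dynamics. Each column resample at a site $x$ is a uniform draw from an integer interval of length at most $H+1$; by elementary birth-and-death mixing estimates it can be reproduced, in total variation, by running the local heat-bath at $x$ with the neighbouring heights frozen for time of order $H^2\log H$. I would then construct a censoring schedule consisting of sweeps of length $\tau\simeq H^2\log H$, during each of which only updates at one prescribed site are kept, the sites being cycled in a fixed order. By the Peres-Winkler censoring inequality combined with the monotonicity recalled in Section \ref{sec:monoton}, the resulting censored dynamics started at $\wedge$ (resp.\ $\vee$) is stochastically above (resp.\ below) the equilibrium $\pi$, and its total variation distance to $\pi$ at time $T$ is dominated by that of the column dynamics after $T/\tau$ sweeps.

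Since Wilson's bound gives that the column dynamics is $(2e)^{-1}$-close to $\pi$ after $O(L^2\log L)$ sweeps, the censored local dynamics reaches the same accuracy after time $O(L^2 H^2 \log L\log H) = O(L^2(\log L)^2 H^2)$, and the original local dynamics, sandwiched monotonically between the two censored copies started at $\wedge$ and $\vee$, inherits the same bound. The main technical obstacle is carrying out the censoring comparison without losing an extra power of $L$: a standard Diaconis-Saloff-Coste path comparison would introduce precisely such a loss, and it is the interplay between Peres-Winkler censoring and monotonicity --- specifically, the fact that after each complete sweep the censored chain evolves stochastically monotonically toward $\pi$ --- that allows one to recover only the factor $H^2$ from the single-column birth-and-death mixing.
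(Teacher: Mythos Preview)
The paper does not prove this proposition; it is quoted directly from \cite[Theorem~4.3]{CMST}. Your overall strategy---Wilson's contraction bound for a non-local ``column'' (full heat-bath) dynamics, transferred to the single-flip chain via Peres--Winkler censoring---is exactly the approach of \cite{CMST}, and the ingredients you list are the right ones.

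There is, however, a genuine gap in your censoring schedule. You propose windows of length $\tau\simeq H^2\log H$ in each of which only \emph{one prescribed site} is active, cycled in a fixed order, and then assert that $O(L^2\log L)$ such single-site windows suffice. But Wilson's bound $O(L^2\log L)$ refers to the \emph{continuous-time} column dynamics with every site ringing at rate~$1$; equivalently, it is $O(L^2\log L)$ \emph{full sweeps} through all of $U$. Since $|U|$ is of order $L^2$, the sequential column chain (one site per step) needs $O(L^4\log L)$ steps to mix, and your schedule yields only $\tmix=O(L^4 H^2(\log L)^2)$---precisely the extra power of $L$ you say in your last paragraph that you wish to avoid.

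The correct schedule censors \emph{in parallel}. Two-colour $U$ by the checkerboard classes of $\bbZ^2$; in each half-epoch of length $\tau$ keep only the updates at sites of one colour. Because same-colour sites do not interact (each $\phi_x$ is constrained only by its four nearest neighbours), all sites of that colour run \emph{independent} birth-and-death chains on intervals of length $\le H$ and equilibrate simultaneously within time $\tau$. One full epoch therefore simulates one parallel sweep of the column dynamics, and Wilson's supermartingale argument---which bounds the contraction of $\sum_x w(x)(\phi^\wedge_x-\phi^\vee_x)$ per sweep---gives mixing after $O(L^2\log L)$ epochs, hence after time $O(L^2\log L)\cdot O(H^2\log H)=O(L^2 H^2(\log L)^2)$. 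This parallel (odd/even) scheme is precisely what is carried out in \cite{CMST} and, for the SOS model, in \cite{MS}; see Step~2 in Section~\ref{keyest} of the present paper for the analogous construction.
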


\medskip
We can now easily put together Propositions
\ref{prop:maxmin} to \ref{prop:wilson} to obtain the desired upper bound \eqref{eq:2}
on the mixing time:
\begin{proof}[Proof of Theorem \ref{th:3d}]
It is a standard fact that 
  \begin{eqnarray}
    \label{eq:11}
    \max_{\xi\in \Omega_{\eta,U}}\|\mu^\xi_t-\pi\|\le L^3 \max(\|\mu^\wedge_t-\pi\| ,\|\mu^\vee_t-\pi\| )
  \end{eqnarray}
(see e.g. \cite[Lemma 6.2]{CMST} for a similar statement) so it is
sufficient to prove that 
\[\max(\|\mu^\wedge_t-\pi\|
,\|\mu^\vee_t-\pi\|)\le 1/(2e L^3)\] for $t=L^2(\log L)^{12}=2 T$. Let us
consider e.g. the case of the maximal initial condition $\wedge$, the
other case being analogous. 

Define $\Omega'=\{\phi\in\Omega_{\eta,U}: \max_{x\in U} |\phi_x-\bar \phi^{\bf n}_x|\le (\log
    L)^{3/2}\}$ and, for $\xi\in \Omega'$,
\[
\tau=\inf\{t>0: \max_{x\in U}|\phi^\xi_x(t)-\bar \phi^{\bf n}_x
|\ge 2(\log L)^{3/2}-1\}.
\]
 Let $A$ be a subset of $\Omega_{\eta,U}$. 
Then, using Proposition \ref{prop:maxmin},
\begin{gather*}
  \label{eq:13}
  \mu^\wedge_{2T}(A)=\mu^\wedge_T(\mu^\xi_T(A)|\xi\in
  \Omega')+O\left(e^{-c(\log L)^{3/2}}\right).
\end{gather*}
 Next, from Proposition
\ref{prop:quasisoff}, one has for every $\xi\in\Omega'$
\begin{gather*}
  \label{eq:14}
  \mu^\xi_T(A)=\bbP(\phi^\xi(T)\in A;
  \tau>T)+O\left(e^{-c(\log L)^{3/2}}\right) =\bbP^{\Phi^\pm}(\phi^\xi(T)\in A;
  \tau>T)+O\left(e^{-c(\log L)^{3/2}}\right) \\=
\bbP^{\Phi^\pm}(\phi^\xi(T)\in A)+O\left(e^{-c(\log L)^{3/2}}\right) 
\end{gather*}
where $\bbP^{\Phi^\pm}$ denotes the law of the dynamics restricted to
the set $\Omega^{\Phi^\pm}_{\eta,U}$. Indeed, up to the random time $\tau$
the two dynamics $\bbP$ and $\bbP^{\Phi^\pm}$ can be perfectly coupled so that they coincide. In particular, 
$\tau$ has the same
law under $\bbP$ and $\bbP^{\Phi^\pm}$. Finally, thanks to 
Proposition \ref{prop:wilson}, $T$ is at least $(\log L)^2$ times
the mixing time of the restricted
dynamics (which is $O(L^2(\log L)^5)$). Therefore, from 
  \eqref{eq:10} and the fact that the invariant measure of the restricted dynamics is $\pi(\cdot|\Omega^{\Phi^\pm}_{\eta,U})$, one has 
  \begin{eqnarray}
    \label{eq:16}
    |\bbP^{\Phi^\pm}(\phi^\xi(T)\in A)-\pi(A|\Omega^{\Phi^\pm}_{\eta,U})|\le e^{-(\log L)^2}.
  \end{eqnarray}
Thanks to Theorem \ref{th:eq3d}  one has $\pi(\Omega^{\Phi^\pm}_{\eta,U})\ge 1-O(\exp(-c(\log L)^{3/2}))$ and finally
\[
|\mu^\wedge_{2T}(A)-\pi(A)|=O\left(e^{-c(\log L)^{3/2}}\right) 
\]
for every event $A\in\Omega_{\eta,U}$, which implies $\|\mu^\wedge_{2T}-\pi\|\le 1/(2e
L^3)$ for $L$ large enough.
\end{proof}

\medskip
\smallskip
As a warm-up, we start by proving the easier Proposition  \ref{prop:quasisoff}.
   \begin{proof}[Proof of Proposition \ref{prop:quasisoff}] Let
     $\xi\in \O_{\eta,U}$
    satisfy \eqref{eq:7}.
By monotonicity, if $\eta'\in\Omega$ and $\xi'\in \O_{\eta',U}$ are such that $\xi\le \xi'$ and $\eta\le \eta'$, then   
\begin{gather}
\label{eq:monoton}
 \bbP\left(
\exists t<L^{10}:\max_{x\in U}\left(\phi^\xi_x(t)-\bar\phi^{\bf n}_x\right)\ge 2(\log L)^{3/2}
\right) \\
\le
 \bbP'\left(
\exists t<L^{10}:\max_{x\in U}\left(\phi^{\xi'}_x(t)-\bar\phi^{\bf n}_x\right)\ge 2(\log L)^{3/2}
\right) 
  \end{gather}
where $\bbP'$ denotes the evolution with boundary condition
$\eta'$
(instead of $\eta$). In particular, this is the case if we set $\eta'_x=\eta_x+\lfloor (3/2)(\log L)^{3/2}\rfloor$
and $\xi'$
is sampled from the measure $\pi^{\eta'}_U(\cdot|A)$,
where $A$ is the event $A=A_{\eta',\xi,U}=
\{\phi\in \Omega_{\eta',U}:\phi_U\ge \xi_U\}$.
From Theorem \ref{th:eq3d} (applied with $\epsilon=a=1/2$) one sees that
 \begin{gather}
   \label{eq:20}
\|\pi^{\eta'}_U(\cdot|A)-\pi^{\eta'}_U\|=O\left(e^{-c(\log L)^{3/2}}\right)
 \end{gather}
for some $c>0$.
This is because $\eta'$ is within distance $C\log L$ from the plane $\Pi^{\bf n}_{(3/2)(\log L)^{3/2}}$ (cf. Definition \ref{def:nu})
while $\xi$ is within distance $(\log L)^{3/2}$ from   the plane $\Pi^{\bf n}$.
% This is because, if e.g. $y\in \partial U$, then (recall condition
% \eqref{eq:4})
% \[
% \mu_{{\bf n},h,\bar x}[\phi(y)]\ge \eta(y)+(3/2+o(1))(\log L)^{3/2}
% \]
% so that 
% \[
% \mu_{{\bf n},h,\bar x}[\phi(y)<\eta(y)]\le \mu_{{\bf n},h,\bar
%   x}[|\phi(y)- \mu_{{\bf n},h,\bar
%   x}(\xi'(y))|>
% (3/2+o(1))(\log L)^{3/2}
% ]=O\left(e^{-c(\log L)^{3/2}}\right).
% \]
% A similar argument works to bound $\mu_{{\bf n},h,\bar
%   x}(\phi(y)<\xi(y))$, $y\in U$ and then a union bound over $y\in
% U\cup \partial U$ implies \eqref{eq:20}.
Therefore, the probability in \eqref{eq:monoton} is upper bounded by 
\begin{eqnarray}
  \label{eq:21}
  \int \pi^{\eta'}_U(d\xi')\ \bbP'\left(
\exists t<L^{10}:\max_{x\in U}\left(\phi^{\xi'}_x(t)-\bar\phi^{\bf n}_x\right)\ge 2(\log L)^{3/2}
\right) +O\left(e^{-c(\log L)^{3/2}}\right).
\end{eqnarray}
The initial condition $\xi'$ in \eqref{eq:21} is sampled from $ \pi^{\eta'}_U$, which is the invariant measure of the dynamics 
$\bbP'$,
so that the distribution of $\phi^{\xi'}(t)$ coincides with $ \pi^{\eta'}_U$ at all later times.
Via a union bound over times and recalling the relation between $\eta$ and $\eta'$, the first term in \eqref{eq:21} is upper bounded by
\begin{gather}
\label{eq:unbou}
 \pi^{\eta'}_U\left(
\max_{x\in U}\left(\phi_x-\bar\phi^{\bf n}_x\right)\ge 2(\log L)^{3/2}
\right) \times
O(L^{10}\times L^2)\\
\le  \pi^{\eta}_U\left(
\max_{x\in U}\left(\phi_x-\bar\phi^{\bf n}_x\right)\ge (1/2)(\log L)^{3/2}-1
\right) \times
O(L^{10}\times L^2)
  \end{gather}
which is of order $\exp(-c(\log L)^{3/2})$, see Theorem \ref{th:eq3d}. %Note in
%fact that the height $\bar \phi_x$ is at distance 
%$(3/2+o(1))(\log L)^{3/2}$ away from the plane $\Pi_{(3/2)(\log L)^{3/2}}$.
The
factor $O(L^{10}\times L^2)$ is just the average number of Markov chain moves within time
$L^{10}$, since there are order of $L^2$ lattice points in $U$.
Similarly one bounds the probability that
$\min_{x\in U}(\phi^\xi_x(t)-\bar\phi^{\bf n}_x)<-2(\log L)^{3/2}$
for some $t\le L^{10}$ and claim \eqref{eq:8} is proven.
 \end{proof}

  \begin{proof}[Proof of Proposition \ref{prop:maxmin}]
We prove only Eq. \eqref{eq:5} since \eqref{eq:5bis}  is obtained
essentially in the same way.
    Let $W$ be a disk of radius \[\rho_L=L\times\log L\] on the plane
    $\Pi^{\bf n}_{C\log L}$ of slope ${\bf n}$ (cf. Definition \ref{def:nu}, with $C$ the same
    constant as in \eqref{eq:4}) such that 
its projection $V$ on the horizontal plane contains $U$ and moreover the distance between $\partial U$ and $\partial V$ is at least $\rho_L/2$
(recall that $U$ has diameter $L$). 

Given $u>0$, let  $\mathcal C_u$
be the spherical cap whose base is the disk $W$ and whose height is
$u$. The radius of curvature $R$ is related to $u$ and $\rho_L$ by
\begin{eqnarray}
\label{eq:12}
(2R-u)u=\rho_L^2
\end{eqnarray}
and, since we will always work under the condition $u\ll \rho_L\ll R$, we have 
$R=\rho_L^2/(2u)(1+o(1))$.
For a point $v$ on the curved portion of the boundary of $\mathcal
C_u$, let ${\bf n}_v$ be the normal at $v$ directed towards the
exterior of $\mathcal
C_u$. It is clear that,
if $u\ll\rho_L$, one has ${\bf n}_v={\bf
  n}+o(1)$; in particular, ${\bf n}_v>0$ with the convention of
Definition \ref{def:nu}.
Finally the height (w.r.t. the horizontal plane) of the spherical cap at horizontal coordinates
$x\in V$ is denoted by \[\psi_u(x)=\sup\{z\in \RR:
(x^{(1)},x^{(2)},z)\in \mathcal C_u\}.\]

We now define a sequence of spherical caps $\{\mathcal
C_{u_n}\}_{n=0}^M$ with constant base $W$, decreasing height $u_n$ and
increasing radius of curvature $R_n$. More precisely, let $u_0=2L$ and
$M:=2L-(\log L)^{5/4}$. Then we let
$u_n=u_{n-1}-1=2L-n$ and $(2R_n-u_n)u_n=\rho_L^2$.  
For later purposes we also define 
\[
t_n=t_{n-1}+R_{n}(\log L)^{17/2},\;\;t_0=0.
\]

 \begin{rem}
   \label{rem:odg}
It is worth noting that $R_0\sim L(\log L)^2/4$, $R_M\sim
L^2/(2(\log L)^{3/4})$ and $R_{n+1}/R_n= 1+o(1)$  uniformly 
in the whole
range $n=0,\ldots,M$.
 \end{rem}
 Recalling that $R_n=\rho_L^2/(2 u_n)(1+o(1))$, where $o(1)$ is small
uniformly in $1\le n\le M$, it is immediate to
deduce that
\begin{eqnarray}
  \label{eq:19}
  t_M=\rho_L^2(\log L)^{17/2}\times O\left(\sum_{n=1}^M\frac1{u_n}\right)=O(L^2(\log L)^{23/2}).
\end{eqnarray}
With this notation the key step is represented by the next Proposition.
\begin{prop} \label{lemma:duro}
 There exists a positive constant $c'$ such that the following holds for $L$ large enough.
  For every $0\le n\le M$ one has, with probability at least
$
1-n\,\exp({-c'(\log L)^{3/2}}),
$
  \begin{eqnarray}
    \label{eq:18}
    \phi^\wedge_x(t)\le \psi_{u_n}(x) \mbox{\;for every \;} x\in U
    \mbox{\;and every\;} t\in [t_n,L^3].
  \end{eqnarray}
\end{prop}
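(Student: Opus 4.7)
The plan is induction on $n$. The base case $n = 0$ is trivial: with $u_0 = 2L$ the spherical cap $\mathcal C_{u_0}$ has apex at height $2L$, well above any configuration in $\Omega_{\eta,U}$ since the good planar boundary condition \eqref{eq:4} and monotonicity force $\max_{x \in U}\phi_x = O(L)$. For the inductive step, let $\mathcal E_n$ denote the event in \eqref{eq:18}, assume it has probability at least $1 - n\,e^{-c'(\log L)^{3/2}}$, and work on $\mathcal E_n$; the goal is to show that $\mathcal E_n \cap \mathcal E_{n+1}^c$ has probability at most $e^{-c'(\log L)^{3/2}}$.

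The heart of the argument is a mesoscopic local analysis. Cover $U$ with boxes $B_j \subset \bbZ^2$ of side $\ell_n = \lfloor \sqrt{R_n}(\log L)^K \rfloor$ centered at points $x_j$, where $K$ is an exponent to be tuned large enough (in particular $2K$ much larger than $1 + \epsilon$). There are $O(L^2/\ell_n^2)$ such boxes. On the event $\mathcal E_n$, at every $s \in [t_n, L^3]$ one has $\phi^\wedge(s) \le \psi_{u_n}$ on $\partial B_j$, hence by the global monotone coupling of Section \ref{sec:monoton} the restriction of $\phi^\wedge$ to $B_j$ is dominated by an auxiliary dynamics $\tilde\phi^j$ on $B_j$ with frozen boundary $\tilde\eta_y = \lfloor \psi_{u_n}(y) \rfloor$ on $\partial B_j$ and maximal initial condition. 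Since $u_n \ll \rho_L$, the boundary $\tilde\eta$ is within $O(1)$ of a plane of slope ${\bf n}_{v_j}$ close to $\bf n$, where $v_j = (x_j, \psi_{u_n}(x_j))$, so Proposition \ref{prop:wilson} applies and gives mixing time $O(\ell_n^2 (\log L)^C)$ for $\tilde\phi^j$. Choosing $K$ appropriately, $t_{n+1} - t_n = R_{n+1}(\log L)^{17/2}$ exceeds $(\log L)^2$ times this mixing time, so at time $t_{n+1} - t_n$ the law of $\tilde\phi^j$ is within total variation $e^{-(\log L)^2}$ of the Gibbs measure $\tilde\pi_j$ on $B_j$.

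The crucial step is then to show $\tilde\pi_j(\phi_{x_j} \le \psi_{u_{n+1}}(x_j)) \ge 1 - e^{-c(\log L)^{3/2}}$. By convexity of the spherical cap, the boundary trace $\psi_{u_n}$ on $\partial B_j$ lies pointwise below the tangent plane $T_j$ to $\mathcal C_{u_n}$ at $v_j$, with gap $T_j - \psi_{u_n} = \Theta(\ell_n^2/R_n) = \Theta((\log L)^{2K})$ on a positive fraction of $\partial B_j$. By monotonicity, $\tilde\pi_j$ is stochastically dominated by the Gibbs measure with planar boundary $T_j$; comparing the latter with the infinite-volume translation invariant ergodic Gibbs state of slope ${\bf n}_{v_j}$ via the DLR property, in the spirit of the proof of Theorem \ref{th:eq3d}, one gets Gaussian-free-field-like height fluctuations of size $(\log \ell_n)^{1+\epsilon}$ at $x_j$ on top of a systematic downward shift of order $(\log L)^{2K}$ inherited from the convex boundary gap. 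Since $\psi_{u_n}(x_j) - \psi_{u_{n+1}}(x_j) = O(1) \ll (\log L)^{2K}$ uniformly in $x_j$, this yields $\tilde\phi^j_{x_j}(t_{n+1} - t_n) \le \psi_{u_{n+1}}(x_j)$ with the required probability. A union bound over the $O(L^2/\ell_n^2)$ boxes, together with a fine discretization of the time interval $[t_{n+1}, L^3]$ handled in the spirit of Proposition \ref{prop:quasisoff} to rule out between-inspection excursions, closes the inductive step. The main obstacle is precisely this quantitative drift estimate: the convex deviation of the cap from its tangent plane is only $\Theta(\ell_n^2/R_n)$, and converting it into a sharp pointwise drift on the mean height at $x_j$, large enough to dominate both the fluctuation $(\log \ell_n)^{1+\epsilon}$ and the cap-to-cap gap $O(1)$, is what pins down the choice $\ell_n \sim \sqrt{R_n}\,\mathrm{polylog}(L)$ and the polylogarithmic factor $(\log L)^{17/2}$ in $t_{n+1} - t_n$.
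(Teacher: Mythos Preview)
Your overall plan—induction on $n$, a mesoscopic local analysis via monotone coupling, and an equilibrium fluctuation bound—is the paper's. But two steps fail as written.

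First, you control $\phi^\wedge_{x_j}(t)$ only at the $O(L^2/\ell_n^2)$ box \emph{centers}, whereas $\mathcal E_{n+1}$ demands the inequality at \emph{every} $x\in U$; nothing lets you interpolate. The paper instead builds, for each $x\in U$ separately, a region $\tilde U=\tilde U(x)$: the projection of the disk where the tangent plane at $(x,\psi_{u_n}(x))$, translated down by $(\log L)^{3/2}$, intersects $\mathcal C_{u_n}$. This has diameter $O(\sqrt{R_n}(\log L)^{3/4})$ and, crucially, the boundary height $\wedge^{(n)}|_{\partial\tilde U}$ sits within $O(1)$ of that shifted plane $\tilde\Pi'$—a genuinely good planar boundary condition to which Theorem \ref{th:eq3d} applies directly. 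The point $(x,\psi_{u_{n+1}}(x))$ then lies $(1+o(1))(\log L)^{3/2}$ above $\tilde\Pi'$, which beats the $(\log\ell_n)^{1+\epsilon}$ fluctuations. The final union bound is over $O(L^2)$ points, absorbed by $e^{-c(\log L)^{3/2}}$. Your equilibrium argument is also miswired: dominating $\tilde\pi_j$ by the Gibbs measure with boundary $T_j$ discards the convex gap and yields no downward shift at all; to capture it you would have to dominate by the plane $T_j$ shifted down by the \emph{minimum} gap on $\partial B_j$.

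Second, Proposition \ref{prop:wilson} bounds only the mixing time of the floor/ceiling-restricted dynamics, by $O(\ell^2(\log\ell)^2 H^2)$; your $\tilde\phi^j$ has no such constraint, and the trivial $H=O(\ell_n)$ gives $\tmix=O(\ell_n^4)\gg t_{n+1}-t_n$. The paper (exactly as in the proof of Theorem \ref{th:3d}) first invokes Proposition \ref{prop:quasisoff} to confine the unconstrained dynamics to a band of height $H=O((\log L)^{3/2})$ with overwhelming probability, and only then applies Proposition \ref{prop:wilson}. But Proposition \ref{prop:quasisoff} requires a good planar boundary and an initial condition within $(\log L)^{3/2}$ of a plane; this is precisely why the paper chooses $\tilde U$ as above, and it is what breaks for your boxes $B_j$ with large $K$, since both $\psi_{u_n}|_{\partial B_j}$ and the initial datum $\wedge^{(n)}|_{B_j}$ deviate from any plane by $\Theta((\log L)^{2K})$.
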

Since $u_M\ll (\log L)^{3/2}$, Proposition \ref{lemma:duro} together with
\eqref{eq:19} imply the desired inequality \eqref{eq:5}.
 \end{proof}

 \begin{proof}[Proof of Proposition \ref{lemma:duro}]
   We prove the claim by induction on $n$. For $n=0$ this is trivial
since we chose $u_0=2L$ such as to guarantee that 
the maximal configuration $\wedge\in\Omega_{\eta,U}$ is below the function $U\ni
x\mapsto \psi_{u_0}(x)$.

Assume the claim for some $n$. For $x\in U$ define
the event
\[
A_x=\{\exists t\in [t_{n+1},L^3]:\phi^\wedge_x(t)> \psi_{u_{n+1}}(x)\}
\]
so we need to prove $\bbP(\cup_{x\in U}A_x)\le
(n+1) \exp(-c'(\log L)^{3/2})$. 
We have
\begin{gather}
%\nonumber
 \bbP(\cup_{x\in U}A_x)
\label{eq:gathero}
\le \sum_{x\in U}\bbP(A_x;\ \phi^\wedge(s)\le
  \psi_{u_n}
\mbox{\;for every\;} s\in[t_n,L^3])+n \,e^{-c'(\log L)^{3/2}}
\end{gather}
where we write $\phi^\wedge(s)\le
  \psi_{u_n}$ to mean that $\phi^\wedge_y(s)\le
  \psi_{u_n}(y)$ for every $y\in U$. 

Given $x\in U$, consider the plane $\wt \Pi$ 
tangent to $\mathcal C_{u_n}$ at the point
$(x^{(1)},x^{(2)},\psi_{u_n}(x))$ and the plane $\wt \Pi'$
obtained by translating downwards $\wt \Pi$  by
$(\log L)^{3/2}$. The intersection of $\wt \Pi'$ with
$\mathcal
C_{u_n}$ is a disk $\widetilde W$ of radius $O(\sqrt{R_n}(\log L)^{3/4})$, whose
projection on the horizontal plane we call $Z$. Let $\widetilde U\subset \ZZ^2$
be such that $\wt U\subset  Z$ and $\partial \wt U$ is at
distance of order $1$ from $\partial Z$, so that
of course $diam(\tilde U)=O(\sqrt{R_n}(\log L)^{3/4})$.

Let $\wedge^{(n)}\in\Omega$ be the maximal  monotone surface such that
$\wedge^{(n)}_x \le \psi_{u_n}(x)$ for every $x\in U$. Let $\tilde
\bbP$ denote the law of the auxiliary monotone surface dynamics in
$\wt U$, starting at time $t_n$ from $\wedge^{(n)}$ and with
boundary conditions $\wedge^{(n)}_{\partial \wt U}$.
By monotonicity and the definition of $\wedge^{(n)}$ we have
\[
\bbP\left(A_x;\ \phi^\wedge(s)\le
  \psi_{u_n}\mbox{\;for every\;} s\in[t_n,L^3]\right)\le \tilde \bbP\left(\exists t\in[t_{n+1},L^3]\mbox{\;such that\;}\phi_x^{\wedge^{(n)}}(t)\ge \psi_{u_{n+1}}(x)\right).
\]
As in the proof of Theorem \ref{th:3d}, Propositions \ref{prop:quasisoff}
and \ref{prop:wilson} show that after time
\[
t_{n+1}-t_n=R_{n+1}(\log L)^{17/2}\ge const\times\text{diam}(\tilde U)^2 (\log L)^7
\]
 the dynamics $\tilde \bbP$ has
a variation distance of order $\exp(-c(\log L)^{3/2})$ for some $c>0$
from  its equilibrium $\pi_{\tilde U}^{\wedge^{(n)}}$ (recall that
$c_-\log L\le \log R_n\le c_+ \log L$, cf. Remark \ref{rem:odg}).
Theorem \ref{th:eq3d} gives that 
\[
\pi_{\tilde U}^{\wedge^{(n)}}[\phi_x\le \psi_{u_{n+1}}(x)]\ge 1-O\left( e^{-c(\log L)^{3/2}}\right).
\]
Indeed, % the distance between $x$ and $\bar x$ is smaller than $L$ and 
the point $(x,\psi_{u_{n+1}}(x))$ is at distance $(1+o(1))(\log L)^{3/2} $
from the plane $\tilde\Pi'$ containing the ``planar''  boundary 
condition $\wedge^{(n)}_{\partial \tilde U}$.

Putting everything together (plus a union bound over times
$t\in[t_{n+1},L^3]$ as in \eqref{eq:unbou}) one gets 
\[
\bbP(A_x;\ \phi^\wedge(s)\le
  \psi_{u_n}\mbox{\;for every\;} s\in[t_n,L^3])= O\left( e^{-c(\log L)^{3/2}}\right)\times O(
  L^3\times L^2)=O\left( e^{-\frac c2 (\log L)^{3/2}}\right).
\]
Finally, provided that we choose $c'=c/2$, from \eqref{eq:gathero} we get
\begin{gather*}
   \bbP(\cup_{x\in U}A_x)\le (n+1) e^{-c'(\log L)^{3/2}}
\end{gather*}
(the union bound over $x\in U$ gives just an extra $O(L^2)$) which is the desired claim.
 \end{proof}

\section{SOS model: Proof of Theorem \ref{th:sos}}

The proof of Theorem \ref{th:sos} is based on the following crucial results. The first is essentially an application of the results in \cite{MS}\footnote{The results in \cite{MS} are stated for the
  \emph{discrete} time chain, and thus a trivial overall factor of $L$
  must be taken into account when comparing our results with theirs. Moreover, \cite{MS} only deals with the case $h=0$.}, which can be formulated as follows. 
  %\subsection{A key estimate}\label{keyest}
 Let $L$ and $0\leq h\leq L$ be fixed and define the set $\O^\kappa_{L,h}$ of configurations $\xi\in\O_{L,h}$ satisfying
\begin{equation}
  \label{eq:key1}
\big|\xi_i-\bar\eta_i\big|
 \leq \sqrt{L}\,(\log L)^{\kappa}, \qquad i=1,\dots,L\,,
\end{equation}
where, as before, $ \bar\eta_i=\lfloor ih/(L+1)\rfloor$.
\begin{prop}
\label{th:key}
There exist $\alpha_1>0$ and $c>0$ such that the  following holds for every $\kappa\ge 1$. 
For any $L$  sufficiently large (how large depending on $\kappa$), uniformly in $0\leq h\leq L$ and $\xi\in\O^\kappa_{L,h}$:
\begin{equation}
  \label{eq:key2}
\|\mu_t^\xi-\pi\|  \leq e^{-c(\log L)^{2}},\quad \forall t\geq L^2\,(\log L)^{\alpha_1}.
\end{equation}
Moreover, the same statement holds for the evolution constrained to
stay above the wall
$\bar \eta$
(in this case of course one must require also that $\xi\ge\bar \eta$).  \end{prop}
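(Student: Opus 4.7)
The plan is to reduce to a dynamics confined by a floor and a ceiling at distance $\wt O(\sqrt L)$ from $\bar\eta$, for which the ``vertical scale'' that enters the mixing-time analysis of \cite{MS} is $\wt O(\sqrt L)$ rather than $L$, trading the $\wt O(L^{5/2})$ bound of \cite{MS} for the desired $\wt O(L^2)$.

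First, set $\xi^\pm_i := \bar\eta_i \pm \lceil\sqrt L(\log L)^\kappa\rceil$ (truncated to $[-L,L+h]$ when necessary). The assumption $\xi\in\O^\kappa_{L,h}$ gives $\xi^-\le \xi\le \xi^+$, so by the monotone coupling of Section \ref{sec:monoton} it suffices to establish \eqref{eq:key2} for the two initial conditions $\xi^\pm$. Next, introduce the auxiliary dynamics with hard-wall constraints $\xi^-\le \eta\le \xi^+$, whose reversible measure is $\pi_{L,h}^{\xi^-,\xi^+}$. Standard SOS equilibrium estimates (Lemma \ref{lem:eq} below), applied with $\kappa$ large enough, yield $\pi_{L,h}(\exists i: |\eta_i-\bar\eta_i|>\sqrt L(\log L)^\kappa)\le e^{-c(\log L)^2}$, hence $\|\pi_{L,h}^{\xi^-,\xi^+}-\pi_{L,h}\|\le e^{-c(\log L)^2}$. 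Arguing exactly as in the proof of Proposition \ref{prop:quasisoff} (monotonicity, a union bound over the $O(L^{11})$ elementary updates occurring in a time $L^{10}$, and the equilibrium tail bound just written), the unconstrained dynamics started from $\xi^\pm$ remains inside the strip $[\xi^-,\xi^+]$ up to time $L^{10}$ with probability $1-e^{-c(\log L)^2}$, and on this event can be perfectly coupled with the constrained chain.

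It remains to bound the mixing time of the constrained chain by $L^2(\log L)^{\alpha_1}$. For this I would invoke the censoring plus Wilson's eigenfunction scheme of \cite{MS}. There, the $\wt O(L^{5/2})$ bound arises because the typical vertical scale over which the interface relaxes is $O(\sqrt L)$ and the relaxation time on that scale is $O(L^2)$; the extra factor $\sqrt L$ comes from the maximal initial excursion of order $L$. In the confined chain the maximal excursion is by construction already $\wt O(\sqrt L)$, so only the ``equilibration phase'' contributes, giving a mixing-time bound $\wt O(L^2)$. Upgrading this to the super-polynomial variation-distance estimate $e^{-c(\log L)^2}$ is routine via \eqref{eq:10}: iterating the mixing inequality over $(\log L)^2$ consecutive mixing times enlarges the power of $\log L$ in the time scale by a bounded amount, so a suitable choice of $\alpha_1$ works.

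The case of the dynamics constrained to lie above the wall $\bar\eta$ requires no new idea: one simply replaces $\xi^-$ by $\max(\xi^-,\bar\eta)$ throughout, and the monotonicity, equilibrium and censoring/Wilson arguments carry over unchanged. The main technical obstacle I anticipate is precisely the identification of the $\sqrt L$ factor in the $L^{5/2}$ bound of \cite{MS} as originating from the maximal height excursion rather than from some structural feature of the equilibrium itself; this is not a black-box quotation of \cite{MS} but requires revisiting their application of Peres--Winkler censoring and of Wilson's test function in the presence of the floor/ceiling, and verifying that the resulting constants are uniform in $0\le h\le L$ and in the floor/ceiling height.
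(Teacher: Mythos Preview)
Your reduction to the two extremal initial conditions and the coupling with a floor/ceiling dynamics (steps 1--3) is reasonable and parallels how the paper handles the monotone-surface case via Propositions~\ref{prop:quasisoff} and~\ref{prop:wilson}. However, step 4 --- bounding the mixing time of the confined chain by $\wt O(L^2)$ --- is where the entire content of Proposition~\ref{th:key} lies, and your justification there has a genuine gap.

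The specific problem is this: the censoring/Wilson scheme of \cite{MS} (Lemma~\ref{Lemma1} in the paper) requires the dynamics to start from a law $\pi(\cdot\mid A)$ with $D:=1+\log(1/\pi(A))$ controlling the epoch length $T=D^2(\log L)^9$. For the initial condition $\xi^+$ (or the event $\{\eta\ge\xi^+\}$), the boundary gradient $|\xi^+_1-\eta_0|\sim\sqrt L(\log L)^\kappa$ alone forces $\pi(A)\le e^{-\sqrt L(\log L)^\kappa}$, hence $D$ is of order $\sqrt L$ and one recovers at best $\wt O(L^3)$. Confining to a strip of height $\wt O(\sqrt L)$ does not help: the same boundary gradient persists, and the confined equilibrium still assigns probability $e^{-\Theta(\sqrt L(\log L)^\kappa)}$ to its maximal configuration. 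Your heuristic that ``the extra $\sqrt L$ factor in the \cite{MS} bound comes from the initial excursion of order $L$'' misidentifies the mechanism: the overhead is governed by the maximal \emph{gradient} arising in the censored dynamics, which is of the same order as the excursion and is not reduced by your confinement.

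The paper resolves this via an \emph{enlargement trick} (Lemma~\ref{lem:recur}, following \cite[Lemma~4.5]{MS}): one adjoins $\ell=H^2/(\log L)^2$ auxiliary sites on each side of $[1,L]$, so that in the enlarged system the event $A_H=\{\eta_i\ge\bar\eta_i+H\text{ on }[1,L]\}$ no longer forces a large boundary gradient and satisfies $\pi^{(\ell)}(A_H)\ge e^{-C(\log L)^2}$. Applying Lemma~\ref{Lemma1} to the enlarged system with $D=O((\log L)^2)$ then gives relaxation from level $H$ to level $H/2$ in time $L^2(\log L)^{16}$; iterating $O(\log L)$ times brings $H$ from $\sqrt L(\log L)^\kappa$ down to $(\log L)^2$, after which a final application of Lemma~\ref{Lemma1} concludes. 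This idea is absent from your proposal. Incidentally, once the enlargement trick is in hand, the preliminary confinement to a strip (your steps 1--3) becomes unnecessary: the paper works directly with the unconfined dynamics throughout, starting from $\nu_+=\pi(\cdot\mid \eta\ge\wedge_\kappa)$ and never introducing a floor/ceiling.
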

%\begin{proof}

From the proof it will follow that one can actually choose $\alpha_1=17$ (with room to spare).
The second crucial result deals with the relaxation of the extremal
evolutions (the analogous result for monotone surfaces is Proposition \ref{prop:maxmin}). 
Let $\eta^{\wedge,+}(t)$ denote the evolution of the maximal initial
configuration $\wedge\equiv L+h$,
with the wall constraint  $\eta_i\geq \bar\eta_i$, $i=1,\dots,L$. 
Similarly, let $\eta^{\vee,-}(t)$ denote
the evolution of the minimal initial configuration $\vee\equiv -L$, with the wall constraint 
$\eta_i\leq \bar\eta_i,\ i=1,\dots,L$.
\begin{prop}\label{prop1}
Let $\alpha_1>0$ be as in Proposition \ref{th:key}.
For all $L$ sufficiently large, and for some time $T_1=O(L^2\,(\log L)^{4+\alpha_1})$:
\begin{equation}  \label{eq:part1}
\bbP\big(\exists i\in\{1,\dots,L\}\,,\;
\eta_i^{\wedge,+}(T_1)>\bar\eta_i+\sqrt{L}\,(\log L)^{5}\big)\leq L^{-3},
%,\qquad T:=L^2\,(\log L)^{c_3}\,.
\end{equation}
and similarly, 
\begin{equation}  \label{eq:part01}
\bbP\big(\exists i\in\{1,\dots,L\}\,,\;
\eta_i^{\vee,-}(T_1)<\bar\eta_i-\sqrt{L}\,(\log L)^{5}\big)\leq L^{-3}.
%,\qquad T:=L^2\,(\log L)^{c_3}\,.
\end{equation}
\end{prop}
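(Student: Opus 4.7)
The plan is to mirror the inductive mean-curvature scheme used for monotone surfaces in Proposition \ref{prop:maxmin}, but with exponents adjusted to the SOS fluctuation scale $\chi_L\simeq\sqrt L$. We prove only \eqref{eq:part1}; \eqref{eq:part01} follows by the obvious reflection $\eta\mapsto -\eta$ which swaps floor and ceiling and the roles of $\wedge$ and $\vee$.

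\textbf{Geometric setup.} Fix $\rho_L = L(\log L)^2$ and consider, for $u>0$, the circular segment $\cC_u$ of height $u$ based on an interval $V\supset [0,L+1]$ of length $\rho_L$ lying on the straight line through the boundary heights $(0,0)$ and $(L+1,h)$. Let $\psi_u:V\to\RR$ denote the upper boundary of $\cC_u$, so that $\psi_u\ge \bar\eta$ on $[1,L]$. Define the sequences
\[
u_0 = 3L, \qquad u_{n+1}=u_n - \Delta_n,\qquad t_{n+1}=t_n+d_{\Delta_n}^{\,2}(\log L)^{\alpha_1},\quad t_0=0,
\]
with $\Delta_n$ the smallest integer such that $\Delta_n\ge (\rho_L^2/u_n)^{1/3}(\log L)^{5}$; elementary geometry (as in the strategy section) gives $d_{\Delta_n}\asymp (\rho_L^2/u_n)^{2/3}(\log L)^{\mathrm{const}}$. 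Let $M$ be the first $n$ with $u_n\le \sqrt L(\log L)^5$. The computation in Section \ref{sec:scale} specialized to $\gamma=1/2$ gives $t_M=O(L^2(\log L)^{4+\alpha_1})$, which will be our $T_1$.

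\textbf{Inductive domination statement.} The heart of the proof is to show, by induction on $n$, that for some $c>0$
\[
\bbP\Bigl(\exists\,t\in[t_n,L^3],\ \exists\,i\in[1,L]:\ \eta_i^{\wedge,+}(t)>\psi_{u_n}(i)\Bigr)\le n\,e^{-c(\log L)^2}.
\]
The base case $n=0$ holds trivially because $\wedge_i\equiv L+h\le \psi_{3L}(i)$. For the inductive step, we argue as in the proof of Proposition \ref{lemma:duro}: conditionally on the event that $\eta^{\wedge,+}(s)\le\psi_{u_n}$ for all $s\in[t_n,L^3]$, we bound the probability of exceeding $\psi_{u_{n+1}}$ at some site $i\in[1,L]$ by running an auxiliary dynamics. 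Pick a mesoscopic interval $\tilde U\subset[1,L]$ of length $d_{\Delta_n}$ centered near $i$; the tangent line to $\cC_{u_n}$ at the top of the segment, translated downwards by $\Delta_n$, has intersection of horizontal length $d_{\Delta_n}$ with $\cC_{u_n}$, and $\psi_{u_{n+1}}$ lies below this tangent line on $\tilde U$. Using monotonicity, we may replace the evolution on $\tilde U$ with the maximal monotone SOS interface in $\tilde U$ frozen to $\psi_{u_n}$ at the endpoints of $\tilde U$ and with wall $\bar\eta$ below, starting at time $t_n$ from the configuration $\psi_{u_n}\lceil\cdot\rceil$.

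\textbf{Applying Proposition \ref{th:key}.} The auxiliary dynamics on $\tilde U$ is conjugated (by subtracting the straight-line interpolation of its own boundary data) to a bounded SOS chain of length $|\tilde U|=d_{\Delta_n}$; its initial configuration is below the ceiling and within the deterministic bound $|\psi_{u_n}-\text{linear}|\le u_n \le |\tilde U|$, and in fact within $\sqrt{d_{\Delta_n}}(\log d_{\Delta_n})^{\kappa}$ for a suitable $\kappa$, since $\Delta_n\asymp\sqrt{d_{\Delta_n}}(\log d_{\Delta_n})^{\mathrm{const}}$ by our choice of $\Delta_n$. Hence Proposition \ref{th:key}, applied with $L\leftarrow d_{\Delta_n}$, gives total-variation distance $e^{-c(\log d_{\Delta_n})^2}=O(e^{-c'(\log L)^2})$ to the equilibrium $\pi$ on $\tilde U$ after time $t_{n+1}-t_n$. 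Finally, standard one-dimensional SOS equilibrium fluctuation estimates (the analogue of Theorem \ref{th:eq3d}; see Lemma \ref{lem:eq} in the paper) imply that under this equilibrium, the height at $i$ exceeds $\psi_{u_{n+1}}(i)$ with probability $\exp(-c(\log L)^{2})$, since the gap between the ceiling and $\psi_{u_{n+1}}$ at $i$ is $\Delta_n\ge \sqrt{d_{\Delta_n}}(\log L)^{5}$, well above the equilibrium standard deviation. A union bound over $i\in[1,L]$ and over a polynomial grid of times $t\in[t_{n+1},L^3]$ (combined with reversibility, as in the proof of Proposition \ref{prop:quasisoff}) closes the induction.

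\textbf{Main obstacle.} The critical place where the analysis could fail is the mesoscopic step: Proposition \ref{th:key} requires the initial configuration to lie within $\sqrt{\ell}(\log\ell)^\kappa$ of the straight line joining its boundary heights in a box of size $\ell$. Verifying this for the restriction of $\psi_{u_n}$ to the mesoscopic window $\tilde U$ is precisely what pins down the choice of $\Delta_n$; it works because the deviation of $\psi_{u_n}$ from its own chord over $\tilde U$ is of order $\Delta_n\asymp \sqrt{d_{\Delta_n}}(\log L)^{\mathrm{const}}$. Together with the wall constraint being preserved at every step by monotonicity, this makes the scheme self-consistent, and the telescoping time bound $t_M=O(L^2(\log L)^{4+\alpha_1})$ then delivers \eqref{eq:part1}.
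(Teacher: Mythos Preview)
Your proposal is correct and follows essentially the same route as the paper's proof (Sections~\ref{proof:sos}--\ref{proof of propC}): the same circular-segment caps $\cC_{u_n}$, the same recursion $u_{n+1}=u_n-\Delta_n$ with $\Delta_n\asymp(\rho_L^2/u_n)^{1/3}$polylog, the same mesoscopic auxiliary dynamics to which Proposition~\ref{th:key} is applied, and the same union bounds to close the induction. The only differences are cosmetic choices of constants ($\rho_L=L(\log L)^2$ versus $L\log L$, $u_0=3L$ versus $2L$, slightly different log-powers in $\Delta_n$ and in the error term $e^{-c(\log L)^2}$ versus $e^{-(\log L)^{3/2}}$), none of which affect the argument.
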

Once Proposition \ref{th:key} and Proposition \ref{prop1} are established, it is an easy task to complete the proof of Theorem \ref{th:sos}:

\smallskip

\noindent
{\em Proof of Theorem \ref{th:sos}}. 
%\subsection{From Proposition \ref{prop1} to Theorem \ref{th:sos}}
From a standard comparison estimate, cf.\ also \eqref{eq:11} above, one has
$$
\max_{\eta}\|\mu_t^\eta-\pi\| %\leq CL^2\|\mu_t^\wedge-\mu_t^\vee\|
\leq CL^2\max\big(\|\mu_t^\wedge-\pi\|,\|\mu_t^\vee-\pi\|\big)\,,
$$
where $\mu_t^\wedge,\mu_t^\vee$ denote the law of evolutions 
$\eta^\wedge(t),\eta^\vee(t)$ from maximal and minimal initial
condition respectively, with no wall constraint. Let $G$ denote the event that $|\eta_i-\bar\eta_i|\leq\sqrt{L}(\log L)^5$ for all $i$. Observe that for any event $A$, for $t>T_1$ (where $T_1$ is as in Proposition \ref{prop1}) %:=L^2(\log L)^{c}$:
\begin{align*}
|\mu_t^\wedge(A)-\pi(A)|&
\leq |\bbP(\eta^\wedge(t)\in A\,;\, \eta^\wedge(T_1)\in G) - \pi(A)| + \bbP(\eta^\wedge(T_1)\notin G) \\
& \leq \max_{\xi\in  G} \|\mu_{t-T_1}^\xi - \pi\| + 2\bbP(\eta^\wedge(T_1)\notin G).
\end{align*}
 By monotonicity one can couple the dynamics in such a way that $\eta^{\vee,-}(T_1)
 \leq \eta^\wedge(T_1)\leq \eta^{\wedge,+}(T_1)$. Thus, from Proposition \ref{prop1}, 
 $\bbP(\eta^\wedge(T_1)\notin G) \leq L^{-3}$ for all $L$ large enough. 
 On the other hand, from Proposition \ref{th:key} one has
 $
 \max_{\xi\in  G} \|\mu_{T}^\xi - \pi\| \leq e^{-c(\log L)^{2}}$, for $T:=L^2(\log L)^{\alpha_1}.
 $
 Thus, taking $t=T_1+T$, it follows that 
 $
 %\max_{\sigma}\|\mu_t^\sigma-\pi\|
 \|\mu_t^\wedge-\pi\|\leq 3L^{-3}\,,
 $
 for all $L$ large enough. 
The same bound, by symmetry, can be obtained for $\|\mu_t^\vee-\pi\|$. Therefore, $$
 \max_{\eta}\|\mu_t^\eta-\pi\|\leq 3CL^{-1}\,,
 $$
which concludes the proof of Theorem \ref{th:sos}, with any power 
$\alpha>\alpha_1+4$. \qed

%%%%%%%

\subsection{Proof of Proposition \ref{prop1}}\label{proof:sos}
This and the next subsection are devoted  to the proof of Proposition \ref{prop1} (by symmetry it is sufficient to prove \eqref{eq:part1} only) assuming the validity of Proposition \ref{th:key}.
The latter is proved in Section \ref{keyest} below.
We first recall a basic
equilibrium estimate. 
\begin{lemma}\label{lem:eq}
There exists some constant $C>0$ such that, uniformly in $0\leq h\leq L$ and $H\geq \sqrt{L}\log L$:
\begin{equation}  \label{eq:eq}
\pi\big[\exists \,i=1,\dots,L:\; \big|\xi_i-\bar\eta_i\big|>H\big] \leq C\exp{(-C^{-1}\min\{H^2/L,H\})}.
%\pi(B)\leq C\,L\,\exp(-(\log L)^2/C)\,,
\end{equation}
%for all $H>0$. 
Moreover, \eqref{eq:eq}  continues to hold as it is for the interface
conditioned to stay above the wall, as well as for the unbounded SOS model
defined in \eqref{eq:unbo}. %Section \ref{sec:gap}.
\end{lemma}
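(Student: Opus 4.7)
My plan is to establish the bound first for the unbounded model $\wt\pi$, and then transfer it to the bounded measure $\pi$ and to the walled measure via elementary inclusion/partition-function arguments. The starting observation is that, under $\wt\pi$, the gradients $X_i:=\eta_{i+1}-\eta_i$, $i=0,\ldots,L$, have the law of i.i.d.\ two-sided geometric variables with density $p(x)\propto e^{-|x|}$ on $\ZZ$, conditioned on $\sum_i X_i = h$. In other words, $(\eta_k)_{0\leq k\leq L+1}$ is a discrete random walk bridge from $0$ to $h$ of length $L+1$, and its fluctuations around $\bar\eta$ coincide (up to an additive $O(1)$) with those of a centered random walk bridge.

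I would first handle the tails of this bridge via an exponential tilt. Let $\lambda=\lambda(h/(L+1))$ be the unique solution of $\EE_\lambda[X_0]=h/(L+1)$, where $\EE_\lambda$ denotes expectation w.r.t.\ $p_\lambda(x)\propto e^{-|x|+\lambda x}$. The tilt factor $e^{\lambda\sum_i X_i}=e^{\lambda h}$ is constant on the conditioning event, so $\wt\pi=\bbP_\lambda^{\otimes(L+1)}(\cdot\tc\sum_i X_i=h)$. Because $|h/(L+1)|\leq 1$, the tilt $|\lambda|$ stays bounded below $1$, and the tilted increments retain uniformly bounded exponential moments and variance. A local CLT for lattice sums then gives $\bbP_\lambda(\sum_i X_i=h)\geq c/\sqrt L$ uniformly in $(L,h)$, whence
\[
\wt\pi(A)\leq c\sqrt L\,\bbP_\lambda(A)
\]
for every event $A$. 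Under $\bbP_\lambda$ the process $M_k:=\sum_{i<k}(X_i-h/(L+1))$ is a mean-zero random walk with sub-exponential increments, so a Bernstein maximal inequality plus a union bound over $k\in\{1,\ldots,L\}$ gives $\bbP_\lambda(\max_k|M_k|>H)\leq CL\exp(-c\min\{H^2/L,H\})$. The two regimes correspond to the Gaussian range ($H\leq L$) and the single-large-increment range ($H>L$). The aggregate prefactor $L^{3/2}$ is absorbed into the exponential as soon as $H\geq \sqrt L\log L$, since then $\min\{H^2/L,H\}\geq (\log L)^2$.

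To pass from $\wt\pi$ to $\pi=\wt\pi(\cdot\tc\Omega_{L,h})$, I would apply the estimate just obtained with $H$ of order $L/2$ to conclude $\wt\pi(\Omega_{L,h}^c)\leq e^{-cL}$, hence $\wt\pi(\Omega_{L,h})\geq 1/2$ and $\pi\leq 2\wt\pi$, so the desired bound transfers verbatim. For the measure conditioned on the wall $\{\eta\geq \bar\eta\}$, the lower-tail event $\{\eta_i<\bar\eta_i\}$ is empty by construction, while for the upper tail one uses
\[
\wt\pi\bigl(\exists i:\eta_i-\bar\eta_i>H\bigm|\eta\geq\bar\eta\bigr)\leq \wt\pi(\eta\geq\bar\eta)^{-1}\,\wt\pi\bigl(\exists i:\eta_i-\bar\eta_i>H\bigr).
\]
A standard random-walk-bridge reflection (ballot-type) argument yields $\wt\pi(\eta\geq\bar\eta)\geq c/L$ uniformly in $0\leq h\leq L$, so the extra factor $L$ is again absorbed by the exponential under $H\geq \sqrt L\log L$.

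The main technical obstacle is checking that the local CLT constant and the bound $\wt\pi(\eta\geq\bar\eta)\geq c/L$ are uniform in $0\leq h\leq L$: as $h/(L+1)$ approaches $\pm 1$ the tilt parameter $\lambda$ approaches the boundary of the analyticity strip of the moment generating function of $p$, so one must verify that $\Var_\lambda(X_0)$ stays bounded below and that the bridge stays nonnegative with at least inverse-polynomial probability. Both facts follow from routine but attentive computations using the explicit form of $p_\lambda$ and the classical cycle lemma for the bridge positivity probability.
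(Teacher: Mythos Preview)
Your proposal is correct and is precisely the ``standard large deviation argument'' the paper invokes without spelling out (it simply cites \cite[Appendix C]{MS}): view the interface under $\wt\pi$ as a random walk bridge with two-sided geometric increments, recentre via the exponential tilt, control the conditioning cost by the local CLT, and bound the maximal fluctuation by a Bernstein-type inequality; then pass to the bounded and walled models by dividing by the (polynomially bounded from below) probability of the constraint. Your caveat about uniformity in $h$ is well placed but not an obstacle: since $h/(L+1)\le L/(L+1)<1$ and $E_\lambda[X_0]\to+\infty$ as $\lambda\uparrow 1$, the tilt stays in a compact sub-interval of $(-1,1)$, so the variance and exponential moments of the tilted increments are uniformly controlled; the lower bound $\wt\pi(\eta\ge\bar\eta)\ge c/L^\gamma$ is of the same nature as the estimate \eqref{lowloc1} used later in the paper.
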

The proof of Lemma \ref{lem:eq} 
%\eqref{eq:eq} 
can be obtained by standard large deviation arguments as in e.g.\ \cite[Appendix C]{MS}. 
We give a detailed proof of Proposition \ref{prop1} in the case $h=0$ only. The reader may verify that the same approach with minor modifications gives a proof for all $0\leq h\leq L$. 
In order to underline the similarities
between the proof of Proposition \ref{prop1} with that of the companion result
Proposition \ref{prop:maxmin} in the monotone surface case, we follow
closely the notation introduced there. 

Let $\rho_L=L\log L$ and, for $u>0$, let $\cC_{u}\subset \bbR^2$, denote 
the circular segment of height $u$ and base a segment  on the
horizontal axis, with length $2\rho_L$ and
centered at $L/2$.
% (see Figure \ref{circleseg}). 
%Here and below 
We shall use the notation $\eta\in \cC_{u}$ for any configuration $\eta$
such that $0\leq \eta_i\leq \psi_u(i),
i=1,\ldots,L$ where $\psi_u(i)=\sup\{y\in \bbR:\ (i,y)\in \cC_u\}$. 
%(see
%Figure \ref{circleseg}). 
Note that the radius of curvature $R$ of the circular segment $\cC_{u}$ satisfies
$(2R-u)u=\rho_L^2$. As in Section \ref{3d}, in what follows we will
always have $u\ll \rho_L\ll R$ so that $R=\rho_L^2/(2u)(1+o(1))$. 
% \begin{figure}[h]
% \centerline{
% \psfrag{ell}{$\ell$}
% \psfrag{hi}{$\psi_i(\ell,v)$}
% \psfrag{i}{$i$}
% \psfrag{v}{$v$}
% \psfrag{R}{$R$}
% \psfrag{ellh}{$\sqrt{\ell}(\log\ell)^{4}$}
% \includegraphics[scale=1]{mon-surf}}
% \caption{The region $\bar\cC_{v}$ obtained as the union of the $\ell\times \sqrt{\ell}(\log\ell)^{4}$ rectangle and the circle segment $\cC_{\ell,v}$ with chord length $\ell$ and height $v$.}
% \label{circleseg}
% \end{figure}
Define recursively $u_0=2L, t_0=0$ and 
$$
u_{n+1}=u_n-(\rho_L^2/u_{n})^{1/3}(\log L)^2\,,\quad
t_{n+1} = t_n + (\rho_L^2/u_n)^{4/3}(\log L)^{3+\alpha_1}$$
and let $M=\min\{n:\ u_n\le  \sqrt{L}(\log L)^4\}$. Clearly, if $R_n$
denotes the radius of curvature of the circular segment $\cC_{u_n}$,
then $R_n=\rho_L^2/(2u_n)(1+o(1))$. 

Crucially, as in the monotone surface case, $t_M=\tilde O(L^2)$.
\begin{lemma}
For any $L$ large enough   $t_M\leq L^2\,(\log L)^{4+\alpha_1}$.
\end{lemma}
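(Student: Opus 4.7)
The plan is to reproduce, with SOS exponents, the same ``continuum approximation'' that the authors use in the monotone-surface computation leading to \eqref{eq:19}. In the heuristic of Section \ref{sec:scale} this corresponds to $\gamma=1/2$, and the key observation is that under the change of variables $v_n:=u_n^{4/3}$ the nonlinear recursion for $u_n$ becomes essentially arithmetic.

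First, I would verify that the relative decrement
$$\frac{u_n-u_{n+1}}{u_n}=\left(\frac{\rho_L^2}{u_n^4}\right)^{1/3}(\log L)^2$$
is $o(1)$ uniformly in $0\le n<M$: this follows from the stopping condition $u_n\ge \sqrt L(\log L)^4$ together with $\rho_L=L\log L$, which give a bound of order $(\log L)^{-8/3}$. A mean-value-theorem linearization of $x\mapsto x^{4/3}$ then yields
$$u_n^{4/3}-u_{n+1}^{4/3}=\bigl(\tfrac43+o(1)\bigr)(\log L)^2\rho_L^{2/3},$$
and telescoping produces $u_n^{4/3}=u_0^{4/3}-\bigl(\tfrac43+o(1)\bigr)(\log L)^2\rho_L^{2/3}\,n$, together with the rough bound $M=O\bigl(L^{4/3}/((\log L)^2\rho_L^{2/3})\bigr)$.

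Next, I would substitute this linear law into the definition of $t_M$ and compare the resulting sum with an integral:
$$t_M=(\log L)^{3+\alpha_1}\,\rho_L^{8/3}\sum_{n=0}^{M-1}\frac{1}{u_n^{4/3}}\le C\,(\log L)^{3+\alpha_1}\,\rho_L^{8/3}\int_0^M\!\frac{dn}{u_0^{4/3}-c\,(\log L)^2\rho_L^{2/3}n}.$$
The integral equals $(c(\log L)^2\rho_L^{2/3})^{-1}\log(u_0^{4/3}/u_M^{4/3})$; since $u_0^{4/3}\lesssim L^{4/3}$ and $u_M^{4/3}\gtrsim L^{2/3}$, the logarithm is $O(\log L)$. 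Collecting factors,
$$t_M=O\!\left(\rho_L^{8/3}\,(\log L)^{3+\alpha_1}\cdot\frac{1}{(\log L)\,\rho_L^{2/3}}\right)=O\bigl(\rho_L^{2}(\log L)^{2+\alpha_1}\bigr)=O\bigl(L^2(\log L)^{4+\alpha_1}\bigr),$$
which is the desired bound.

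The whole calculation is pure bookkeeping, and no significant obstacle is expected. The only point that genuinely uses the stopping condition $u_n\ge \sqrt L(\log L)^4$ is the linearization of $x\mapsto x^{4/3}$ near $u_n$, i.e.\ the estimate $u_{n+1}^{1/3}=u_n^{1/3}(1+o(1))$; this is automatic because $M$ is defined precisely as the first index at which the a priori lower bound on $u_n$ fails. As in the monotone-surface case, the final order of magnitude $\rho_L^2(\log L)^{2+\alpha_1}$ is driven by the logarithmic divergence of $\sum_n u_n^{-4/3}$, not by the number of terms $M$ itself.
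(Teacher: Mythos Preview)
Your plan is the paper's: both observe that $v_n:=u_n^{4/3}$ has nearly constant decrements $(\tfrac43+o(1))(\log L)^2\rho_L^{2/3}=:(\tfrac43+o(1))A$ and deduce $\sum_n v_n^{-1}=O(A^{-1}\log L)$. The paper obtains the decrement lower bound via the convexity inequality $a^{4/3}-b^{4/3}\ge b^{1/3}(a-b)$ and then telescopes \emph{from the top}, getting $v_n\ge\tfrac12(M-n)L^{2/3}(\log L)^{8/3}$ and hence $\sum_n v_n^{-1}\le 2L^{-2/3}(\log L)^{-8/3}\sum_{k=1}^M k^{-1}$.

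Your integral comparison, telescoped from $n=0$, has a small but genuine snag. To make $v_n\ge u_0^{4/3}-cAn$ hold for every $n$ you must take $c$ at least the maximal per-step decrement; the accumulated slack over $M\sim L^{2/3}(\log L)^{-8/3}$ steps is then of order $MA\cdot o(1)\sim L^{4/3}(\log L)^{-8/3}$, which dominates $u_M^{4/3}\sim L^{2/3}(\log L)^{16/3}$, so $u_0^{4/3}-cAM<0$ and the integral diverges. Conversely, choosing $c$ so that $u_0^{4/3}-cAM=u_M^{4/3}$ (the average decrement) makes the per-step inequality fail in general. The one-line fix is precisely the paper's: telescope from $n=M$ using only a \emph{lower} bound on the decrements. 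Alternatively, bypass the integral via $\sum_n v_n^{-1}\le (c_1A)^{-1}\sum_n\log(v_n/v_{n+1})=(c_1A)^{-1}\log(v_0/v_M)$, which lands directly on your final formula.
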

\begin{proof}
We write
\begin{equation}\label{tenn}
t_M=\sum_{n=0}^{M-1} (\rho_L^2/u_n)^{4/3}(\log L)^{3+\alpha_1}\,.
\end{equation}
Next observe that, by convexity, $a^{4/3}-b^{4/3}\geq b^{1/3}(a-b)$ for all $a>b>0$. Taking $a=u_n$ and $b=u_{n+1}$, and using $u_{n}-u_{n+1}=(\rho_L^2/u_n)^{1/3}(\log L)^2$ one has
$$
u_{n}^{4/3} - u_{n+1}^{4/3}\geq u_{n+1}^{1/3}(u_{n}-u_{n+1}) \geq \frac12 L^{2/3}(\log L)^{8/3}\,,
$$
where the last bound follows from $u_{n+1}\geq \frac12 u_n$ which is easily seen to be implied by the assumption 
$u_n\geq \sqrt{L}(\log L)^4$ for all $0\leq n \le M-1$. Therefore,
\begin{gather*}
u_{n}^{4/3}= u_{M-1}^{4/3}+\sum_{m=n}^{M-2}(u_{m}^{4/3} - u_{m+1}^{4/3})\\\geq 
u_{M-1}^{4/3}+\frac12(M-n-1)L^{2/3}(\log L)^{8/3}\geq\frac12(M-n)L^{2/3}(\log L)^{8/3} \,,
\end{gather*}
the last bound following from $u_{M-1}^{4/3}\ge L^{2/3}(\log
L)^{8/3}$.
In conclusion, using this bound in \eqref{tenn}, 
$$
t_M\leq 2L^{2}(\log L)^{3+\alpha_1}\sum_{n=0}^{M-1}\frac1{M-n}\leq 2L^2\,(\log L)^{4+\alpha_1}\,,
$$
whenever $L$ is large enough. 
\end{proof}
\begin{rem}
\label{scale}As the careful reader has noticed, the length and time scales
$(u_n,t_n)$ are quite different from their analogue in the monotone
surface case. The main reason is the different order of magnitude of
the maximal equilibrium fluctuations in the two models: $\log L$ versus
$\sqrt{L}$. However their value is determined by the common recipe
which was described in Section \ref{sec:scale}. \end{rem}
The key step in the proof of \eqref{eq:part1} is analogous to
Proposition \ref{lemma:duro}.
\begin{prop} \label{propC}
For any $L$ large enough and
  for every $0\le n\le M$, with probability at least $1-n\,e^{-(\log L)^{3/2}}$,
  \begin{eqnarray*}
    \eta^\wedge(t)\in\cC_{u_n}
    \mbox{\;for every\;} t\in [t_n,L^3].
  \end{eqnarray*}
\end{prop}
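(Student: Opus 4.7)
The plan is to prove Proposition \ref{propC} by induction on $n$, closely following the blueprint of Proposition \ref{lemma:duro}, with the monotone-surface mixing ingredients (Propositions \ref{prop:quasisoff}--\ref{prop:wilson}) replaced by the SOS mesoscopic mixing result Proposition \ref{th:key}, and the fluctuation bound Theorem \ref{th:eq3d} replaced by its SOS analogue Lemma \ref{lem:eq}. The base case $n=0$ is trivial: since $u_0=2L$, elementary geometry gives $\psi_{u_0}(i)\geq L=\wedge_i$ for every $i\in[1,L]$.

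For the inductive step, assume the claim at level $n$ and, for each $i\in\{1,\ldots,L\}$, set $A_i:=\{\exists t\in[t_{n+1},L^3]:\eta_i^\wedge(t)>\psi_{u_{n+1}}(i)\}$. The inductive hypothesis and a union bound reduce the proof to showing
\[
\bbP\bigl(A_i;\,\eta^\wedge(s)\in\cC_{u_n}\ \forall s\in[t_n,L^3]\bigr)\leq L^{-1}e^{-(\log L)^{3/2}}
\]
for every $i$. I would construct a mesoscopic window tailored to $i$ as follows: consider the tangent line to $\partial\cC_{u_n}$ at $(i,\psi_{u_n}(i))$, translate it vertically downward by $2\Delta_n$, and let $\widetilde U\subset[1,L]\cap\bbZ$ be an integer interval of length $\ell\asymp d_{\Delta_n}$ contained in the horizontal projection of its intersection with $\cC_{u_n}$. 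Denote by $\widetilde\wedge^{(n)}$ the maximal SOS configuration lying pointwise below $\psi_{u_n}$, and let $\widetilde\bbP$ be the law of the auxiliary SOS chain on $\widetilde U$ started at time $t_n$ from $\widetilde\wedge^{(n)}|_{\widetilde U}$, with frozen boundary values $\widetilde\wedge^{(n)}|_{\partial\widetilde U}$. Global monotone coupling and the inductive hypothesis give $\eta^\wedge_i(t)\leq\tilde\eta_i(t)$ for all $t\in[t_n,L^3]$.

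The boundary data $\widetilde\wedge^{(n)}|_{\partial\widetilde U}$ lie within $O(1)$ of the translated tangent line, whose slope is $|\psi_{u_n}'(i)|=O(1/\log L)$ in our parameter regime. After an affine shift of heights, the auxiliary chain therefore becomes a bounded SOS chain of horizontal size $\ell$ with effective boundary slope $o(1)$, whose initial datum sits at vertical distance at most $2\Delta_n$ from the straight line joining its boundary values. Since $2\Delta_n=O(\sqrt{\ell}\,(\log L)^{3/2})$ and $\log\ell\asymp\log L$, this initial datum belongs to $\O^\kappa_{\ell,h'}$ with $\kappa=3/2$, so Proposition \ref{th:key} applies. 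In combination with \eqref{eq:10} and a polynomial union bound over $t\in[t_{n+1},L^3]$, and using the slack $t_{n+1}-t_n=\ell^2(\log L)^{1+\alpha_1}\gg\ell^2(\log L)^{\alpha_1}$, this yields that $\widetilde\bbP$ is uniformly within total variation distance $e^{-c(\log L)^3}$ of its invariant measure $\pi_{\rm aux}$.

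It remains to show $\pi_{\rm aux}(\tilde\eta_i>\psi_{u_{n+1}}(i))\leq e^{-c(\log L)^3}$. The key geometric input is that the vertical gap between $\psi_{u_{n+1}}(i)$ and the translated tangent at position $i$ equals $2\Delta_n-(\psi_{u_n}(i)-\psi_{u_{n+1}}(i))\geq\Delta_n$, by the elementary inequality $\psi_{u_n}-\psi_{u_{n+1}}\leq u_n-u_{n+1}=\Delta_n$. Since $\Delta_n=\sqrt{\ell}\,(\log L)^{3/2}$, Lemma \ref{lem:eq} at scale $\ell$ with $H=\Delta_n$ gives tail probability $\exp(-\Delta_n^2/\ell)=\exp(-(\log L)^3)$, closing the induction. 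The principal technical obstacle, and the reason the argument requires slightly more care than in the monotone-surface case, is precisely this geometric matching: because $\Delta_n$ is chosen at the same order of magnitude as the critical equilibrium fluctuation at scale $d_{\Delta_n}$, one cannot translate the tangent merely by $\Delta_n$ (which would leave no gap at the tangent point); the doubling to $2\Delta_n$ produces a cushion of order $\Delta_n$ which, thanks to the $\sqrt{\log L}$ ratio between $\Delta_n$ and $\sqrt{\ell}\log L$, still dominates the equilibrium fluctuations with a comfortable margin.
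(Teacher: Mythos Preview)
Your proposal is correct and follows essentially the same route as the paper's proof: induction on $n$, a mesoscopic window obtained by translating the tangent line at $(i,\psi_{u_n}(i))$ downward by $2(u_n-u_{n+1})$, an auxiliary SOS dynamics on that window started from the maximal configuration below $\psi_{u_n}$, monotone comparison, Proposition~\ref{th:key} with $\kappa=3/2$ for the mesoscopic mixing, and Lemma~\ref{lem:eq} with $H\asymp\Delta_n$ for the equilibrium tail. One small slip: Proposition~\ref{th:key} only delivers total-variation distance $e^{-c(\log L)^2}$, not $e^{-c(\log L)^3}$, and the appeal to \eqref{eq:10} neither helps nor is needed here (you do not control $\tmix$ of the auxiliary chain, only the distance from equilibrium for initial data in $\Omega^\kappa$); however, $e^{-c(\log L)^2}$ already absorbs the polynomial union bound over $i\in[1,L]$ and jump times in $[t_{n+1},L^3]$, exactly as in the paper.
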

If we now apply Proposition \ref{propC} with $n=M$ and use the fact
that $t_M=O(L^2\,(\log L)^{4+\alpha_1})$ and $u_M\le \sqrt L (\log
L)^5$, we obtain the desired claim  \eqref{eq:part1}.
\qed

\subsection{Proof of Proposition \ref{propC}}
\label{proof of propC}

As in the proof of Proposition \ref{lemma:duro} we proceed by
induction in $n\le M$. The initial step $n=0$ is obvious because the
maximal configuration $\wedge$ is inside $\cC_{u_0}$. 
Thus let us assume the statement true for $n<M$ and let us prove it
for $n+1$.

For $1\le i\le L$ define
the event
\[
A_i=\{\exists\, t\in [t_{n+1},L^3]:\eta^\wedge_i(t)> \psi_{u_{n+1}}(i)\}.
\]
Using the inductive assumption we may write
\begin{gather}
%\nonumber
 \bbP(\cup_{i=1}^L A_i)
\label{eq:gatherosos}
\le \sum_{i=1}^L\bbP(A_i;\ \eta^\wedge(s)\in \cC_{u_n}
\mbox{\;for every\;} s\in[t_n,L^3])+n \,e^{-(\log L)^{3/2}}.
\end{gather}
Fix $i=1,\ldots,L$ and consider the line $\mathbb L$ tangent to
$\cC_{u_n}$ at the point
$(i,\psi_{u_n}(i))$ and the line  $\mathbb L'$
obtained by translating downwards (i.e. in the $-y$ direction)
$\mathbb L$  by
\[2(u_n-u_{n+1})= 2(\rho_L^2/u_{n})^{1/3}(\log L)^2.\]
Let us denote by $x_-,x_+$ the horizontal coordinates of the leftmost and
rightmost  points of $\mathbb L'\cap \cC_{u_n}$ and let $I$ be the set
of integers in $[x_-,x_+]$. Clearly $|I| = O((\rho^2_L/u_n)^{2/3}\log
L)$. 

Consider now the SOS dynamics in the interval $I$ with boundary conditions
equal to $\lfloor \psi_{u_n}(i_\pm)\rfloor $ at the left and right boundary $i_\pm$ of
$I$ and floor at zero height. This auxiliary evolution starts at time $t_n$ from the maximal
configuration $\wedge^{(n)}$ in the set of $\eta\in[0,L]^I$ such that $\eta_i\le \psi_{u_n}(i)$
for any $i\in I$.  We denote by $\bbP'$ the law of this auxiliary chain.
  Observe that $\wedge^{(n)}$ is within distance
$2(u_n-u_{n+1})= O(\sqrt{|I|}(\log |I|)^{3/2})$   from
the line $\mathbb L'$ so that Proposition \ref{th:key} will be
applicable with $\kappa=3/2$.
By monotonicity we have
\[
\bbP\left(A_i;\ \eta^\wedge(s)\in \cC_{u_n} \mbox{\;for every\;} s\in[t_n,L^3]\right)\le \bbP'\left(\exists t\in[t_{n+1},L^3]\mbox{\;such that\;}\eta_i^{\wedge^{(n)}}(t)\ge \psi_{u_{n+1}}(i)\right).
\]
Because of Proposition \ref{th:key}, after time
$|I|^2(\log|I|)^{\alpha_1}\le t_{n+1}-t_n$ the dynamics $ \bbP'$ has
a variation distance of order $\exp(-c(\log L)^{2})$ from  its
equilibrium which we denote by $\pi_I^{(n)}$. Since the distance
between the point 
$(i,\psi_{u_{n+1}}(i))$ and the line $\mathbb L'$ is at least $c
\sqrt{|I|}(\log |I|)^{3/2}$ for some $c>0$,   
Lemma \ref{lem:eq} 
gives that 
\[
\pi_I^{(n)}\left(\eta_i\le \psi_{u_{n+1}}(i)\right)\ge 1-O\left(
  e^{-c(\log |I|)^{3}}\right)\ge 1- e^{-c' (\log L)^3}.
\]
As in the proof of Proposition \ref{lemma:duro}, simple union bounds
over $i\in [1,L]$ and $t\in[t_{n+1},L^3]$ give 
\[
\sum_{i=1}^L\bbP(A_i;\ \eta^\wedge(s)\in \cC_{u_n}
\mbox{\;for every\;} s\in[t_n,L^3]) \le L^5 e^{-c''(\log L)^2}\le
e^{-(\log L)^{3/2}}
\]
which finishes the proof of the inductive step.
\qed

%\begin{figure}[b]
%\centerline{
%\psfrag{ell'}{$\ell'$}
%\psfrag{Pi}{$P_i$}
%\psfrag{i-}{$i_-$}
%\psfrag{i}{$i$}
%\psfrag{i+}{$i_+$}
%\psfrag{v}{$v$}
%\psfrag{v'}{$v'$}
%\psfrag{R}{$R$}
%\psfrag{ellh}{$\sqrt{\ell}(\log\ell)^4$}
%\psfig{file=SAVE.eps,height=1.5in,width=4.0in}}
%\caption{The circle segment with parameters $(\ell',v')$
%and the points $i_-$ and $i_+$ determined by it. The picture corresponds to the case where $i$ satisfies \eqref{centralregion}.}
%\label{circleseg1}
%\end{figure}

%\begin{figure}[h]
%\centerline{
%\psfrag{ell'}{$\ell'$}
%\psfrag{Pi}{$P_i$}
%\psfrag{i-}{$i_-$}
%\psfrag{i}{$i$}
%\psfrag{i+}{$i_+$}
%\psfrag{v}{$v$}
%\psfrag{v'}{$v'$}
%\psfrag{R}{$R$}
%\psfrag{h*}{$h_*$}
%\psfig{file=cricleseg2.eps,height=1.5in,width=4.0in}}
%\caption{The circle segment with parameters $(\ell',v')$ in the case where $i$ satisfies $i\in[0,\frac12,\ell']$. }
%\label{circleseg2}
%\end{figure}

\subsection{Proof of Proposition \ref{th:key}}\label{keyest}
We describe the main steps in the absence of the wall. At the end we
will comment on the case with the wall. 

Let $\xi^{\wedge_\kappa}(t),\xi^{\vee_\kappa}(t)$ denote the Glauber dynamics at time $t$
starting from the maximal $\wedge_\kappa$ and
minimal $\vee_\kappa$ initial
condition in $\O^\kappa_{L,h}$, respectively,
and let $\mu_t^{\wedge_\kappa},\mu_t^{\vee_\kappa}$ denote
their distribution. Notice that
  $\wedge_\kappa,\ \vee_\kappa$ are different from the maximal $\wedge$ and minimal
  $\vee$ configuration  in $\O_{L,h}$. For any $\xi \in \O^\kappa_{L,h}$, for any event $A\subset \O_{L,h}$ one has:
  $$\mu_t^\xi(A)-\pi(A) =
  \sum_{\eta\in\O_{L,h}} \pi(\eta)( \mu_t^\xi(A)-\mu_t^\eta(A)).
  $$
 Therefore, 
%\begin{gather*}
$$  \max_{\xi \in \O^\kappa_{L,h}}\|\mu_t^\xi-\pi\|\le
  \pi(\O_{L,h}\setminus \O^\kappa_{L,h})+ \max_{\xi,\,\eta \in
    \O^\kappa_{L,h}}\|\mu_t^\xi-\mu_t^\eta\|.
$$
    %\\ \le
%\pi(\O_{L,h}\setminus \O^\kappa_{L,h}) +
%\bbP(\xi^{\wedge_\kappa}(t)\neq \xi^{\vee_\kappa}(t))\\
%\le Ce^{-(\log L)^2/C} + \bbP(\xi^{\wedge_\kappa}(t)\neq \xi^{\vee_\kappa}(t))
%\end{gather*}
By monotonicity, $\max_{\xi,\,\eta \in
    \O^\kappa_{L,h}}\|\mu_t^\xi-\mu_t^\eta\|\leq \bbP(\xi^{\wedge_\kappa}(t)\neq \xi^{\vee_\kappa}(t))$, 
where $\bbP(\cdot)$ denotes the global monotone coupling (see Section \ref{sec:monoton}).
Using 
\eqref{eq:eq} to estimate $\pi(\O_{L,h}\setminus \O^\kappa_{L,h})$ for any $\kappa\geq 1$, we arrive at 
$$
\max_{\xi \in \O^\kappa_{L,h}}\|\mu_t^\xi-\pi\|\le
Ce^{-(\log L)^2/C} + \bbP(\xi^{\wedge_\kappa}(t)\neq \xi^{\vee_\kappa}(t)).
$$
Let now $\nu_+:=\pi(\cdot \tc \xi \ge \wedge_\kappa)$ and
$\nu_-:=\pi(\cdot \tc \xi \le \vee_\kappa)$. Monotonicity implies that 
\begin{gather*}
\bbP(\xi^{\wedge_\kappa}(t)\neq \xi^{\vee_\kappa}(t))\le
\sum_{i=1}^L\sum_{j=-L}^{L+h}\bbP(\xi_i^{\wedge_\kappa}(t)\ge j\,;\,\xi_i^{\vee_\kappa}(t)<j)= 
\sum_{i=1}^L\sum_{j=-L}^{L+h}\left[\mu_t^{\wedge_\kappa}(\xi_i\ge j)-\mu_t^{\vee_\kappa}(\xi_i\geq
  j)\right]\\
\le 2L(L+h) \left(\|\mu_t^{\nu_+}-\pi\|+\|\mu_t^{\nu_-}-\pi\|\right)=4L(L+h)\|\mu_t^{\nu_+}-\pi\| 
\end{gather*}
where $\mu_t^{\nu_\pm}$ is the law of the process at time $t$ starting
from the distribution $\nu_\pm$. Above we used symmetry to conclude that
$\|\mu_t^{\nu_+}-\pi\|=\|\mu_t^{\nu_-}-\pi\|$. 

Since the
event $\{\xi \ge \wedge_\kappa\}$ is increasing, one has 
$\pi \preceq \nu_+$ and the relative density $\nu_+(\xi)/\pi(\xi)$ is an increasing
function. The Peres-Winkler censoring argument \cite{notePeres}
(see also Lemma 2.2 in \cite{MS}) proves that 
the same holds if we replace $\nu_+$ with $\mu_t^{\nu_+}$. Thus the
event $F=\{\mu_t^{\nu_+}(\xi)\ge \pi(\xi)\}$ is increasing and 
$\|\mu_t^{\nu_+}-\pi\|=\mu_t^{\nu_+}(F)-\pi(F)$. 

Next, we claim that, for 
some constants $\alpha_1\le 17$ and $c>0$, for all $t\ge L^2\,(\log
L)^{\alpha_1}$ and for any
    increasing event $A$,
\begin{equation}
  \label{eq:key3}
  \mu_t^{\nu_+}(A)-\pi(A)\le e^{-c(\log L)^2}.
\end{equation}
%for some constant $c>0$.
Clearly \eqref{eq:key3} finishes the proof of Proposition \ref{th:key}. In turn, the proof of
\eqref{eq:key3} is adapted from the proof of a similar result in \cite{MS}
(see Lemma 4.5 there). For the reader's convenience, we now
describe its main steps.\\

\noindent
{\em Step 1.} One first introduces an auxiliary \emph{parallel column dynamics} which consists
in replacing the single-site moves of the Glauber dynamics \eqref{ppm}
with \emph{non-local moves} as follows. At each integer time
$s=1,2,\dots$ firstly all even-numbered heights
$\{\xi_{2i}\}$, $i=1,2,\dots$, are re-sampled from their equilibrium distribution given the
neighboring odd-numbered heights $\xi_{2i+1}$, $i=0,1,\dots$, and then
viceversa with the role of even/odd columns reversed. 
By construction the
column chain is ergodic and reversible w.r.t. $\pi$.  Moreover, and
that is %really 
the main reason for introducing it, 
%the so-called 
Wilson's argument \cite{Wilson} applies to the new chain. Namely,
by an almost exact computation, one can prove  that under the column dynamics the maximal and minimal configuration in $\O_{L,h}$
couple in a time $O(L^2\log L)$ (see Section 3 in
\cite{MS}). \\

\noindent
{\em Step 2.} Next, one relates the
single site Glauber dynamics started from $\nu_+$ to the column
dynamics described above by means of the
Peres-Winkler censoring idea. More precisely, given a time-lag $T$,
one splits the time interval $[0,t]$ into $N=t/2T$ epochs each of
length $2T$ (we assume for simplicity $N\in \bbN$) and in each epoch one first censors (i.e.\ freezes) all the Poisson clocks
at odd sites for the first half of the epoch and then all the even
sites for the second half. If $\tilde \mu_t^{\nu_+}$ denotes the
distribution of the censored chain, then the censoring inequality  of
\cite{notePeres} says that $\mu_t^{\nu_+}\preceq \tilde
\mu_t^{\nu_+}$ so that, for any increasing event $A$, 
\[
\mu_t^{\nu_+}(A)-\pi(A)\le \tilde \mu_t^{\nu_+}(A)-\pi(A)\le \|\tilde
\mu_t^{\nu_+}-\pi\|.
\] 
If the free parameter $T$ is chosen in
such a way that each epoch simulates very closely one step of the
column chain then, thanks to Step 1, 
\[
\|\tilde\mu_t^{\nu_+}-\pi\|\le e^{-c(\log L)^2}
\]
provided that $N\geq L^2(\log L)^3$.  

As explained in
\cite{MS}, the overhead $T$ introduced in the mixing time by the censoring is essentially
the square of the maximum gradient $|\nabla\xi|_\infty:=\max_i |\xi_{i+1}-\xi_i|$
that may arise in the censored dynamics. Since gradients of the
equilibrium interface are  i.i.d.\ 
random variables with exponential tail, conditioned
to their sum being equal to $h$, well known estimates 
(see e.g.\ 
Lemma \ref{lem:eq} above 
or \cite[Appendix C]{MS}) imply that
for any $i=0,\dots,L$, uniformly in $0\leq h\leq L$, and $\ell\geq (\log L)^2$:
$$
\pi\left(|\nabla\xi|_\infty\ge \ell\right)\leq Ce^{-\ell/C}.
$$
Therefore, 
for any event $A$ and starting from
the distribution $\nu:=\pi(\cdot \tc A)$, %reversibility 
the invariance of $\pi$ %immediately
implies that 
\[
\mu_t^\nu( |\nabla\xi|_\infty\ge \ell)\le
\frac{\pi\left(|\nabla\xi|_\infty\ge \ell\right)}{\pi(A)}\le C\,
%L^{3/2}
\frac{e^{-\ell/C}}{\pi(A)},
\]
for $\ell \geq (\log L)^2$.
A similar bound holds for the censored dynamics $\tilde \mu_t^\nu$. The above
observations lead to the following lemma\footnote{Actually in \cite{MS}
    the bound is given as $o(1/L^b)$ for any $b>0$ but it is easily seen to hold in the form 
    stated here.}.
\begin{lemma}[see Lemma 4.2 in \cite{MS}]
\label{Lemma1}
%For any $\gamma$ large enough the following holds. 
Let $A$ be any increasing event, and consider the censored single-site dynamics started from 
$\nu:= \pi(\cdot \tc A)$. Let $D := 1+\log(\frac 1{\pi(A)})$ and choose
$T=D^2(\log L)^9$ and $t= T L^2(\log L)^{3}$.  
Then, for some constant $c>0$ and for all $L$ large enough 
\begin{equation*}
%  \label{eq:key 1}
\|\tilde \mu^{\nu}_{t}-\pi\|\le e^{-c(\log L)^2}.     
\end{equation*}
\end{lemma}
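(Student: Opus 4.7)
The plan is to adapt the argument behind Lemma~4.2 of \cite{MS}. The proof rests on (i) an a priori control on the interface gradients under $\tilde\mu^\nu_s$, valid uniformly in $s\le t$; (ii) a per-epoch comparison of the censored single-site dynamics with one step of the parallel column chain; and (iii) Wilson's estimate on the mixing time of the column chain. I sketch the four logical steps.

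First, one exploits the fact that $\nu=\pi(\cdot\,\tc\,A)$ has density bounded by $e^{D-1}=1/\pi(A)$ with respect to $\pi$. Since $A$ is increasing, $\nu/\pi$ is an increasing function of $\xi$. The Peres--Winkler argument, see Lemma~2.2 of \cite{MS}, then guarantees that this relative density remains an increasing function throughout the censored evolution, so $\tilde\mu^\nu_s(\xi)\le e^{D-1}\pi(\xi)$ pointwise for every $s\ge 0$. Combined with the equilibrium gradient tail
\[
\pi\bigl(|\nabla\xi|_\infty\ge\ell\bigr)\le Ce^{-\ell/C},\qquad \ell\ge(\log L)^2,
\]
and a union bound over the $O(tL)$ heat-bath updates executed in $[0,t]$, one concludes that with probability at least $1-e^{-2(\log L)^2}$ the gradient remains bounded by $G:=K\,D\,(\log L)^2$ throughout, for a suitable constant $K$.

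Second, partition $[0,t]$ into $N=t/(2T)=L^2(\log L)^3/2$ epochs of length $2T$. In each epoch the censoring schedule prescribes that during the first half only even sites update and during the second half only odd sites do, so within each half-epoch the active columns evolve \emph{independently}, each performing a birth-death chain on $\bbZ\cap[-L,L+h]$ with drift determined by the frozen neighbors. On the event that all gradients stay below $G$, each single-column chain lives effectively on an interval of length $O(G)$ and its mixing time is $O(G^2\log L)$. Our choice $T=D^2(\log L)^9\gg G^2\log L$ then ensures that each column reaches its conditional equilibrium given the frozen neighbors up to total variation error $e^{-(\log L)^3}$.

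Third, each epoch of the censored dynamics couples with one step of the parallel column chain to within total variation error $L\,e^{-(\log L)^3}$. By Wilson's argument, as recalled in Section~3 of \cite{MS}, the parallel column chain has mixing time $O(L^2\log L)$, so after $N=L^2(\log L)^3/2$ column steps its total variation distance from $\pi$ is at most $e^{-c_0(\log L)^2}$ by the exponential-decay bound \eqref{eq:10}. Summing the per-epoch coupling errors with the rare-gradient-event probability yields $\|\tilde\mu^\nu_t-\pi\|\le e^{-c(\log L)^2}$, as desired.

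The hard part is Step~1: propagating the density bound $\tilde\mu^\nu_s/\pi\le e^{D-1}$ along the full evolution. Without censoring, a single-site heat-bath update can inflate the maximal ratio, but the censoring schedule is precisely designed so that every update replaces the density by a conditional expectation (under $\pi$) with respect to the $\sigma$-algebra generated by the frozen sites, which preserves boundedness and monotonicity of the ratio. This Peres--Winkler monotonicity is the crux that makes the scheme work; everything else is a careful bookkeeping of the parameters $T$, $N$, and $G$.
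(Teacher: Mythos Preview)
Your sketch is essentially correct and follows the same approach the paper outlines (and which \cite{MS} carries out in detail): bound gradients via the relative-density bound inherited from $\nu=\pi(\cdot\,|\,A)$, use the censoring schedule to compare each epoch with one step of the parallel column dynamics, and invoke Wilson's coupling estimate for the latter. The bookkeeping of $G$, $T$ and $N$ is consistent with the paper's choices.

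One point deserves correction, though it does not affect the argument. In your final paragraph you write that ``without censoring, a single-site heat-bath update can inflate the maximal ratio''. This is false: a heat-bath update at site $i$ replaces the density $f=\mu/\pi$ by the conditional expectation $E_\pi[f\mid \eta_j,\,j\ne i]$, which is an $L^\infty$ contraction, censored or not. The pointwise bound $\tilde\mu^\nu_s/\pi\le e^{D-1}$ therefore holds for the uncensored evolution as well; indeed the paper derives the gradient bound for $\mu_t^\nu$ directly from invariance of $\pi$ and only then remarks that ``a similar bound holds for the censored dynamics''. The purpose of the censoring is \emph{not} to preserve boundedness or monotonicity of the density (both are preserved anyway), but solely to make the dynamics factor into independent single-column chains within each half-epoch, so that the comparison with the column chain in your Step~2 goes through. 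So the ``hard part'' is really the per-epoch coupling with the column chain under the gradient constraint, not the propagation of the density bound.
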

\noindent
{\em Step 3.} The above lemma cannot be applied directly to our case
$A=\{\xi\ge \wedge_\kappa\}$ because at time $t=0$ the maximum
gradient is too large, of order
$\sqrt{L}\,(\log
  L)^{\kappa}$, and therefore $\pi(A)= O(e^{-c \sqrt{L}\,(\log
  L)^{\kappa}})$ is very small. This makes the corresponding overhead $T$ too
large.  The solution to this problem proposed in \cite{MS} goes as
follows (see also Lemma 4.5 and Corollary 4.6 there).
\begin{lemma}\label{lem:recur}
Fix $\kappa \geq 1$. %Then for any $\alpha$ large enough the following holds. 
Let
$(\log L)^2\le H\le \sqrt{L}(\log L)^\kappa$ and let $A_H=\{\xi_i \ge
\bar\eta_i + H,\ i=1,\dots L\}$. Let $\nu_H:=\pi(\cdot \tc A_H)$ and let $t=
L^2(\log L)^{16}$. Then,  for some $c>0$, for $L$ large enough, 
\begin{equation}\label{recur}
\mu_t^{\nu_H}(f)\le \nu_{H/2}(f) + e^{-c(\log L)^2},
\end{equation}
for any non-negative, increasing function $f$ with
$\|f\|_\infty\le 1$. 
\end{lemma}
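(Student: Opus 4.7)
The proof plan is to adapt the strategy behind Lemma 4.5 of \cite{MS}, combining the Peres--Winkler censoring inequality with Wilson's coupling argument for the even/odd column dynamics, applied this time with a moving floor. First, I introduce an auxiliary chain: the SOS dynamics that refuses every single-site move which would push $\xi_i$ strictly below $\bar\eta_i+H/2$. Let $\tilde\mu_t^{\nu_H}$ denote its law at time $t$; by construction, its unique reversible invariant measure is $\nu_{H/2}=\pi(\cdot\tc A_{H/2})$. Since $\nu_H$ is supported on $A_H\subset A_{H/2}$ and the true unconstrained dynamics is monotonically dominated by the floor dynamics (the floor only adds upward pushes), the standard monotone coupling of Section \ref{sec:monoton} yields $\mu_t^{\nu_H}(f)\le \tilde\mu_t^{\nu_H}(f)$ for every non-negative increasing $f$. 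It therefore suffices to prove $\tilde\mu_t^{\nu_H}(f)-\nu_{H/2}(f)\le e^{-c(\log L)^2}$.

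Next I apply to the floor dynamics a censoring scheme in the spirit of Lemma \ref{Lemma1}. Because $A_H$ is an increasing event, the density $d\nu_H/d\nu_{H/2}\propto \mathbf{1}_{A_H}$ is increasing, so by the Peres--Winkler censoring inequality \cite{notePeres}, freezing arbitrary Poisson clocks cannot move the law farther from $\nu_{H/2}$ on increasing events. I split $[0,t]$ into epochs of length $2T$ and within each epoch censor odd sites for the first half and even sites for the second, so that each half-epoch approximately simulates one update of the floor column dynamics. Wilson's monotone coupling argument \cite{Wilson} carries over to the floor column dynamics---the monotonicity and log-concavity of the single-column conditionals are preserved by the $\bar\eta+H/2$ constraint---and couples the extremal chains in $N=O(L^2\log L)$ column steps. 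Choosing for instance $T=(\log L)^{13}$ and $N=L^2(\log L)^2$ gives $2NT\le t$, with a per-epoch simulation error exponentially small in $(\log L)^2$ provided $|\nabla\xi|_\infty\le(\log L)^2$ on that epoch.

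Gradient control is the main technical point. At all intermediate times the censored configurations are stochastically sandwiched between $\nu_H$ (from the initial condition) and $\nu_{H/2}$ (the invariant measure), and by Lemma \ref{lem:eq} applied to these two shifted SOS equilibria, all \emph{bulk} gradients are $\le(\log L)^2$ with probability at least $1-e^{-c(\log L)^2}$. The one place where this naive bound fails is at the two boundary sites, where the initial $|\xi_1-\xi_0|$ can be as large as $H\le\sqrt L(\log L)^\kappa$ because $\xi_0=0$ while $\xi_1\ge H$ under $\nu_H$. As in \cite{MS}, I handle this with a short initial burn-in of duration $O(H^2(\log L)^c)\ll t$ during which only the two endpoint columns are active; after the burn-in the boundary gradients have thermalized to the equilibrium scale $O((\log L)^2)$ with overwhelming probability, and the main censoring argument proceeds unchanged. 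Summing the per-epoch simulation errors over the $O(L^2\log L)$ epochs and adding the equilibrium tail produced by Lemma \ref{lem:eq} then yields the announced bound.

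The main obstacle is precisely the boundary gradient issue: the bulk part is a direct rerun of Lemma \ref{Lemma1}, but the large initial boundary gradients inherited from conditioning on $A_H$ force one to decouple an explicit boundary burn-in from the even/odd column censoring, and to verify that this boundary relaxation does not eat into the $L^2(\log L)^{16}$ time budget.
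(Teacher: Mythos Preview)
Your reduction to the floor dynamics is correct: adding the floor at $\bar\eta+H/2$ dominates the true dynamics from above, and its invariant law is indeed $\nu_{H/2}$. The gap is in the mixing-time estimate for this floor chain.

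The engine behind Lemma~\ref{Lemma1} is the gradient bound $\tilde\mu_s^{\nu}(|\nabla\xi|_\infty\ge \ell)\le C e^{-\ell/C}/\pi(A)$, which is what forces the choice $T\simeq D^2$ with $D=1+\log(1/\pi(A))$. In your setting the invariant measure is $\nu_{H/2}$ and the starting law is $\nu_H=\nu_{H/2}(\cdot\,|\,A_H)$, so the relevant quantity is $\nu_{H/2}(A_H)=\pi(A_H)/\pi(A_{H/2})$. A direct computation (shift by $H$ and $H/2$ respectively) gives $\pi(A_H)\asymp e^{-2H}/\sqrt L$, hence $\nu_{H/2}(A_H)\asymp e^{-H}$ and $D\asymp H$. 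This yields $T\asymp H^2(\log L)^9$ and total time $t\asymp H^2 L^2(\log L)^{12}$, which for $H$ near $\sqrt L(\log L)^\kappa$ is $L^3$ up to logs---not $L^2(\log L)^{16}$. Your sandwiching argument does not rescue this: stochastic domination between $\nu_H$ and $\nu_{H/2}$ gives no control on gradients (which are not monotone functions), and in fact the floor \emph{forces} $\xi_1-\xi_0\ge H/2$ at all times, so the claim that the boundary gradients ``thermalize to $O((\log L)^2)$'' after a burn-in is simply false. Even focusing on the range $\xi_2-H/2$ that governs the birth--death chain at site~$1$, the burn-in (which freezes $\xi_2$) leaves $\xi_2\ge H$; bringing $\xi_2$ down requires $\xi_3$ to come down, and so on, i.e.\ it is the very bulk relaxation you are trying to prove.

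The paper's proof circumvents this by \emph{enlarging the interval} rather than raising a floor. One passes to the SOS on $[-\ell,L+\ell]$ with $\ell=H^2/(\log L)^2$ and a wall only on the extensions; the point is that in the enlarged system the interface has room to rise from the boundary height to level $H$ over $\ell$ sites, so that $\pi^{(\ell)}(A_H)\ge e^{-C(\log L)^2}$ by a local CLT estimate. Then $D=O((\log L)^2)$, Lemma~\ref{Lemma1} applies with $T=(\log L)^{13}$, and one reaches $\pi^{(\ell)}$ in time $L^2(\log L)^{16}$. A final monotonicity/tail comparison between $\pi^{(\ell)}$ and $\nu_{H/2}$ (conditioning on $\{\eta_1\le H/2,\ \eta_L\le h+H/2\}$, an event of probability $1-e^{-c(\log L)^2}$) completes the bound. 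The enlargement is exactly the missing idea that replaces your burn-in.
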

\begin{proof}
A sketch of the proof is as follows. Choose $\ell = H^2/(\log L)^2$
and consider the Glauber dynamics on the enlarged state space $\O_{\ell,L,h}$ of SOS
interfaces $\{\eta_i\}_{i=-\ell}^{L+\ell}$ such that  
\begin{equation*}
  \begin{cases}
  \eta_i\in [-L,L+h] & \text{ if   $i\in [1,L]$}\\
  \eta_i\geq \lfloor ih/(L+1)\rfloor & \text{ if  $i\in
  [-\ell,0]\cup [L+1,L+\ell]$}
  \end{cases}
     \end{equation*}
    and with boundary condition
    \[
\eta_{-(\ell
  +1)}= -\lfloor (\ell+1)h/(L+1)\rfloor,\  \eta_{L+\ell+1}=\lfloor(L+\ell+1)h/(L+1)\rfloor \,.
\]
Let $\pi^{(\ell)}$ be the corresponding SOS reversible equilibrium
measure and let $\mu_t^{\nu_{\ell,H}}$ be the distribution at time $t$
starting from $\nu_{\ell,H}:=\pi^{(\ell)}(\cdot\tc A_H)$. Clearly the marginals of
$\mu_t^{\nu_{\ell,H}}$ and of $\pi^{(\ell)}$ on $\O_{L,h}$ are
stochastically larger than $\mu_t^{\nu_H}$, and $\pi$ respectively. Moreover, from standard local limit theorem estimates (see e.g.\ \cite[Appendix C]{MS}) one has 
\begin{equation}\label{lowloc1}
\pi^{(\ell)}(A_H)\ge \frac{1}{CL^\gamma}\,e^{-CH^2/\ell}\ge e^{- C'(\log L)^2}
\end{equation}
for suitable constants $C,C',\gamma>0$. Then, we apply Lemma \ref{Lemma1} to the enlarged dynamics, with $D=O((\log L)^2)$, and get that at time $t=L^2(\log L)^{b}$, with $b=16$, 
\[
\|\mu_t^{\nu_{\ell,H}}-\pi^{(\ell)}\|\le e^{-c(\log L)^2}.
\]
Thus, for any $f$ as in the lemma, 
\[
\mu_t^{\nu_H}(f)\le \mu_t^{\nu_{\ell,H}}(f)\le \pi^{(\ell)}(f) +
e^{-c(\log L)^2}.
\]
Finally, let $E_{H}=\{\eta\in\O_{\ell,L,h}:\ \{\eta_1\le H\}\cap
\{\eta_L\le h+H\}\}$. Then, using the positivity of $f$, 
\begin{gather*}
\pi^{(\ell)}(f) \le \pi^{(\ell)}\bigl(f\tc E_{H/2}\bigr)
  +\pi^{(\ell)}\bigl(E^c_{H/2}\bigr)
\le \pi(f\tc A_{H/2}) +\pi^{(\ell)}\bigl(E^c_{H/2}\bigr). 
\end{gather*}
The Gaussian bounds of Lemma \ref{lem:eq} show that 
\[
  \pi^{(\ell)}\bigl(E^c_{H/2}\bigr)\le C  \,e^{-H^2/C\ell}\le e^{-c(\log L)^2}.
\]
In conclusion, the desired estimate \eqref{recur} holds with $t=L^2(\log L)^{16}$.
\end{proof}
Using the semigroup property, one can iterate the bound of Lemma \ref{lem:recur} to go from the initial height $H_0:=\sqrt {L}(\log L)^\kappa$ to the final height $H_n:=(\log L)^2$, with $n=O(\log L)$
steps. %Set $\hat\nu:=\nu_{(\log L)^2}$. 
%A simple $O(\log(L))$ times iteration of the above result shows that
Therefore, at time $t= c' L^2(\log L)^{17}$, one has, for all $f$ as in Lemma \ref{lem:recur}:
\begin{equation}
\mu_{2t}^{\nu_{H_0}}(f)\le \mu_t^{\nu_{H_n}}(f) + e^{-c''(\log L)^2}\,,
\label{eq:key4}
\end{equation}
for some constant $c',c''>0$.
To finish the proof  Proposition \ref{th:key}, cf. \eqref{eq:key3}, observe that 
$\nu_{H_0}=\nu_+$ and therefore, by \eqref{eq:key4}, it is sufficient to establish  
% one is left with the estimate 
\begin{equation}
\mu_t^{\nu_{H_n}}(f)\le \pi(f) +
e^{-c_1(\log L)^2}\,,
%\mu_t^{\nu}(f)\le \nu_{(\log L)^2}(f) + e^{-c\log(L)^3}\,,
\label{eq:key5}
\end{equation}
for some $c_1>0$, for all $f$ as above. 
However, from the censoring inequality we know that 
$\mu_t^{\nu_{H_n}}(f)\le \tilde \mu_t^{\nu_{H_n}}(f)$. 
Moreover, a shift of the boundary height by $O((\log L)^2)$ shows that %as in \eqref{lowloc1},
\[
\pi\big(A_{H_n}\big)\geq e^{-C(\log L)^2}.
\]
Therefore,   
\eqref{eq:key5} follows from Lemma \ref{Lemma1}, with $D=O((\log L)^2)$, for any 
$t\geq D^2L^2(\log L)^{12} = O(L^2(\log L)^{16})$.
This ends the proof of Proposition \ref{th:key} without the wall, with
the choice of the constant $\alpha_1=17$. 

When the wall is present, the symmetry between the maximal and minimal
configuration breaks down and one has the new problem of proving
convergence to equilibrium within the correct time scale starting from
the minimal configuration $\bar \eta$. This problem has been solved in
\cite{MS} when $h=0$ but the same proof applies to $h\in [0,L]$.
\qed

\section{SOS model: Proof of Theorem \ref{th:gap}}
Let $\gap(L,H)$ denote the spectral gap of the SOS model with zero boundary conditions, floor at $-H$ and ceiling at $+H$. 
Similarly, let $\gap^w(L,H)$ denote the same quantity for the system with the wall, i.e.\ 
 the spectral gap of the SOS model with zero boundary conditions, floor at $0$ and ceiling at $+H$. 
\begin{prop}\label{propgap}
There exists $c>0$ such that for all $L\in\bbN$ sufficiently large, and for all $H\in\bbN$:
\begin{equation}  \label{propgap1}
\gap(L,H)\geq c\,L^{-2}(\log L)^{-\alpha}\,,
\end{equation}
where $\alpha>0$ is the same constant appearing in Theorem \ref{th:sos}. The same bound holds for $\gap^w(L,H)$. 
\end{prop}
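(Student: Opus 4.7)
The plan is to handle two regimes of $H$ separately. When $H = O(L)$, the state space with floor $-H$ and ceiling $+H$ is stochastically contained in (or coincides up to a bounded factor with) the bounded state space $\O_{L,0}$ of Theorem \ref{th:sos}. In this regime the bound $\gap(L,H) \ge c\,L^{-2}(\log L)^{-\alpha}$ is immediate from the standard inequality $\gap \ge c/\tmix$ applied to Theorem \ref{th:sos}. The delicate case is $H \gg L$: the naive extension of Theorem \ref{th:sos} alluded to in the remark after that theorem only gives $\tmix = \tilde O(LH)$, which does \emph{not} translate to a diffusive gap, so a separate argument is required.

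For the delicate regime, I would follow the strategy suggested by the discussion after Theorem \ref{th:gap}, combined with the kinetically constrained approach of \cite{CMRT}. Introduce the typical ``window'' set
\begin{equation*}
W := \{\eta \in \O_{L,H} : |\eta_i - \bar\eta_i| \le C\sqrt L (\log L)^{\kappa} \text{ for all } i\}
\end{equation*}
for suitable constants $C,\kappa$, so that Lemma \ref{lem:eq} yields $\pi(W^c) \le e^{-c(\log L)^2}$. Consider the \emph{restricted dynamics} obtained by rejecting any update exiting $W$: its Dirichlet form $\cE^W$ is dominated by $\cE$, its equilibrium is $\pi(\cdot \tc W)$, and restricted to $W$ it is indistinguishable from an SOS evolution whose ceiling/floor is of order $\sqrt L (\log L)^\kappa$. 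For this restricted chain, Theorem \ref{th:sos} applies and yields a gap bound $\gap^W \ge c\,L^{-2}(\log L)^{-\alpha}$ through the mixing-time-to-gap inequality.

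To transfer the bound from $\gap^W$ to $\gap(L,H)$, I would decompose the variance as
\begin{equation*}
\Var(f) \le \pi[\Var(f \tc W)] + \Var(\pi[f \tc W]),
\end{equation*}
bound the first term by $(\gap^W)^{-1}\,\cE^W(f,f) \le (\gap^W)^{-1}\,\cE(f,f)$, and treat the second (``coarse'') term by canonical paths: since $\pi(W^c) \le e^{-c(\log L)^2}$ and any $\eta \in W^c$ can be brought into $W$ through a monotone sequence of single-site moves whose total Dirichlet cost is polynomially controlled, the coarse contribution is absorbed into $\cE(f,f)$ at a cost of only polylog factors. The main obstacle is precisely this last point: controlling the ``inter-region'' variance between $W$ and $W^c$ uniformly in $H$. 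The canonical-paths argument must handle arbitrarily tall excursions outside $W$, and the bound on the congestion must not grow with $H$; this is where the recursive bootstrap of \cite{CMRT} (applied height-slab by height-slab, using the exponential decay $e^{-\ell/C}$ of the marginals from Lemma \ref{lem:eq}) is essential. For the interface constrained above $\bar\eta$, the same scheme applies with $W$ replaced by a one-sided window and invoking the corresponding wall-case assertion in Theorem \ref{th:sos}.
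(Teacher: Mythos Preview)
Your treatment of the regime $H=O(L)$ is fine and coincides with the paper's base case: $\gap\ge 1/\tmix$ together with Theorem~\ref{th:sos} gives the bound when the ceiling/floor height is comparable to $L$.

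For $H\gg L$, however, your one-shot window decomposition has a real gap. Conditioning on the event $W$ versus $W^c$ produces \emph{three} pieces,
\[
\Var(f)=\pi(W)\Var(f\tc W)+\pi(W^c)\Var(f\tc W^c)+\pi(W)\pi(W^c)\bigl(\pi[f\tc W]-\pi[f\tc W^c]\bigr)^2,
\]
and only the first is controlled by $\gap^W$. The cross term is indeed harmless because $\pi(W^c)$ is tiny, but the middle term $\pi(W^c)\Var(f\tc W^c)$ is not: the set $W^c$ still contains configurations with heights up to $H$, you have no gap bound on the dynamics restricted to $W^c$, and canonical paths from such configurations down to $W$ have length of order $H$, so the congestion blows up with $H$. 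The exponential smallness of $\pi(W^c)$ does not compensate, since $\Var(f\tc W^c)$ can be as large as $\|f\|_\infty^2$ while $\cE(f,f)$ can be arbitrarily small relative to it. Your final sentence concedes this (``the bound on the congestion must not grow with $H$'') and invokes a ``recursive bootstrap \dots height-slab by height-slab'', but that is precisely what replaces your window argument rather than repairs it.

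The paper does not use a fixed window $W$ at all. It sets $\gamma(L,n)=\max\{\gap(\Lambda)^{-1}:\ell_\Lambda\le L,\ h_\Lambda\le(3/2)^n\}$ and proves the recursion $\gamma(L,n)\le(1+\delta_n)\gamma(L,n-1)$ with $\delta_n=(3/2)^{-n/6}$ summable, so that $\gamma(L,n)\le C\gamma(L,n_0)$ uniformly in $n$, and then bounds $\gamma(L,n_0)$ via Theorem~\ref{th:sos}. The recursion is obtained by writing $\Lambda=\Lambda_1\cup\Lambda_2$ with overlap of height $\gg\sqrt{\ell_\Lambda}$ and analysing a \emph{constrained block dynamics} in which $\Lambda_1$ is resampled only on the event $\{\eta\subset\Lambda_1\}$; an eigenfunction computation in the style of \cite{CMRT} shows this block chain has gap $1-O(\sqrt\delta)$, and averaging over many disjoint overlaps kills the double-counting. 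This is the concrete mechanism behind the phrase ``recursive bootstrap of \cite{CMRT}'' that your proposal leaves unimplemented.
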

The first observation is that Proposition \ref{propgap} immediately implies Theorem \ref{th:gap}. Indeed, this follows from an elementary approximation since the bound \eqref{propgap1} is uniform in $H$: 
%the generator $\cL$ given in \eqref{gener} is a bounded linear operator in $L^2(\O,\pi)$,  and for each $f\in L^2(\O,\pi)$, 
both 
$\cE(f,f)$ and $\Var(f)$ can be seen as limit as $H\to\infty$ of the corresponding quantities for the model with floor and ceiling at $\pm H$, so that for fixed $L$ one has $\gap(L)\geq \liminf_{H\to\infty}\gap(L,H)$. 

\subsection{Proof of Proposition \ref{propgap}}
To prove Proposition \ref{propgap} we use a recursive approach together with a powerful idea borrowed from the mathematical theory of kinetically constrained spin systems (see section 4 in \cite{CMRT} and the proof of Proposition \ref{recursive} below).   We give the details of the proof of the lower 
bound on $\gap^w(L,H)$, and will later illustrate the minor modifications necessary to cover the case without the wall; see Section \ref{nowall} below. 

For any rectangle $\Lambda$ in $\bbZ^2$, with base 
$\ell_\Lambda$ (horizontal side) and height $h_\Lambda$ (vertical side), we write 
$\pi_{\L}$ for the equilibrium measure of  the SOS model in the
rectangle $\L$ with zero boundary conditions at the endpoints of the
bottom horizontal side of length $\ell_\L$
(the lower horizontal side of the rectangle has vertical coordinate zero). We call $\O_\L$ the set of all allowed configurations, i.e.\ $\O_\L=\{0,\dots,h_\L\}^{\ell_\L}$. Let also 
\begin{equation}\label{gapnotat}
\gap(\L) = \inf_f \,\frac{\cE_\L(f,f)}{\Var_\L(f)},
\end{equation}
where we use the notation $\cE_\L$, $\Var_\L$ for the Dirichlet form and the variance associated to $\pi_\L$, and $f$ ranges over all functions such that $\Var_\L(f)\neq 0$. 
Note that, with this notation,  one has $\gap(\Lambda)=\gap^w(\ell_\Lambda,h_\Lambda)$.
Define
$$
\gamma(L,n) = \max_{\Lambda\in \bbF_{n,L}} \gap(\Lambda)^{-1},
$$
where $\bbF_{n,L}$ is the class of all rectangles $\Lambda$ in $\bbZ^2$ such that $\ell_\Lambda\leq L$
and $h_\Lambda\leq(3/2)^n$. The crucial recursive estimate reads as follows.

\begin{prop}
\label{recursive}
Let $n_0=n_0(L)\in\bbN$ be such that $(3/2)^{n_0}\le L<(3/2)^{n_0+1}$. Then %there exists $\alpha>0$ such that, 
for any $L$ large enough, for any $n\ge n_0$, 
\begin{equation}  \label{recursive1}
\gamma(L,n)\le (1+\delta_n)%(1+\epsilon_n)(1+\beta_n^{-1})
\,\gamma(L,n-1)
\end{equation}  
where $\delta_n=(3/2)^{-n/6}$. %$\epsilon_n= e^{-  (3/2)^{n/3}}$, $\beta_n=(3/2)^{n/5}$.
\end{prop}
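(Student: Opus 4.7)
The plan is to prove \eqref{recursive1} by a two-scale variance decomposition in the vertical direction, in the spirit of the kinetically constrained approach of \cite[Section 4]{CMRT}. Fix $\Lambda\in\bbF_{n,L}$; if $h_\Lambda\le (3/2)^{n-1}$ then $\Lambda\in\bbF_{n-1,L}$ and there is nothing to prove, so I assume $h_\Lambda\in((3/2)^{n-1},(3/2)^n]$. Since $h_\Lambda\le 2(3/2)^{n-1}$, I can pick an intermediate height $h_\ast\in\bbN$ satisfying both $h_\ast\le (3/2)^{n-1}$ and $h_\Lambda-h_\ast\le (3/2)^{n-1}$, and introduce the binary auxiliary field
\[
\sigma_i(\eta)=\mathbf{1}_{\{\eta_i>h_\ast\}},\qquad i=1,\dots,\ell_\Lambda,
\]
together with the standard orthogonal decomposition
\[
\Var_\Lambda(f)=\pi_\Lambda\bigl[\Var_\Lambda(f\tc\sigma)\bigr]+\Var_\Lambda(\bar f),\qquad \bar f(\sigma):=\pi_\Lambda[f\tc\sigma].
\]

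For the first summand, I would use the fact that, conditionally on $\sigma$, the sites of $\Lambda$ split into maximal runs on which $\sigma$ is constant; on a run with $\sigma\equiv 0$ (resp.\ $\sigma\equiv 1$) the configuration $\eta$ is an SOS interface with heights constrained to $[0,h_\ast]$ (resp.\ to $[h_\ast+1,h_\Lambda]$), with boundary conditions fixed by the two adjacent opposite-type columns (or by the boundary of $\Lambda$). Each such subproblem fits in a rectangle of base $\le L$ and vertical range $\le (3/2)^{n-1}$, so the inductive definition of $\gamma(L,n-1)$, together with the fact that single-site moves preserving $\sigma$ are a subset of the moves of the full dynamics, yields
\[
\pi_\Lambda\bigl[\Var_\Lambda(f\tc\sigma)\bigr]\le \gamma(L,n-1)\,\cE_\Lambda(f,f).
\]

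The bulk of the work lies in estimating $\Var_\Lambda(\bar f)$, since $\bar f$ is a function of the coarse-grained binary field $\sigma$ only. Here I would borrow the KCSM strategy of \cite[Section 4]{CMRT}. The decisive probabilistic input is the concentration of the marginal of $\sigma$ near $\sigma\equiv 0$: because the boundary heights are both $0$ and $h_\ast\ge(3/2)^{n_0-1}\ge cL$, Lemma \ref{lem:eq} yields a Gaussian tail bound $\pi_\Lambda(\sigma_i=1)\le C\exp(-ch_\ast^2/L)$, which is extremely small for $n\ge n_0$. A single flip $\sigma_i\to 1-\sigma_i$ can then be realised by a short sequence of single-site Glauber moves localised in a mesoscopic horizontal block of length $\ell$ around $i$, whose Dirichlet-form cost is controlled by $\gamma(\ell,n-1)\le\gamma(L,n-1)$ applied to the block. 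Optimising the block scale $\ell\sim(3/2)^{n/3}$ produces a Poincar\'e-type inequality
\[
\Var_\Lambda(\bar f)\le \delta_n\,\gamma(L,n-1)\,\cE_\Lambda(f,f)
\]
with the claimed $\delta_n=(3/2)^{-n/6}$. Summing the two bounds and maximising over $\Lambda\in\bbF_{n,L}$ gives \eqref{recursive1}.

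The main obstacle will be precisely this last step: converting the pointwise Gaussian smallness of the events $\{\sigma_i=1\}$ into a genuine Poincar\'e inequality for $\bar f$ with the sharp prefactor $\delta_n$. Following \cite{CMRT}, this requires constructing explicit ``unblocking paths'' that realise $\sigma$-flips via a controlled number of single-site moves, choosing the mesoscopic block scale $\ell$ sharply, and tightly accounting for how the equilibrium weights of the intermediate configurations compensate the path-length penalty; the exponent $1/6$ in $\delta_n$ is the outcome of this geometric optimisation.
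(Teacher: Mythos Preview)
Both summands in your decomposition have genuine gaps. For the conditional term: the law of $\eta$ given $\sigma$ does factorise over runs (the absolute-value potential makes the cross-run interaction $|\eta_{i+1}-\eta_i|=\eta_{i+1}-\eta_i$ split multiplicatively), but the effective boundary height on a $\sigma\equiv 0$ run is $0$ where the run touches $\partial\Lambda$ and $h_\ast$ where it touches a $\sigma\equiv 1$ run. Hence whenever $\sigma\not\equiv 0$ the outermost $\sigma\equiv 0$ runs are SOS models with mixed boundary $(0,h_\ast)$ on an interval whose length may be as small as $1$. For a run of length $1$ this conditional law is \emph{uniform} on $\{0,\dots,h_\ast\}$ and the $\sigma$-preserving dynamics is simple random walk with inverse gap of order $h_\ast^2\sim(3/2)^{2n}$. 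Since $\gamma(L,n-1)$ is defined only for zero boundary and is expected (indeed, is being proved) to stay of order $L^2$ uniformly in $n$, the inequality $\Var_\Lambda(f\tc\sigma)\le\gamma(L,n-1)\,\cE^\sigma_\Lambda(f,f)$ fails badly for such $\sigma$, and after averaging you get $\max_\sigma\gamma^\sigma$ in front of $\cE_\Lambda(f,f)$, which blows up with $n$. For the fluctuation term, the sketch is too vague to be a proof: there is no natural $\sigma$-dynamics, a canonical-paths comparison of $\sigma$-flips to $\eta$-moves produces path-length and congestion factors with no evident relation to $\delta_n\gamma(L,n-1)$, and the scale $\ell\sim(3/2)^{n/3}$ and the exponent $1/6$ are asserted with no supporting computation. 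The KCSM input the paper takes from \cite{CMRT} is \emph{not} a path argument on a binary coarse-graining but a direct eigenvalue analysis of a two-block constrained dynamics.

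The paper's route sidesteps both issues. One writes $\Lambda=\Lambda_1\cup\Lambda_2$ with $\Lambda_i\in\bbF_{n-1,L}$ sharing a vertical overlap of height $\ge(3/2)^{3n/4}$, and studies the block dynamics that resamples in $\Lambda_2$ always and in $\Lambda_1$ only on the event $\{\eta\subset\Lambda_1\}$. Because the $\Lambda_2$-move conditions on the \emph{actual} heights below the overlap (not on a binary field), it factorises into SOS models with boundary at their floor, i.e.\ genuine zero-boundary rectangles in $\bbF_{n-1,L}$, so $\gamma(L,n-1)$ applies directly; likewise the $\Lambda_1$-move, when allowed, is exactly $\pi_{\Lambda_1}$. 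A short eigenvalue computation (Lemma~\ref{gap-block}) shows this block dynamics has gap $\ge 1-e^{-(3/2)^{n/4}}$, yielding $\Var_\Lambda(f)\le(1+o(1))\gamma(L,n-1)\bigl(\cE_\Lambda(f,f)+\cE^I_\Lambda(f,f)\bigr)$ with $\cE^I_\Lambda$ the Dirichlet form restricted to heights in the overlap. The factor $(1+\delta_n)$ then arises not from any path argument but from averaging over $s_n=(3/2)^{n/5}$ such decompositions with pairwise \emph{disjoint} overlap strips, which absorbs the extra $\cE^I_\Lambda$ term at cost $1+s_n^{-1}$.
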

Let us finish the proof of Proposition \ref{propgap}, assuming the validity of Proposition \ref{recursive}. Iterating the bound \eqref{recursive1} one has
\begin{equation}  \label{propgap2}
\gamma(L,n)\leq C\,\gamma(L,n_0)\,,
\end{equation}
and 
$$
C=
\prod_{n=n_0}^\infty (1+\delta_n)%(1+\epsilon_n)(1+\beta_n^{-1})
<\infty,
$$ 
uniformly in $L$.
Proposition \ref{propgap} now follows from \eqref{propgap2} and the following lemma.
%It remains to bound from above $\gamma^w_{n_0}$. 
\begin{lemma}
There exists $C<\infty$ such that for any $L$ large enough
\[
\gamma(L,n_0)\le CL^2(\log L)^\alpha\,,
\] 
with $n_0=n_0(L)$ as in Proposition \ref{recursive}.
\end{lemma}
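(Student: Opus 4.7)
The plan is straightforward: apply Theorem \ref{th:sos} (the version with the wall) to each rectangle in $\bbF_{n_0,L}$ and then convert the resulting mixing time bound into a spectral gap bound via the standard reversible Markov chain comparison.

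Given $\Lambda\in\bbF_{n_0,L}$, one has $\ell_\Lambda\le L$ and $h_\Lambda\le (3/2)^{n_0}\le L$. The measure $\pi_\Lambda$ and its associated heat bath dynamics fit into the framework of Section \ref{sec:SOS}: horizontal size $\ell_\Lambda$, zero boundary heights at both endpoints (so $h=0$ and $\bar\eta\equiv 0$), floor $\xi^1\equiv 0$ and ceiling $\xi^2\equiv h_\Lambda$. Since trivially $\xi^1\le\bar\eta\le \xi^2$, the extension of Theorem \ref{th:sos} stated in point 2 of the remark following that theorem applies and yields
\[
\tmix(\Lambda)\le \ell_\Lambda^2(\log\ell_\Lambda)^\alpha\le L^2(\log L)^\alpha
\]
for all $L$ large enough, uniformly in $\Lambda\in\bbF_{n_0,L}$.

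Next I would invoke the standard comparison between mixing time and spectral gap for reversible continuous-time Markov chains, which gives $\gap(\Lambda)^{-1}\le 1+\tmix(\Lambda)$. With the normalization $\epsilon=(2e)^{-1}$ used in \eqref{eq:3} one has $\log(1/(2\epsilon))=1$, so this inequality follows directly from the spectral decomposition of $\mu_t^\xi-\pi$ on the eigenbasis of $-\cL$ together with \eqref{eq:10} (see e.g.\ Levin--Peres--Wilmer). Combining the two bounds gives $\gap(\Lambda)^{-1}\le 2L^2(\log L)^\alpha$ for $L$ large, hence
\[
\gamma(L,n_0)=\max_{\Lambda\in\bbF_{n_0,L}}\gap(\Lambda)^{-1}\le C L^2(\log L)^\alpha,
\]
as claimed.

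The only point that is not completely automatic is verifying that Theorem \ref{th:sos} really does apply to the specific boundary data of $\pi_\Lambda$ (zero boundary heights together with a hard floor at $0$ and a hard ceiling at $h_\Lambda$, with $h_\Lambda$ possibly comparable to $L$). This is precisely what the remark after Theorem \ref{th:sos} is designed to cover, so no genuinely new mixing-time estimate has to be proved here; the entire lemma is a packaging of the mixing-time result together with the textbook spectral-gap/mixing-time inequality.
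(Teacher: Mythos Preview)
Your argument has a genuine gap in the case of tall, thin rectangles. A rectangle $\Lambda\in\bbF_{n_0,L}$ has base $\ell_\Lambda\le L$ and height $h_\Lambda\le(3/2)^{n_0}\le L$, but nothing prevents $\ell_\Lambda$ from being much smaller than $h_\Lambda$ (even $\ell_\Lambda=1$ with $h_\Lambda\approx L$ is allowed). Point 2 of the remark after Theorem \ref{th:sos} requires $\xi^1,\xi^2\in\Omega_{\ell_\Lambda,0}$, i.e.\ the ceiling must satisfy $\xi^2\le \ell_\Lambda$; this fails when $h_\Lambda>\ell_\Lambda$. Consequently the bound $\tmix(\Lambda)\le \ell_\Lambda^2(\log\ell_\Lambda)^\alpha$ is not justified (and is in fact false: for $\ell_\Lambda=1$ the single column needs time of order $h_\Lambda$ just to descend from the ceiling). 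The paper explicitly flags this issue, noting that Theorem \ref{th:sos} only applies when $h_\Lambda\le C\ell_\Lambda$.

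The paper's proof circumvents this by a bootstrap. It applies Theorem \ref{th:sos} only to rectangles with $\ell_\Lambda>L/8$, where the aspect ratio is bounded and the theorem is legitimate, obtaining $\gap(\Lambda)^{-1}=O(L^2(\log L)^\alpha)$ there. For $\ell_\Lambda\le L/8$ it observes that such rectangles lie in $\bbF_{n_0(L),L/8}$ up to a bounded shift in the height index, so Proposition \ref{recursive} (iterated a few times) bounds their inverse gap by a constant times $f(L/8)$, where $f(L):=\gamma(L,n_0(L))$. This yields the recursion $f(L)\le 2f(L/8)+CL^2(\log L)^\alpha$, which solves to $f(L)\le C'L^2(\log L)^\alpha$. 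Your direct approach could be rescued if you invoke point 3 of the remark (the $\tilde O(\ell_\Lambda\max(\ell_\Lambda,M))$ bound with $M=h_\Lambda$), but that remark is not stated for the wall case, and in any event you would need to replace the incorrect $\ell_\Lambda^2$ bound with $\ell_\Lambda h_\Lambda\le L^2$.
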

\begin{proof}[Proof]
%For any rectangle $\L$ let $\ell_\L$ be the length of its base and write 
Note that,  thanks to the straightforward bound $\gap^{-1} \leq \tmix$
(which holds for any reversible Markov chain \cite{Peres}), if $h_\L\leq (3/2)^{n_0}\leq C\ell_\L\leq CL$ for some $C>0$,  
from Theorem \ref{th:sos} one has 
$$\gap(\L)^{-1} = O(L^2(\log L)^\alpha).$$
(Strictly speaking Theorem \ref{th:sos} is stated only for $h_\L\leq \ell_\L$, but the proof works as it is as soon as $h_\L\leq C\ell_\L$ for some $C$). 
Therefore, the same bound applies to every $\Lambda\in  \bbF_{n_0,L}$ such that 
$\ell_\Lambda > L/8$. Thus, one has to estimate only
$
\max\{
\gap(\Lambda)^{-1}\,,\;\Lambda\in \bbF_{n_0,L}:\;\ell_\Lambda \le L/8\}.
%\max_{\stackrel{\Lambda\in \bbF_{n_0,L}:}{\ell_\Lambda > L/8}}\gap(\Lambda)^{-1}    \big)
$
In order to deal with this term, define $f(L)=\gamma(L,n_0)$, for all $L$ and $n_0=n_0(L)$ as above.
%such that $(3/2)^{n_0}\leq L \leq (3/2)^{n_0+1}$.
%\[
%f(L):= \max\{\gap(\Lambda)^{-1}\,,\; \Lambda\in \bbF_{n_0,L}\}
%\]
From Proposition \ref{recursive}, one has
\[\max\{
\gap(\Lambda)^{-1}\,,\;\Lambda\in \bbF_{n_0,L}:\;\ell_\Lambda \le L/8\}
\leq C_1\, f(L/8)\,,
\]
 where the constant $C_1=\prod_{n=n_0-3}^{n_0} (1+\delta_n)
 %(1+\epsilon_n)(1+\beta_n^{-1})
 \leq 2$ for $n_0$ large enough. 
In conclusion,
\[
f(L) \le \max\Big\{2 f(L/8), CL^2(\log L)^{\alpha}\Big\}\le 2 f(L/8) + CL^2(\log L)^{\alpha}.
\]
A simple induction now proves that $f(L)\le C'  L^2(\log L)^{\alpha}$ for some new constant $C'$.
\end{proof}

\subsection{Proof of Proposition \ref{recursive}}\label{sec:rec}
Take $n\in\bbN$ and $\Lambda\in\bbF_{n,L}$. Without loss of generality we write
$\Lambda=[1,\ell_\Lambda]\times [0,h_\Lambda]$. We need to show that
$$
\gap(\Lambda)^{-1}\le (1+\delta_n)
%(1+\epsilon_n)(1+\beta_n^{-1})
\gamma(L,n-1).
$$
We use a geometric construction analogous to that of \cite[Proposition 3.2]{BCC}. 
 If $h_\Lambda\leq (3/2)^{n-1}$ then actually
$\Lambda\in\bbF_{n-1,L}$ and therefore $\gap(\Lambda)^{-1}\leq \gamma(L,n-1)$. If instead 
$h_\Lambda > (3/2)^{n-1}$, then it is easily seen that we can write $\Lambda=\Lambda_1\cup\Lambda_2$
where $\Lambda_1,\Lambda_2$ are such that:
\begin{enumerate}[(a)]
\item $\Lambda_i\in \bbF_{n-1,L},\ i=1,2$;
\item $\Lambda_i$, $i=1,2$,  has the same base $[1,\ell_\Lambda]$ of $\Lambda$; 
\item the overlap rectangle $I=\Lambda_1\cap 
\Lambda_2\neq
\emptyset$ has base length $\ell_\Lambda$ and height at least
 $\Delta_n:=(3/2)^{3n/4}$.    
\end{enumerate}
Moreover, there are at least $ s_n:=(3/2)^{n/5}$ such decompositions $\bigl\{(\Lambda_1^{(i)},\Lambda_2^{(i)})\bigr\}_{i=1}^{s_n}$ with the property
that the overlap rectangles $I^{(i)} = \Lambda^{(i)}_1\cap\Lambda^{(i)}_2$ are {\em disjoint}, i.e.\
$I^{(i)}\cap I^{(j)} = \emptyset$, for all $i\neq j$.
% corresponding to two distinct pairs
%$(\Lambda^{(i)}_1,\Lambda^{(i)}_2)$, and $(\Lambda^{(j)}_1,\Lambda^{(j)}_2)$, $i\neq j$, are {\em disjoint}.

Next, fix one of the $s_n$ decompositions mentioned above, say
$\Lambda_1=[1,\ell_\Lambda]\times[0,h_{\Lambda_1}]$ and
$\Lambda_2=[1,\ell_\Lambda]\times[h_{\Lambda_1}-|I|,h_\Lambda]$, where
$ h_\Lambda = h_{\Lambda_1} +h_{\Lambda_2} -h_I$, with  $h_I\ge
\Delta_n$
denoting the height of the overlap rectangle $I$.
Consider the distribution $\pi_{\Lambda_2}^\eta$ corresponding to the equilibrium measure in the region $\Lambda_2$ conditioned to the value of the configuration $\eta$ in the region $\Lambda\setminus\Lambda_2 = [1,\ell_\Lambda]\times[0,h_{\Lambda_1}-h_I]$. Let $$\cC(\eta):=\{i\in [1,\ell_\Lambda]:\ \eta_i > h_{\Lambda_1}-h_I\}.$$ 
Note that, if $\cC(\eta)\neq\emptyset$, then  $\cC(\eta)$ is the disjoint union of intervals $B_1,B_2,\dots , B_k$, and one has that $\pi_{\Lambda_2}^{\eta}$ is a product measure  
$\otimes_{i=1}^k\pi_{R_i}$ on the rectangles $R_i= B_i\times [h_{\Lambda_1}-h_I+1,h_\Lambda]$, such that each $\pi_{R_i}$ is the SOS equilibrium measure with floor at height $h_{\Lambda_1}-h_I+1$ and ceiling at height $h_{\Lambda}$. If, on the other hand, $\cC(\eta)=\emptyset$, then $\eta$ is entirely contained inside $\L\setminus\Lambda_2$, %(for shortness $\eta\subset \Lambda_1$), 
and $\pi_{\Lambda_2}^{\eta}$  is trivial, i.e.\ it gives full mass to the empty configuration in the region $\Lambda_2$.

Define a \emph{constrained block  dynamics} as follows. With rate one the current configuration $\eta$ is re-sampled inside $\Lambda_2$ from the distribution $\pi_{\Lambda_2}^\eta$ described above. 
Moreover, if $\eta\subset \L_1$, that is $\eta_i\leq h_{\L_1}$ for all $i=1,\dots,\ell_\L$, then with rate one, the current configuration $\eta$ is re-sampled inside $\Lambda_1$ from the distribution $\pi_{\Lambda_1}$ (defined just before \eqref{gapnotat}).
This process is {\em kinetically constrained} in that the region $\Lambda_1$ is updated only if $\eta\subset\Lambda_1$. 

It is not hard to check that the above dynamics is reversible w.r.t.\ 
the SOS equilibrium measure %in $\Lambda$, 
$\pi_\Lambda$, and that its Dirichlet form is equal to:
\[
\cE_{\rm block}(f,f) = \pi_\Lambda\left(1_{\{\eta\subset \Lambda_1\}}
\Var_{\Lambda_1}(f) + \Var^{\eta}_{\Lambda_2}(f)\right),
\]
where $\Var_{\Lambda_1}$ stands for the variance w.r.t.\ $\pi_{\Lambda_1}$, while $\Var^{\eta}_{\Lambda_2}$ stands for the variance w.r.t.\ $\pi_{\Lambda_2}^\eta$.
Notice that, even starting from the maximal configuration $\wedge$ in $\Lambda$, if we first update $\Lambda_2$ and then $\Lambda_1$, with very high probability we reach the distribution $\pi_{\Lambda_1}$. Indeed, thanks to the fact that $h_I\ge \Delta_n \gg \sqrt{\ell_\Lambda}$,  the first update in $\Lambda_2$ 
will produce with very high probability a new configuration entirely contained in $\Lambda_1$. 
In turn, since $\|\pi_{\Lambda_1}-\pi_\Lambda\|$ is very small (because $h_{\Lambda_1}\gg\sqrt{\ell_{\Lambda}}$), the above example suggests that the spectral gap of the constrained block dynamics should be very close to one. This is quantified in the next lemma.
\begin{lemma}
\label{gap-block}
For all $L$ large enough, and $n\geq n_0(L)$, for all functions $f$:
\[
\Var_\Lambda(f)\le \big(1+ e^{-(3/2)^{n/4}}\big)\,\cE_{\rm block}(f,f). %, \qquad \forall f\in L^2(\pi_\Lambda).  
\]
\end{lemma}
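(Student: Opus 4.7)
The plan is to prove the bound via a variance decomposition along the ``upper'' block $\Lambda_2$, combined with a Gaussian-type concentration estimate for the probability of the kinetic constraint being violated. Let $g(\eta):=\pi^\eta_{\Lambda_2}(f)$ be the conditional expectation of $f$ given the restriction of $\eta$ to $U:=\Lambda\setminus\Lambda_2$. By the law of total variance,
\[
\Var_\Lambda(f)=\pi_\Lambda\big[\Var^\eta_{\Lambda_2}(f)\big]+\Var_\Lambda(g).
\]
Since the first term on the right is exactly the $\Lambda_2$-contribution to $\cE_{\rm block}(f,f)$, the task reduces to showing that $\Var_\Lambda(g)$ is essentially bounded by the $\Lambda_1$-contribution $\pi_\Lambda[1_A\Var_{\pi_{\Lambda_1}}(f)]=p\Var_{\pi_{\Lambda_1}}(f)$, where $A=\{\eta\subset\Lambda_1\}$ and $p=\pi_\Lambda(A)$. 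A crucial trivial identity used throughout is $\pi_\Lambda(\cdot\tc A)=\pi_{\Lambda_1}$, immediate from the common Gibbs form of the two measures.

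Next comes the equilibrium concentration estimate. Because $h_{\Lambda_1}\ge h_I\ge \Delta_n=(3/2)^{3n/4}$ and $\ell_\Lambda\le L<(3/2)^{n+1}$, applying Lemma \ref{lem:eq} to $\pi_\Lambda$ yields
\[
1-p=\pi_\Lambda(A^c)\le C\exp\big(-h_{\Lambda_1}^2/(C\ell_\Lambda)\big)\le \exp\big(-c\,(3/2)^{n/2}\big),
\]
which is much smaller than the target $e^{-(3/2)^{n/4}}$. Applied to the conditional measure $\pi^\eta_{\Lambda_2}$ instead, the same lemma gives the analogous bound $1-q(\eta)\le \exp(-c(3/2)^{n/2})$, where $q(\eta):=\pi^\eta_{\Lambda_2}(\eta\subset\Lambda_1)$, uniformly in $\eta$ whose restriction to $U$ lies safely below the top of $U$---a set of full $\pi_\Lambda$-measure modulo exponentially small corrections.

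For the final variance comparison, introduce the auxiliary $\tilde g(\eta):=\pi_{\Lambda_1}^{\eta_U}(f)$. The key identity is
\[
\tilde g(\eta)=\pi^\eta_{\Lambda_2}(f\tc \eta\subset\Lambda_1),
\]
which holds because conditioning $\pi^\eta_{\Lambda_2}$ on $\{\eta\subset\Lambda_1\}$ produces exactly the $\Lambda_1$-conditional SOS measure given $\eta_U$. Combined with the law of total variance under $\pi^\eta_{\Lambda_2}$, this identity yields the pointwise bound
\[
(g(\eta)-\tilde g(\eta))^2=(1-q(\eta))^2(\bar f(\eta)-\tilde g(\eta))^2\le \frac{1-q(\eta)}{q(\eta)}\Var^\eta_{\Lambda_2}(f),
\]
with $\bar f(\eta):=\pi^\eta_{\Lambda_2}(f\tc \eta\not\subset\Lambda_1)$. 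On the set $\{q(\eta)\ge 1/2\}$ this integrates to an $L^2$-error of order $e^{-c(3/2)^{n/2}}\pi_\Lambda[\Var^\eta_{\Lambda_2}(f)]$, which can be absorbed into the Dirichlet form; on the complementary ``bad'' set one uses $\pi_\Lambda(\mathrm{bad})\le e^{-c(3/2)^{n/2}}$ together with the Jensen bounds $g^2\le \pi^\eta_{\Lambda_2}(f^2)$ and $\tilde g^2\le \pi_{\Lambda_1}^{\eta_U}(f^2)$ to show the contribution is negligible. Combined with Jensen's inequality $\Var_{\pi_{\Lambda_1}}(\tilde g)\le \Var_{\pi_{\Lambda_1}}(f)$, the conditional decomposition $\Var_\Lambda(g)=p\Var_{\pi_{\Lambda_1}}(g)+(1-p)[\Var_\nu(g)+p(\pi_{\Lambda_1}(g)-\nu(g))^2]$ (with $\nu:=\pi_\Lambda(\cdot\tc A^c)$), and a standard $(a+b)^2\le (1+\varepsilon)a^2+(1+\varepsilon^{-1})b^2$ splitting with $\varepsilon=e^{-(3/2)^{n/4}}$, this delivers the claim. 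The main obstacle is precisely the $L^2$ comparison $g\approx \tilde g$: a naive pointwise bound would introduce an uncontrollable $\|f\|_\infty^2$; the use of the identity $\tilde g=\pi^\eta_{\Lambda_2}(f\tc\cdot)$, which bounds the error by $\Var^\eta_{\Lambda_2}(f)$ rather than by $\|f\|_\infty^2$, is the key device that ties the error to the Dirichlet form and makes the argument go through.
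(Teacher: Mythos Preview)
Your approach is correct and takes a genuinely different route from the paper's. The paper argues spectrally: it writes down the eigenvalue equation $\cL_{\rm block}f=-\lambda f$ for the gap eigenfunction, averages it against $\pi_{\Lambda_1}$ and against $\pi^\eta_{\Lambda_2}$, and from the resulting self-consistency relation on $\|\pi^\cdot_{\Lambda_2}(f)\|_\infty$ extracts $1-\lambda=O(\sqrt\delta)$. Your route is a direct variance decomposition, with the identity $\tilde g=\pi^\eta_{\Lambda_2}(f\tc A)$ as the key device that ties the error $(g-\tilde g)^2$ to $\Var^\eta_{\Lambda_2}(f)$ rather than to $\|f\|_\infty^2$, and the contraction $\Var_{\pi_{\Lambda_1}}(\tilde g)\le\Var_{\pi_{\Lambda_1}}(f)$ to close the estimate. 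Your method is arguably more robust (no eigenfunction is needed, so it works verbatim when the state space is infinite), while the paper's spectral argument is shorter and sidesteps the bookkeeping of the cross terms $(1-p)\Var_\nu(g)$ and $p(1-p)(\pi_{\Lambda_1}(g)-\nu(g))^2$ that you leave somewhat implicit.

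One correction: you restrict the bound $1-q(\eta)\le e^{-c(3/2)^{n/2}}$ to $\eta$ with $\eta_U$ ``safely below the top of $U$'' and then treat a complementary bad set via $g^2\le\pi^\eta_{\Lambda_2}(f^2)$. That bad-set argument would land you with a term like $\pi_\Lambda(f^2\,1_{\rm bad})$ which does not feed back into the Dirichlet form. Fortunately no bad set is needed: by monotonicity the minimum of $q(\eta)$ over \emph{all} $\eta$ is attained when $\mathcal C(\eta)=[1,\ell_\Lambda]$, in which case $\pi^\eta_{\Lambda_2}$ is (up to a vertical shift) an SOS measure on a rectangle of base $\ell_\Lambda$ and height $h_{\Lambda_2}$ with zero boundary, and Lemma~\ref{lem:eq} gives $1-q\le C\exp(-C^{-1}(3/2)^{n/2})$ uniformly. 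With this in hand the bad set is empty; to finish cleanly you should also make explicit the change of measure $\pi_\Lambda[h]\le \tfrac{p}{1-\delta}\pi_{\Lambda_1}[h]$ for $\eta_U$-measurable $h\ge 0$ (Radon--Nikodym density $q/p$) and the Cauchy--Schwarz bound $\pi_{\Lambda_1}(f)^2\le \tfrac{1-p}{p^2}\Var_\Lambda(f)$, both needed to absorb the centering mismatch between $\pi_\Lambda$ and $\pi_{\Lambda_1}$ in your final paragraph.
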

Assuming the validity of Lemma \ref{gap-block}, the proof of Proposition \ref{recursive} can be completed as follows. From \eqref{dirich} and \eqref{gapnotat}, for all $f:\O_\L\mapsto\bbR$:
\begin{align}
  \label{bibli1}
&\pi_\Lambda\left(1_{\{\eta\subset \Lambda_1\}}\Var_{\L_1}(f)\right)
\nonumber \\
&\qquad \leq\gamma(L,n-1) \frac12\sum_{i=1}^{\ell_\L}\pi_{\L}
\big[p_{i,+} (\nabla_{i,+}f)^21_{\{\eta_i\leq h_{\L_1}\}} +  p_{i,-} (\nabla_{i,-}f)^21_{\{\eta_i\leq h_{\L_1}\}}
\big]
\,.
\end{align}
Similarly, %for any $\eta\in\O_\L$:
\begin{align}
  \label{bibli2}
&\pi_\Lambda\big(\Var_{\L_2}^\eta(f)\big)
\nonumber \\
&\qquad \leq \gamma(L,n-1) \frac12\sum_{i=1}^{\ell_\L}\pi_{\L}
\big[p_{i,+} (\nabla_{i,+}f)^21_{\{\eta_i\geq h_{\L_1} - h_I\}} +  p_{i,-} (\nabla_{i,-}f)^21_{\{\eta_i\geq h_{\L_1} - h_I\}}
\big]
\,.
\end{align}
From Lemma \ref{gap-block}, \eqref{bibli1} and \eqref{bibli2} we have
\begin{equation}\label{bibli3}
\Var_\Lambda(f)\le \big(1+ e^{-(3/2)^{n/4}}\big)\,\gamma(L,n-1)\big(\cE_\L(f,f) + \cE^{I}_\L(f,f)\big)
\end{equation}
where $\cE_\L(f,f)$ is the usual Dirichlet form in $\L$, while 
$$
\cE^{I}_\L(f,f):=\frac12\sum_{i=1}^{\ell_\L}\pi_{\L}
\big[p_{i,+} (\nabla_{i,+}f)^21_{\{ h_{\L_1} - h_I\leq \eta_i\leq h_{\L_1}\}} +  p_{i,-} (\nabla_{i,-}f)^21_{\{h_{\L_1} - h_I\leq \eta_i\leq h_{\L_1}\}}
\big]
\,.
$$
Since \eqref{bibli3} is valid for every one of the $s_n$ choices of the rectangles $\big(\L_1^{(j)},\L_2^{(j)}\big)_{j=1}^{s_n}$, one can average this estimate to obtain, with $I^{(j)}=\L_1^{(j)}\cap\L_2^{(j)}$:
$$
\Var_\Lambda(f)\le \big(1+ e^{-(3/2)^{n/4}}\big)\,
\gamma(L,n-1)
\big(\cE_\L(f,f) + s_n^{-1}\sum_{j=1}^{s_n}\cE^{I^{(j)}}_\L(f,f)\big).
$$
Since the overlaps $I^{(j)}$ are disjoint, one has the obvious bound 
$\sum_{j=1}^{s_n}\cE^{I^{(j)}}_\L(f,f)\leq\cE_\L(f,f)$. It follows that
$$
\gamma(L,n)\le \big(1+ e^{-(3/2)^{n/4}}\big)\big(1+ s_n^{-1}\big)\gamma(L,n-1).
$$
By construction, $s_n^{-1}=O((3/2)^{-n/5})$, so that $(1+ e^{-(3/2)^{n/4}})(1+ s_n^{-1}) \leq  1+\d_n$
for all $n$ large enough, with $\d_n=(3/2)^{-n/6}$. This concludes the proof of Proposition \ref{recursive}.

\smallskip

\begin{proof}[Proof of Lemma \ref{gap-block}]
The proof is similar to that of \cite[Proposition 4.4]{CMRT}. 
Let $G=\{\eta\in\O_\L:\;\eta\subset \L_1\}$. 
Note that the event $G$ is very likely to occur under the equilibrium
$\pi_\L$. In particular, from \eqref{eq:eq}, one has 
\begin{equation}
  \label{eq:blo01}
\epsilon:=\pi_\L(G^c)\leq C\,\exp{(-C^{-1} (3/2)^{n})}.
\end{equation}
Moreover, $G$ is very likely to occur even under the equilibrium $\pi_{\L_2}^\eta$, 
{\em uniformly} in $\eta\in\O_\L$. Indeed, by monotonicity, the smallest value of $\pi_{\L_2}^\eta(G)$
is achieved at configurations $\eta\in\O_\L$ such that $\eta_i> h_{\L_1}-h_I$ for all $i=1,\dots,\ell_\L$ and in that case $\pi_{\L_2}^\eta(G)$ is the  $\pi_{\L_2'}$-probability  in the rectangle $\L_2':=[1,\ell_\L]\times [0,h_{\L_2}]$ %with zero boundary conditions 
that the height does not exceed $h_I$ at any point. Therefore, using  
$h_I^2/\ell_\L\geq \Delta_n^2(3/2)^{-n} = (3/2)^{n/2}$ and the bounds \eqref{eq:eq} one has 
\begin{equation}
  \label{eq:blo1}
\d:=\max_\eta\pi_{\L_2}^{\eta}(G^c)\leq C\,\exp{(-C^{-1} (3/2)^{n/2})}.
\end{equation}
On the other hand, reasoning as in \eqref{lowloc1} (and using $h_{\L_2}\gg\sqrt{\ell_\L}$), 
$\d$ also satisfies \[\d\geq  C^{-1}\,e^{-C (3/2)^{n/2}},\] and therefore in particular 
$\d\geq \sqrt {\epsilon/(1-\epsilon)}$ for all
$n$ large enough. 

The infinitesimal generator of the block dynamics acts on functions $f:\O_\L\mapsto\bbR$ as
$$
\cL_{\rm block}f(\eta) =  1_G(\eta)\,
(\pi_{\L_1}(f) - f(\eta))+ \pi^{\eta}_{\L_2}(f)-f(\eta)\,.
$$
Let $\l$ be the spectral gap of the block dynamics and let $f$ be an
eigenfunction of $\cL_{\rm block}$ with eigenvalue
$-\l$. 
%Clearly, $\pi_\L(f)=0$. Moreover, $\pi_{\L_2}^{\eta}(f)=f(\eta)$
% if $\eta\in G$. %$\cC(\eta)=\emptyset$. 
Thus,  $f$ must satisfy, for all $\eta\in\O_\L$: 
\begin{equation}
  \label{eq:gap1}
(1 -\l) f(\eta) = 1_{G}(\eta)\left(\pi_{\L_1}(f)-f(\eta)\right) + \pi_{\L_2}^{\eta}(f).
\end{equation}
If we average both sides w.r.t.\ $\pi_{\L_1} = \pi_\L(\cdot\tc G)$ 
%and use $\pi_\L^w(f)=0$ 
we get
\begin{gather}
(1-\l)\pi_{\L_1}(f)=
%\frac{1}{(1-\l)}
\pi_{\L_1}\big(\pi_{\L_2}^{\eta}(f)\big)
%\nonumber\\
=%\frac{1}{(1-\l)}
\pi_{\L}\big(\pi_{\L_2}^{\eta}(f)\tc G\big)=
%\frac{1}{(1-\l)}
%\frac{1}{\pi_\L(G)}
(1-\epsilon)^{-1}{\rm Cov}_{\L}\big(1_G\,,\,\pi_{\L_2}^{\eta}(f)\big)
\label{eq:gap2}
\end{gather}
where ${\rm Cov}_{\L}(\cdot\,,\,\cdot)$ denotes the covariance
w.r.t.\ $\pi_\L$, and we have used $\pi_\L(\pi_{\L_2}^{\eta}(f)) = \pi_\L(f)=0$.

Assume now  $1-\l>0$, otherwise there is
nothing to be proved. Schwarz' inequality gives 
${\rm Cov}_{\L}\big(1_G\,,\,\pi_{\L_2}^{\eta}(f)\big)\leq \Var_\L(1_G)^{1/2}\|\pi_{\L_2}^\cdot(f)\|_\infty$. Therefore, from 
\eqref{eq:gap2}, one has 
\begin{equation}\label{blo2}
|\pi_{\L_1}(f)| \le \frac{1}{(1-\l)}\sqrt{\frac{\epsilon}{1-\epsilon}}\,
 \|\pi_{\L_2}^\cdot(f)\|_\infty\le
\frac{\d}{(1-\l)}\,\|\pi_{\L_2}^{\cdot}(f)\|_\infty,
\end{equation}
where we use $\d\geq \sqrt {\epsilon/(1-\epsilon)}$, for $n$ large
enough
and $\|\pi_{\L_2}^{\cdot}(f)\|_\infty=\sup_\eta|\pi_{\L_2}^{\eta}(f)|$.
On the other hand we can rewrite  \eqref{eq:gap1} as
\begin{equation*}
  f(\eta)= \frac{1_G(\eta)\ \pi_{\L_1}(f)}{1-\l +1_G(\eta)} + \frac{\pi_{\L_2}^{\eta}(f)}{1-\l+1_G(\eta)}.
\end{equation*}
Therefore, applying $\pi_{\L_2}^{\eta}$ to both sides, we get
\begin{gather*}
 \pi_{\L_2}^{\eta}(f) = \frac{\pi_{\L_2}^{\eta}(G) \pi_{\L_1}(f)}{2-\l} + 
 \frac{\pi_{\L_2}^{\eta}(G) \pi_{\L_2}^\eta(f)}{2-\l} + \frac{\pi_{\L_2}^{\eta}(G^c) \pi_{\L_2}^\eta(f)}{1-\l}.
\end{gather*}
Then, using \eqref{blo2}, one has
$$
  \|\pi_{\L_2}^{\cdot}(f)\|_\infty \leq  \left(\frac\d{1-\l}\left(\frac1{2-\l}+1\right) +\frac1{2-\l}\right)\|\pi_{\L_2}^{\cdot}(f)\|_\infty=:\chi(\d,\l)
  \,\|\pi_{\L_2}^{\cdot}(f)\|_\infty .
$$
The latter bound is possible only if: (a) $\pi_{\L_2}^{\eta}(f)\equiv 0$ or (b) $\chi(\d,\l)\geq 1$.
It is easy to exclude (a). Suppose in fact that
$\pi_{\L_2}^{\eta}(f)\equiv 0$. Then also $\pi_{\L_1}(f)=0$ by \eqref{blo2}, and
\eqref{eq:gap1} would reduce to $(1-\l +1_G)f=0$ which is possible iff
$f\equiv 0$ because of the assumption $1-\l>0$.
On the other hand it is straightforward to check that
$ \chi(\d,\l)\geq 1$ implies that %for some large enough constant $C>0$ one has $\l\geq 1-C\sqrt\d$ for all $\d\leq 1/C$. This shows that 
$1-\l=O(\sqrt \d)\leq e^{-(3/2)^{n/4}}$ for all $n$ large enough. 
The proof of the Lemma is complete.
\end{proof}

\subsection{Spectral gap lower bound without the wall} \label{nowall}
The proof of Proposition \ref{propgap} in the case without the wall is very similar. The analogue of 
Proposition \ref{recursive} is described as follows. Given a rectangle $\L=[1,\ell_\L]\times [-h_\L,h_\L]$,
write $\gap(\L)=\gap(\ell_\L,h_\L)$ for the spectral gap of the SOS model in $\L$ with zero boundary conditions at $0$ and $\ell_\L+1$. Set 
$$
\beta(L,n)=\max_{\L\in \bbG_{n,L}}\gap(\L)^{-1}\,,
$$
where $ \bbG_{n,L}$ stands for the set of rectangles $\L=[1,\ell_\L]\times [-h_\L,h_\L]$ such that 
$\ell_\L\leq L$ and $h_\L\leq (3/2)^n$. The decompositions presented in items 
(a)-(b)-(c) in Section \ref{sec:rec} can now be obtained as follows. 
Let $\L_1 = [1,\ell_\L]\times[-h_{\L_1},h_{\L_1}]$ and $\L_2$ be given by the union of two rectangles
$\L_2=\L_2^{b}\cup\L_2^t$, where the bottom rectangle is $\L_2^b = [1,\ell_\L]\times[-h_{\L},-h_{\L}+h_{\L_2}]$ while the top rectangle is $\L_2^t = [1,\ell_\L]\times[h_{\L}-h_{\L_2},h_{\L}]$. Note that if 
$h_{\L_1}+h_{\L_2} > h_\L$ then there are two overlap regions $I^b,I^t$. As in Section \ref{sec:rec}, if $\L\in \bbG_{n,L}$ is such that $h_\L>(3/2)^{n-1}$ then one can find at least $s_n$ decompositions of the form $\L=\L_1\cup\L_2$ where $\L_i$, $i=1,2$ are as above with $h_{\L_1}+h_{\L_2} - h_\L \geq 2\Delta_n$, with $\L_1\in\bbG_{n-1,L}$ and $\L_2^t,\L_2^b\in\bbF_{n-1,L}$, and such that all the overlaps $I=I^t\cup I^b$ corresponding to distinct decompositions are disjoint.
Let $\pi_\L$, $\pi_{\L_1}$ be now the equilibrium distribution in $\L$, $\L_1$ with zero boundary conditions and $\pi_{\L_2}^\eta$ be the distribution in the region $\L_2$ conditioned to the value of $\eta\in\L\setminus\L_2$.   
With this notation, it is not hard to check that the result of Lemma \ref{gap-block} remains true as it stands. Then, the same argument of Section \ref{sec:rec} proves that
$$
\beta(L,n)\leq (1+\d_n)\wt\beta(L,n-1),\quad 
$$
where we define $\wt\beta(L,n):=\max\{\beta(L,n),\gamma(L,n)\}$. Since we know the results for the constants $\gamma(L,n)$ (cf.\ Proposition \ref{recursive}), 
it is now simple to infer the desired conclusion: %on the constants $\wt\beta(n,L)$ as well.
%Therefore, we conclude 
$\wt\beta(L,n)\leq CL^2(\log L)^\alpha$. This is sufficient to end the proof in the case without the wall.

\section*{Acknowledgements}
This work has been carried out while FLT
was visiting the Department of
Mathematics of the University of Roma Tre under the ERC Advanced Research
Grant ``PTRELSS''. FLT acknowledges partial support by  ANR through grant SHEPI.

\end{document}